\definecolor{indigo}{HTML}{492DA5}
\providecommand{\noopsort}[1]{}
\g@addto@macro\bfseries{\boldmath}\makeatother
\let\origsection\section
\renewcommand\section{\@ifstar{\starsection}{\nostarsection}}
\newcommand\sectionspace{\vspace{0.5ex}}
\newcommand\nostarsection[1]{\sectionspace\origsection{#1}\sectionspace}
\newcommand\starsection[1]{\sectionspace\origsection*{#1}\sectionspace}
\setlist[enumerate]{font=\normalfont}
\crefname{enumi}{}{}
\crefname{enumii}{}{}
\crefname{condition}{condition}{conditions}
\newcommand\numberthis{\addtocounter{equation}{1}\tag{\theequation}}
\numberwithin{equation}{section}
\crefname{equation}{equation}{equations}
\newtheorem{theorem}{Theorem}[section]
\newtheorem{thm}[theorem]{Theorem}
\crefname{thm}{Theorem}{Theorems}
\newtheorem{lemma}[theorem]{Lemma}
\crefname{lemma}{Lemma}{Lemmas}
\newtheorem{prop}[theorem]{Proposition}
\crefname{prop}{Proposition}{Propositions}
\newtheorem{cor}[theorem]{Corollary}
\crefname{cor}{Corollary}{Corollaries}
\theoremstyle{definition}
\newtheorem{definition}[theorem]{Definition}
\crefname{definition}{Definition}{Definitions}
\newtheorem{notation}[theorem]{Notation}
\crefname{notation}{Notation}{Notations}
\theoremstyle{remark}
\newtheorem{remark}[theorem]{Remark}
\crefname{remark}{Remark}{Remarks}
\newtheorem{example}[theorem]{Example}
\crefname{example}{Example}{Examples}
\newcommand{\hl}[1]{\textcolor{magenta}{\emph{#1}}}
\newcommand{\C}{\mathbb{C}}
\newcommand{\N}{\mathbb{N}}
\newcommand{\T}{\mathbb{T}}
\newcommand{\EE}{\mathcal{E}}
\newcommand{\GG}{\mathcal{G}}
\newcommand{\HH}{\mathcal{H}}
\newcommand{\II}{\mathcal{I}}
\newcommand{\MM}{\mathcal{M}}
\renewcommand{\SS}{\mathcal{S}}
\newcommand{\UU}{\mathcal{U}}
\newcommand{\XX}{\mathcal{X}}
\renewcommand{\d}{\mathrm{d}}
\newcommand{\EEo}{\EE^{(0)}}
\newcommand{\IE}{\II^\EE}
\newcommand{\GGo}{\GG^{(0)}}
\newcommand{\GGc}{\GG^{(2)}}
\newcommand{\HHo}{\HH^{(0)}}
\newcommand{\IG}{\II^\GG}
\newcommand{\XE}{\XX^\EE}
\newcommand{\XG}{\XX^\GG}
\newcommand{\GK}{\GG_K}
\newcommand{\res}[1]{\operatorname{res_{#1}}}
\newcommand{\vecspan}{\operatorname{span}}
\newcommand{\supp}{\operatorname{supp}}
\newcommand{\osupp}{\operatorname{osupp}}
\newcommand{\ev}{\operatorname{ev}}
\newcommand{\id}{\operatorname{id}}
\newcommand{\Iso}{\operatorname{Iso}}
\newcommand{\lav}{\ensuremath{\lvert}}
\newcommand{\rav}{\ensuremath{\rvert}}
\newcommand{\lv}{\ensuremath{\lVert}}
\newcommand{\rv}{\ensuremath{\rVert}}
\newcommand{\medcap}{\mathbin{\scalebox{1.2}{\ensuremath{\cap}}}}
\newcommand{\medcup}{\mathbin{\scalebox{1.2}{\ensuremath{\cup}}}}
\newcommand{\restr}[1]{\ensuremath{\vert_{#1}}}
\date{\today}
\title[A uniqueness theorem for twisted groupoid C*-algebras]{A uniqueness theorem for twisted groupoid C*-algebras}
\author[Becky Armstrong]{Becky Armstrong}
\address[B.\ Armstrong]{Mathematical Institute, WWU M\"unster, Einsteinstr.\ 62, 48149 M\"unster, GERMANY}
\email{\href{mailto:becky.armstrong@uni-muenster.de}{becky.armstrong@uni-muenster.de}}
\subjclass[2020]{46L05}
\keywords{C*-algebra, groupoid, twist, uniqueness, isotropy}
\thanks{The author would like to thank Nathan Brownlowe and Aidan Sims for many helpful discussions, and for their assistance with preparing and editing this article. The author would also like to thank the anonymous referee for their careful reading and useful suggestions. The author was supported by an Australian Government Research Training Program Stipend Scholarship.}
\begin{document}

\begin{abstract}
We present a uniqueness theorem for the reduced C*-algebra of a twist $\EE$ over a Hausdorff \'etale groupoid $\GG$. We show that the interior $\IE$ of the isotropy of $\EE$ is a twist over the interior $\IG$ of the isotropy of $\GG$, and that the reduced twisted groupoid C*-algebra $C_r^*(\IG;\IE)$ embeds in $C_r^*(\GG;\EE)$. We also investigate the full and reduced twisted C*-algebras of the isotropy groups of $\GG$, and we provide a sufficient condition under which states of (not necessarily unital) C*-algebras have unique state extensions. We use these results to prove our uniqueness theorem, which states that a C*-homomorphism of $C_r^*(\GG;\EE)$ is injective if and only if its restriction to $C_r^*(\IG;\IE)$ is injective. We also show that if $\GG$ is effective, then $C_r^*(\GG;\EE)$ is simple if and only if $\GG$ is minimal.
\end{abstract}

\maketitle

\section{Introduction}

\subsection{Background}

The purpose of this article is prove a uniqueness theorem for the reduced C*-algebra of a twist $\EE$ over a second-countable locally compact Hausdorff \'etale groupoid $\GG$. The study of twisted groupoid C*-algebras was initiated by Renault \cite{Renault1980}, who generalised the construction of twisted group C*-algebras by building full and reduced C*-algebras $C^*(\GG,\sigma)$ and $C_r^*(\GG,\sigma)$ from a second-countable locally compact Hausdorff groupoid $\GG$ admitting a left Haar system and a continuous $2$-cocycle $\sigma$ on $\GG$ taking values in the complex unit circle $\T$. Renault also realised $C^*(\GG,\sigma)$ as a quotient of the C*-algebra associated to the extension of $\GG$ by $\T$ defined by the $2$-cocycle $\sigma$. This construction was subsequently extended by Kumjian \cite{Kumjian1986} to include C*-algebras over groupoid twists that don't necessarily arise from continuous $2$-cocycles.

More recently, Renault \cite{Renault2008} showed that every C*-algebra containing a Cartan subalgebra can be realised as a twisted groupoid C*-algebra, thereby providing a C*-algebraic analogue of Feldman--Moore theory \cite{FM1975, FM1977I, FM1977II}. Renault's reconstruction theorem is of particular importance to the classification program for C*-algebras, given Li's recent article \cite{Li2020} showing that every simple classifiable C*-algebra has a Cartan subalgebra (and is therefore a twisted groupoid C*-algebra), and the work of Barlak and Li \cite{BL2017, BL2020} describing the connections between the UCT problem and Cartan subalgebras in C*-algebras. The increasing interest in twisted groupoid C*-algebras (see, for instance, \cite{ABS2022, AO2020, Boenicke2021, BaH2014, CaH2012, CaHR2013, DGNRW2020, KL2017}) has also recently inspired the introduction of \emph{twisted Steinberg algebras}, which are a purely algebraic analogue of twisted groupoid C*-algebras (see \cite{ACCCLMRSS2021, ACCLMR2022}).

Examples of twisted groupoid C*-algebras include the twisted C*-algebras associated to higher-rank graphs introduced by Kumjian, Pask, and Sims \cite{KPS2012JFA, KPS2013JMAA, KPS2015TAMS, KPS2016JNG}, and the more general class of twisted C*-algebras associated to \emph{topological} higher-rank graphs introduced in the author's PhD thesis \cite{Armstrong2019}.

In this article we prove a uniqueness theorem (\cref{thm: uniqueness theorem}) for the reduced C*-algebra of a twist $\EE$ over a second-countable locally compact Hausdorff \'etale groupoid $\GG$. In particular, we show that the interior $\IE$ of the isotropy of $\EE$ is a twist over the interior $\IG$ of the isotropy of $\GG$, and that a C*-homomorphism of the reduced twisted C*-algebra $C_r^*(\GG;\EE)$ is injective if and only if its restriction to $C_r^*(\IG;\IE)$ is injective. This is an extension of the analogous result \cite[Theorem~3.1(b)]{BNRSW2016} for non-twisted groupoid C*-algebras, and also of the result \cite[Theorem~5.3.14]{Armstrong2019} appearing in the author's PhD thesis for twisted groupoid C*-algebras arising from continuous $2$-cocycles on groupoids. Although many of the arguments used in this article are inspired by their non-twisted counterparts, the twisted setting differs significantly enough from the non-twisted setting to warrant independent treatment. In particular, although $\GG$ is an \'etale groupoid, the twist $\EE$ is not an \'etale groupoid, and this leads to increased technical complexity in many of our proofs. One interesting corollary of our main theorem is that if $\GG$ is effective, then $C_r^*(\GG;\EE)$ is simple if and only if $\GG$ is minimal (see \cref{cor: simplicity}).

\subsection{Outline}

This article is organised as follows. In \cref{sec: prelim} we establish the relevant background and notation, and we recall various well known useful results relating to twists and twisted groupoid C*-algebras. In particular, in \cref{subsec: twists} we recall the definition of a twist $\EE$ over a Hausdorff \'etale groupoid $\GG$, and we show that the interior $\IE$ of the isotropy of $\EE$ is a twist over the interior $\IG$ of the isotropy of $\GG$. In \cref{subsec: twisted groupoid C*s} we recall Kumjian's construction of the full and reduced twisted groupoid C*-algebras $C^*(\GG;\EE)$ and $C_r^*(\GG;\EE)$, and in \cref{prop: C*s of twists and 2-cocycles} we describe the relationship between these C*-algebras and Renault's twisted groupoid C*-algebras arising from continuous $2$-cocycles. This result can be used to translate the results of \cref{sec: twisted isotropy C*s,sec: uniqueness} to the analogous results pertaining to twisted groupoid C*-algebras arising from continuous $2$-cocycles that appear in the author's PhD thesis \cite{Armstrong2019} (see \cref{rem: translations}).

Throughout \cref{sec: uniqueness} we regularly work with twisted C*-algebras associated to the isotropy groups of $\GG$ (which are discrete, since $\GG$ is \'etale), and so in \cref{sec: twisted group C*s} we restrict our attention to twisted C*-algebras associated to discrete groups in order to establish the necessary preliminaries. In particular, we recall the universal property of the full twisted group C*-algebra $C^*(G,\sigma)$ associated to a discrete group $G$ and a $2$-cocycle $\sigma$ on $G$, and in \cref{thm: universal property} we translate this universal property to the language of the associated central extension of $G$ by $\T$.

In \cref{sec: twisted isotropy C*s} we show that the full and reduced twisted C*-algebras of the isotropy groups of $\IG$ are quotients of the full and reduced twisted C*-algebras of the groupoid $\IG$ itself (see \cref{thm: quotient maps}). For the full C*-algebra, we quotient $C^*(\IG;\IE)$ by the ideal generated by functions that vanish on the given isotropy group, but surprisingly, it turns out that this is not the correct ideal to quotient by in the reduced setting. Although the discovery of this fact did not cause us any problems when proving our main theorem, it was somewhat unexpected (at least to the author), and so we provide a proof using an example due to Willett \cite{Willett2015} of a nonamenable groupoid whose full and reduced C*-algebras coincide (see \cref{thm: reduced quotient map not injective}).

A substantial portion of the author's PhD thesis is dedicated to extending well known results of Anderson \cite{Anderson1979} about states of unital C*-algebras to the nonunital setting (see \cite[Section~5.2]{Armstrong2019}). We reproduce some of this material in \cref{sec: states} of the article, as we have been unable to find explicit proofs of these results in the literature, despite them apparently being well known (for instance, they are used in \cite{BNRSW2016}). We apply these results to twisted groupoid C*-algebras in \cref{sec: uniqueness}. The main result of \cref{sec: states} is \cref{thm: unique state extensions}, in which we provide a sufficient \emph{compressibility} condition under which states of (not necessarily unital) C*-algebras have unique state extensions.

In \cref{sec: uniqueness} we observe that there is an embedding $\iota_r$ of $C_r^*(\IG;\IE)$ into $C_r^*(\GG;\EE)$, and that if $\IG$ is closed, then there is a conditional expectation from $C_r^*(\GG;\EE)$ to $\iota_r\big(C_r^*(\IG;\IE)\big)$ extending restriction of functions. We also show that these results hold for the full C*-algebras when $\IG$ is amenable. We then present our main theorem (\cref{thm: uniqueness theorem}), which states that a C*-homomorphism $\Psi$ of $C_r^*(\GG;\EE)$ is injective if and only if the homomorphism $\Psi \circ \iota_r$ of $C_r^*(\IG;\IE)$ is injective. We use this theorem to prove \cref{cor: simplicity}, which states that if $\GG$ is effective, then $C_r^*(\GG;\EE)$ is simple if and only if $\GG$ is minimal. The uniqueness theorem also has potential applications to the study of the ideal structure of twisted C*-algebras associated to Hausdorff \'etale groupoids, and in fact has already been used by the author, Brownlowe, and Sims in \cite{ABS2022} to characterise simplicity of twisted C*-algebras associated to Deaconu--Renault groupoids.

\section{Preliminaries}
\label{sec: prelim}

In this section we present the necessary background on twists over Hausdorff \'etale groupoids and the associated (full and reduced) twisted groupoid C*-algebras. Although groupoid C*-algebras were introduced by Renault in \cite{Renault1980}, we will frequently reference Sims' treatise \cite{Sims2020} on Hausdorff \'etale groupoids and their C*-algebras instead, as it aligns more closely with our setting. The results in this section are presumably well known, but we have presented proofs wherever we have been unable to find them in the literature, or whenever we have felt the need to expand on the level of detail given in existing literature. We begin by recalling some preliminaries on groupoids from \cite[Chapter~8]{Sims2020}.

Throughout this article, $\GG$ will denote a second-countable locally compact Hausdorff groupoid with unit space $\GGo$, which is \hl{\'etale} in the sense that the range and source maps $r, s\colon \GG \to \GGo$ are local homeomorphisms. We refer to such a groupoid as a \hl{Hausdorff \'etale groupoid}, and we denote the set of composable pairs in $\GG$ by $\GGc$. If $\GG$ is \'etale, then $\GG$ admits a Haar system, $\GGo$ is an open subset of $\GG$, and the range, source, and multiplication maps are all open. We call a subset $B$ of $\GG$ a \hl{bisection} if there is an open subset $U$ of $\GG$ containing $B$ such that $r\restr{U}$ and $s\restr{U}$ are homeomorphisms onto open subsets of $\GGo$. Every Hausdorff \'etale groupoid has a (countable) basis of open bisections. Given subsets $A, B \subseteq \GG$, we write $AB \coloneqq \{ \alpha\beta : (\alpha, \beta) \in (A \times B) \cap \GGc \}$ and $A^{-1} \coloneqq \{ \alpha^{-1} : \alpha \in A \}$, and for $\gamma \in \GG$, we write $\gamma A \coloneqq \{\gamma\} A$ and $A \gamma \coloneqq A \{\gamma\}$. Given $u \in \GGo$, we define $\GG_u \coloneqq s^{-1}(u)$, $\GG^u \coloneqq r^{-1}(u)$, and $\GG_u^u \coloneqq \GG_u \cap \GG^u$.\footnote{We acknowledge that this notation is in fact redundant, because using the previously defined notation, we have $\GG_u = \GG u$, $\GG^u = u \GG$, and $\GG_u^u = u \GG u$, for each $u \in \GGo$. However, although the notation that omits the subscripts and superscripts is commonly used in the literature and is arguably more intuitive, we choose not to use it here because we feel that expressions for C*-algebras such as $C_r^*\big(q(u) \GG q(u); u \EE u\big)$ look significantly cleaner when written in the form $C_r^*\big(\GG_{q(u)}^{q(u)}; \EE_u^u\big)$.} For each $u \in \GGo$, the relative topology on $\GG_u$, $\GG^u$, and $\GG_u^u$ is discrete, and $\GG_u^u$ is a countable closed subgroup of $\GG$, called an \hl{isotropy group}. The \hl{isotropy subgroupoid} of $\GG$ is the groupoid $\Iso(\GG) \coloneqq \medcup_{u \in \GGo} \, \GG_u^u = \{ \gamma \in \GG : r(\gamma) = s(\gamma) \}$. We write $\IG$ for the topological interior of $\Iso(\GG)$, and we note that if $\GG$ is a Hausdorff \'etale groupoid, then so is $\IG$. Since $\GGo$ is open in $\GG$, the unit space of $\IG$ is $\GGo$. For each $u \in \GGo$, $\IG_u = \IG \cap \GG_u^u$ is an isotropy group of $\IG$. We say that $\GG$ is \hl{effective} if $\IG = \GGo$. We call a subset $U$ of $\GGo$ \hl{invariant} if $s(\gamma) \in U \implies r(\gamma) \in U$ for all $\gamma \in \GG$, and we say that $\GG$ is \hl{minimal} if $\GGo$ has no nonempty proper open (or, equivalently, closed) invariant subsets.

\subsection{Twists over Hausdorff \'etale groupoids}
\label{subsec: twists}

Groupoid twists and their associated C*-algebras were introduced by Kumjian \cite{Kumjian1986} and subsequently studied by Renault \cite{Renault2008}; however, for consistency of terminology and notation, we will continue to reference Sims' treatise \cite{Sims2020}. We begin by recalling the definition of a twist from \cite[Definition~11.1.1]{Sims2020}.

\begin{definition} \label{def: twist}
A \hl{twist} $(\EE,i,q)$ over a Hausdorff \'etale groupoid $\GG$ is a sequence
\[
\GGo \times \T \overset{i}{\hookrightarrow} \EE \overset{q}\twoheadrightarrow \GG,
\]
where the groupoid $\GGo \times \T$ is viewed as a trivial group bundle with fibres $\T$, $\EE$ is a locally compact Hausdorff groupoid with unit space $\EEo = i\big(\GGo \times \{1\}\big)$, and the following additional conditions hold.
\begin{enumerate}[label=(\alph*)]
\item The maps $i$ and $q$ are continuous groupoid homomorphisms that restrict to homeomorphisms of unit spaces, and we identify $\EEo$ with $\GGo$ via $q\restr{\EEo}$.
\item The sequence is exact, in the sense that $i\big(\{x\} \times \T\big) = q^{-1}(x)$ for each $x \in \GGo$, $i$ is injective, and $q$ is surjective.
\item \label[condition]{cond: local triv} The groupoid $\EE$ is a locally trivial $\GG$-bundle, in the sense that for each $\alpha \in \GG$, there is an open neighbourhood $U_\alpha \subseteq \GG$ of $\alpha$, and a continuous map $P_\alpha\colon U_\alpha \to \EE$ such that
\begin{enumerate}[label=(\roman*), ref={\labelcref{cond: local triv}(\roman*)}]
\item \label[condition]{cond: cts local section} $q \circ P_\alpha = \id_{U_\alpha}$; and
\item \label[condition]{cond: CLS induces homeo} the map $\phi_{P_\alpha}\colon (\beta,z) \mapsto i(r(\beta),z) \, P_\alpha(\beta)$ is a homeomorphism from $U_\alpha \times \T$ onto $q^{-1}(U_\alpha)$.
\end{enumerate}
\item The image of $i$ is central in $\EE$, in the sense that $i(r(\varepsilon),z) \, \varepsilon = \varepsilon \, i(s(\varepsilon),z)$ for all $\varepsilon \in \EE$ and $z \in \T$.
\end{enumerate}
We sometimes denote a twist $(\EE,i,q)$ over $\GG$ simply by $\EE$. We call a continuous map $P_\alpha\colon U_\alpha \to \EE$ satisfying \cref{cond: cts local section} a \hl{(continuous) local section} for $q$, and we call a collection $(U_\alpha, P_\alpha, \phi_{P_\alpha})_{\alpha \in \GG}$ satisfying \cref{cond: local triv} a \hl{local trivialisation} of $\EE$.
\end{definition}

If $\GG$ is a discrete group, then a twist over $\GG$ as defined above is a central extension of $\GG$. It is well known (see, for instance, \cite[Theorem~IV.3.12]{Brown1982}) that there is a one-to-one correspondence between central extensions of a discrete group and $2$-cocycles on the group. This result does not hold in general for groupoids (see \cite[Section~2]{MW1992} or \cite[Section~3]{Boenicke2021}); however, every continuous $\T$-valued $2$-cocycle on a groupoid $\GG$ does give rise to a twist over $\GG$, as we show in \cref{eg: EE_sigma}. To make sense of this example, we first recall the definition of a groupoid $2$-cocycle.

\begin{definition}
A continuous $\T$-valued \hl{$2$-cocycle} on a topological groupoid $\GG$ is a continuous map $\sigma\colon \GGc \to \T$ satisfying
\begin{enumerate}[label=(\roman*)]
\item $\sigma(\alpha,\beta) \, \sigma(\alpha\beta,\gamma) = \sigma(\alpha,\beta\gamma) \, \sigma(\beta,\gamma)$, for all $\alpha,\beta,\gamma \in \GG$ such that $s(\alpha) = r(\beta)$ and $s(\beta) = r(\gamma)$; and
\item $\sigma(r(\gamma),\gamma) = \sigma(\gamma,s(\gamma)) = 1$, for all $\gamma \in \GG$.
\end{enumerate}
\end{definition}

\begin{example} \label{eg: EE_sigma}
Let $\GG$ be a Hausdorff \'etale groupoid and let $\sigma\colon \GGc \to \T$ be a continuous $2$-cocycle. Let $\EE_\sigma = \GG \times_\sigma \T$ be the set $\GG \times \T$ endowed with the product topology. The formulae
\[
(\alpha,w) (\beta,z) \coloneqq \big(\alpha\beta, \, \sigma(\alpha,\beta)wz\big) \quad \text{ and } \quad (\alpha,w)^{-1} \coloneqq \big(\alpha^{-1}, \, \overline{\sigma(\alpha,\alpha^{-1})}\, \overline{w}\big)
\]
define multiplication and inversion operations on $\EE_\sigma$, under which $\EE_\sigma$ is a locally compact Hausdorff groupoid. Let $i_\sigma\colon \GGo \times \T \to \EE_\sigma$ be the inclusion map and let $q_\sigma\colon \EE_\sigma \to \GG$ be the projection onto the first coordinate. Then $(\EE_\sigma, i_\sigma, q_\sigma)$ is a twist over $\GG$.
\end{example}

A routine argument shows that if $(\EE,i,q)$ is a twist over a Hausdorff \'etale groupoid, then the formulae
\[
z \cdot \varepsilon \coloneqq i(r(\varepsilon),z) \, \varepsilon \quad \text{ and } \quad \varepsilon \cdot z \coloneqq \varepsilon \, i(s(\varepsilon),z)
\]
define continuous free left and right actions of $\T$ on $\EE$. Centrality of the image of $i$ implies that $z \cdot \varepsilon = \varepsilon \cdot z$ for all $z \in \T$ and $\varepsilon \in \EE$. This action has the following additional properties.

\begin{lemma} \label{lemma: T-action properties}
Let $(\EE,i,q)$ be a twist over a Hausdorff \'etale groupoid $\GG$ with local trivialisation $(U_\alpha, P_\alpha, \phi_{P_\alpha})_{\alpha \in \GG}$.
\begin{enumerate}[label=(\alph*)]
\item \label{item: z-action is a homeo of EE} For each fixed $z \in \T$, the map $\varphi_z\colon \varepsilon \mapsto z \cdot \varepsilon$ is a homeomorphism of $\EE$.
\item \label{item: q and action} If $\varepsilon, \zeta \in \EE$ satisfy $q(\varepsilon) = q(\zeta)$, then there is a unique $z \in \T$ such that $\varepsilon = z \cdot \zeta$.
\item \label{item: t_alpha cts} For each $\alpha \in \GG$, there is a unique continuous map $t_\alpha\colon q^{-1}(U_\alpha) \to \T$ such that $\phi_{P_\alpha}^{-1}(\varepsilon) = (q(\varepsilon), t_\alpha(\varepsilon))$ for all $\varepsilon \in q^{-1}(U_\alpha)$.
\end{enumerate}
\end{lemma}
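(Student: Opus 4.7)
The plan is to handle each part by straightforwardly unpacking the twist axioms, all three being essentially direct consequences of the definition.

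For part (a), I would first verify that $\varphi_z$ is continuous: since $\varphi_z(\varepsilon) = i(r(\varepsilon),z)\,\varepsilon$, it is a composition of the continuous range map, the continuous inclusion $i$ (paired with the constant $z$), and groupoid multiplication. I would then check that $\varphi_{\overline z}$ is a two-sided inverse. The point is that $i$ is a groupoid homomorphism, and in $\GGo \times \T$ we have $(r(\varepsilon),\overline z)(r(\varepsilon),z) = (r(\varepsilon),1)$, so $i(r(\varepsilon),\overline z)\,i(r(\varepsilon),z) = i(r(\varepsilon),1)$, which is a unit of $\EE$ (using that $i$ restricts to a homeomorphism of unit spaces). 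This gives $\varphi_{\overline z}\circ \varphi_z = \id_\EE$ and similarly on the other side, so $\varphi_z$ is a homeomorphism with inverse $\varphi_{\overline z}$.

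For part (b), the key move is to consider the element $\varepsilon\,\zeta^{-1}$, which is composable because $q(\varepsilon) = q(\zeta)$ forces $s(\varepsilon) = s(\zeta)$. Since $q$ is a groupoid homomorphism, $q(\varepsilon\,\zeta^{-1}) = q(\varepsilon)\,q(\zeta)^{-1} = r(q(\varepsilon))$, a unit of $\GG$. By exactness (\cref{cond: exactness}), the $q$-fibre over this unit equals $i(\{r(q(\varepsilon))\} \times \T)$, so after identifying $\EEo$ with $\GGo$ via $q\restr{\EEo}$ there exists a $z \in \T$ with $\varepsilon\,\zeta^{-1} = i(r(\varepsilon),z)$. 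Multiplying on the right by $\zeta$ gives $\varepsilon = i(r(\varepsilon),z)\,\zeta = z\cdot \zeta$, and uniqueness of $z$ is immediate from injectivity of $i$.

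For part (c), I would read off $t_\alpha$ from the definition of the local trivialisation. Since $\phi_{P_\alpha}(\beta,z) = i(r(\beta),z)\,P_\alpha(\beta) = z \cdot P_\alpha(\beta)$, applying $q$ and using $q\circ P_\alpha = \id_{U_\alpha}$ together with the fact that the $\T$-action preserves $q$-fibres yields $q(\phi_{P_\alpha}(\beta,z)) = \beta$. Hence if $\phi_{P_\alpha}^{-1}(\varepsilon) = (\beta,z)$ for $\varepsilon \in q^{-1}(U_\alpha)$, then necessarily $\beta = q(\varepsilon)$. I would then define $t_\alpha(\varepsilon)$ to be the second coordinate of $\phi_{P_\alpha}^{-1}(\varepsilon)$; continuity is inherited from continuity of the homeomorphism $\phi_{P_\alpha}^{-1}$, and uniqueness is automatic since $\phi_{P_\alpha}^{-1}$ is a function. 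I do not anticipate any significant obstacles; the only subtle point is the identification $\EEo \cong \GGo$ used implicitly in part (b).
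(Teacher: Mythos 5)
Your proposal is correct and follows essentially the same route as the paper: part (a) is handled via continuity of the $\T$-action together with the inverse $\varphi_{\overline z}$, and part (c) by observing that the first coordinate of $\phi_{P_\alpha}^{-1}(\varepsilon)$ must be $q(\varepsilon)$ and defining $t_\alpha$ as the (continuous) second coordinate, exactly as in the paper. For part (b) the paper simply cites \cite[Lemma~11.1.3]{Sims2020}, and your argument --- that $\varepsilon\zeta^{-1}$ lies over a unit, hence in $i(\GGo\times\T)$ by exactness, with uniqueness from injectivity of $i$ --- is the standard proof of that cited fact, so there is no genuine divergence.
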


\begin{proof}
For part~\cref{item: z-action is a homeo of EE}, fix $z \in \T$. Since the action of $\T$ on $\EE$ is continuous, $\varphi_z$ is a continuous bijection with inverse $\varphi_{\overline{z}}$, and hence $\varphi_z$ is a homeomorphism. Part~\cref{item: q and action} is \cite[Lemma~11.1.3]{Sims2020}. For part~\cref{item: t_alpha cts}, fix $\alpha \in \GG$ and $\varepsilon \in q^{-1}(U_\alpha)$. Since $\phi_{P_\alpha}\colon U_\alpha \times \T \to q^{-1}(U_\alpha)$ is a homeomorphism, there is a unique pair $(\beta_\varepsilon, z_\varepsilon) \in U_\alpha \times \T$ such that $\varepsilon = \phi_{P_\alpha}(\beta_\varepsilon, z_\varepsilon) = z_\varepsilon \cdot P_\alpha(\beta_\varepsilon)$. Since $q \circ P_\alpha = \id_{U_\alpha}$, we have $q(\varepsilon) = \beta_\varepsilon$, and since $\phi_{P_\alpha}$ is a homeomorphism, there is a unique continuous map $t_\alpha\colon q^{-1}(U_\alpha) \to \T$ given by $t_\alpha(\varepsilon) \coloneqq z_\varepsilon$.
\end{proof}

We now show that the continuous local sections of \cref{def: twist}\labelcref{cond: cts local section} can always be chosen to be defined on bisections of $\GG$, and to map units of $\GG$ to units of $\EE$.

\begin{lemma} \label{lemma: CLS bisections and units}
Every twist $(\EE,i,q)$ over a Hausdorff \'etale groupoid $\GG$ has a local trivialisation $(B_\alpha, P_\alpha, \phi_{P_\alpha})_{\alpha \in \GG}$ such that for all $\alpha \in \GG$, $B_\alpha$ is a bisection and $P_\alpha(B_\alpha \cap \GGo) \subseteq \EEo$.
\end{lemma}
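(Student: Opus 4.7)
The plan is to modify an arbitrary local trivialisation $(U_\alpha, P_\alpha^{\text{old}}, \phi_{P_\alpha^{\text{old}}})_{\alpha \in \GG}$ given by \cref{def: twist} by treating the cases $\alpha \in \GGo$ and $\alpha \notin \GGo$ separately. The crucial topological observation is that $\GGo$ is both open in $\GG$ (since $\GG$ is \'etale) and closed in $\GG$ (as the equaliser of the continuous maps $\id_\GG$ and $r$ in the Hausdorff space $\GG$); hence both $\GGo$ and $\GG \setminus \GGo$ inherit bases of open bisections from $\GG$.

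For $\alpha \notin \GGo$, I choose an open bisection $B_\alpha$ with $\alpha \in B_\alpha \subseteq U_\alpha \cap (\GG \setminus \GGo)$ and set $P_\alpha \coloneqq P_\alpha^{\text{old}}\restr{B_\alpha}$. Since $\phi_{P_\alpha^{\text{old}}}$ is a homeomorphism onto $q^{-1}(U_\alpha)$, its restriction $\phi_{P_\alpha}$ to the open subset $B_\alpha \times \T$ is a homeomorphism onto $q^{-1}(B_\alpha)$, and the condition $P_\alpha(B_\alpha \cap \GGo) \subseteq \EEo$ is vacuous since $B_\alpha \cap \GGo = \emptyset$.

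For $\alpha \in \GGo$, I instead choose $B_\alpha$ to be an open subset of $\GGo$ containing $\alpha$; such a $B_\alpha$ is automatically an open bisection since $r$ and $s$ restrict to $\id_{\GGo}$. I then define $P_\alpha\colon B_\alpha \to \EE$ by $P_\alpha(\beta) \coloneqq i(\beta,1)$, which is continuous, lands in $\EEo$ (since $i$ restricts to a homeomorphism of unit spaces by \cref{cond: i and q cts gpd homos}), and satisfies $q \circ P_\alpha = \id_{B_\alpha}$. Using \cref{cond: centrality} and that $i$ is a groupoid homomorphism, one computes $\phi_{P_\alpha}(\beta, z) = i(\beta, z)$ on $B_\alpha \times \T$. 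To verify that this is a homeomorphism onto $q^{-1}(B_\alpha)$, I compare with the original trivialisation: for each $\beta \in B_\alpha$, part~\cref{item: q and action} of \cref{lemma: T-action properties} gives a unique $z(\beta) \in \T$ with $P_\alpha^{\text{old}}(\beta) = z(\beta) \cdot i(\beta, 1)$, and applying part~\cref{item: t_alpha cts} of the same lemma to the element $i(\beta, 1)$ yields continuity of $\beta \mapsto z(\beta)$. A short calculation then expresses $\phi_{P_\alpha}^{-1}$ in terms of $q$, $z$, and the continuous map $t_\alpha$ provided by part~\cref{item: t_alpha cts}, establishing continuity of the inverse.

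The main technical point is this verification in the unit-space case, which essentially amounts to showing that $i$ restricts to a homeomorphism $\GGo \times \T \to q^{-1}(\GGo)$; everything else is straightforward bookkeeping.
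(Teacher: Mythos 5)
Your proposal is correct and follows essentially the same route as the paper: split into the clopen cases $\alpha \in \GGo$ and $\alpha \notin \GGo$, restrict the given trivialisation in the latter, take $P_\alpha = i(\cdot\,,1) = (q\restr{\EEo})^{-1}$ in the former, and verify the homeomorphism condition by comparing with the original trivialisation via the continuous fibre-coordinate map of \cref{lemma: T-action properties}\cref{item: t_alpha cts}. The only cosmetic point is that in the unit-space case you should also arrange $B_\alpha \subseteq U_\alpha$, so that the map $t_\alpha$ you invoke is actually defined on $q^{-1}(B_\alpha)$.
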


\begin{proof}
Let $(\EE,i,q)$ be a twist over a Hausdorff \'etale groupoid $\GG$, and let $(U_\alpha, S_\alpha, \psi_{S_\alpha})_{\alpha \in \GG}$ be a local trivialisation of $(\EE,i,q)$. For each $\alpha \in \GG$, let $D_\alpha$ be an open bisection of $\GG$ such that $\alpha \in D_\alpha \subseteq U_\alpha$, and define
\[
B_\alpha \coloneqq \begin{cases}
D_\alpha \cap \GGo & \text{if } \alpha \in \GGo \\
D_\alpha {\setminus} \GGo & \text{if } \alpha \notin \GGo.
\end{cases}
\]
Since $\GG$ is a Hausdorff \'etale groupoid, $\GGo$ is clopen, and hence each $B_\alpha$ is an open bisection of $\GG$ containing $\alpha$.

There are now two cases to deal with. For the first case, fix $\alpha \in \GG {\setminus} \GGo$. Define $P_\alpha \coloneqq S_\alpha\restr{B_\alpha}$ and $\phi_{P_\alpha} \coloneqq \psi_{S_\alpha}\restr{B_\alpha \times \T}$. It is clear that $P_\alpha$ is a continuous local section for $q$ satisfying $P_\alpha(B_\alpha \cap \GGo) \subseteq \EEo$, and that $\phi_{P_\alpha}$ satisfies \cref{def: twist}\labelcref{cond: CLS induces homeo}. For the second case, fix $x \in \GGo$. Then $B_x \subseteq \GGo$. Define $P_x\colon B_x \to \EE$ by $P_x(y) \coloneqq (q\restr{\EEo})^{-1}(y)$. Then $P_x$ is continuous because $q\restr{\EEo}$ a homeomorphism and the inclusion map $\EEo \hookrightarrow \EE$ is continuous. It is clear that $q \circ P_x = \id_{B_x}$, and so $P_x$ is a continuous local section for $q$ satisfying $P_x(B_x \cap \GGo) \subseteq \EEo$. By \cref{lemma: T-action properties}\cref{item: t_alpha cts}, there is a unique continuous map $t_x\colon q^{-1}(U_x) \to \T$ such that $\psi_{S_x}^{-1}(\varepsilon) = (q(\varepsilon), t_x(\varepsilon))$ for all $\varepsilon \in q^{-1}(U_x)$. In particular, for all $y \in B_x$, we have
\begin{equation} \label{eqn: P_x(y) special case}
\psi_{S_x}^{-1}\big(P_x(y)\big) = \big(q(P_x(y)), \, t_x(P_x(y))\big) = \big(y, t_x(P_x(y))\big).
\end{equation}
Define $f_x\colon B_x \times \T \to B_x \times \T$ by $f_x(y,z) \coloneqq \big(y, z \, t_x(P_x(y))\big)$. Since $t_x$ and $P_x$ are continuous, $f_x$ is a homeomorphism with inverse $f_x^{-1}\colon (y,z) \mapsto \big(y, z \, \overline{t_x(P_x(y))}\big)$. Define $\phi_{P_x} \coloneqq \psi_{S_x} \circ f_x$. Then $\phi_{P_x}$ is a homeomorphism from $B_x \times \T$ onto $q^{-1}(B_x)$. Fix $(y,z) \in B_x \times \T$. Using the definition of $\psi_{S_x}$ and that $i$ is a homomorphism for the second equality and using \cref{eqn: P_x(y) special case} for the final equality, we see that
\begin{align*}
\phi_{P_x}(y,z) = \psi_{S_x}\!\big(y, z \, t_x(P_x(y))\big) &= i(y,z) \, i\big(y, t_x(P_x(y))\big) \, S_x(y) \\
&= i(y,z) \, \psi_{S_x}\!\big(y, t_x(P_x(y))\big) = i(y,z) \, P_x(y).
\end{align*}
Thus we have constructed a local trivialisation $(B_\alpha, P_\alpha, \phi_{P_\alpha})_{\alpha \in \GG}$ for $(\EE,i,q)$ with the desired properties.
\end{proof}

\begin{remark}
In some texts (see, for instance, \cite[Definition~3.1]{Boenicke2021}), the existence of continuous local sections (and of the induced local trivialisation) is omitted from the definition of a twist $(\EE,i,q)$, and instead, $i$ is defined to be a homeomorphism onto the open set $q^{-1}(\GGo)$, and $q$ is defined to be a continuous open surjection. These conditions imply that $q$ admits continuous local sections (see \cite[Proposition~3.4]{Boenicke2021}), and since $i$ has a continuous inverse defined on $q^{-1}(\GGo)$, an argument similar to the one used in the proof of \cite[Proposition~4.8(c)]{ACCLMR2022} shows that these local sections induce a local trivialisation of the twist. Hence the twists of \cite[Definition~3.1]{Boenicke2021} are twists in the sense of \cref{def: twist}. On the other hand, in \cref{lemma: properties of twist maps} we show that, given a twist $(\EE,i,q)$ in the sense of \cref{def: twist}, the map $i$ is a homeomorphism onto the open set $q^{-1}(\GGo)$, and the map $q$ is a continuous open surjection. Hence \cref{def: twist} is in fact equivalent to \cite[Definition~3.1]{Boenicke2021}.
\end{remark}

\begin{lemma} \label{lemma: properties of twist maps}
Let $(\EE,i,q)$ be a twist over a Hausdorff \'etale groupoid $\GG$.
\begin{enumerate}[label=(\alph*)]
\item \label{item: q is an open quotient map} The map $q$ is a continuous open surjection, and $\GG$ has the quotient topology.
\item \label{item: open twist multiplication} The range, source, and multiplication maps on $\EE$ are all open.
\item \label{item: i is a homeo onto open set} The map $i$ is a homeomorphism onto $q^{-1}(\GGo)$, which is an open subset of $\EE$.
\end{enumerate}
\end{lemma}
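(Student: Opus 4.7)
The plan is to derive all three parts from the local trivialisations, reducing topological statements on $\EE$ to corresponding facts on the Hausdorff \'etale groupoid $\GG$ combined with the $\T$-factor. For part~\cref{item: q is an open quotient map}, continuity and surjectivity of $q$ are built into \cref{cond: i and q cts gpd homos,cond: exactness}. For openness, given an open $V \subseteq \EE$ and $\varepsilon \in V$, I would choose a local trivialisation $(U_\alpha, P_\alpha, \phi_{P_\alpha})$ around $\alpha \coloneqq q(\varepsilon)$ and observe that $q \circ \phi_{P_\alpha}\colon U_\alpha \times \T \to U_\alpha$ is projection onto the first factor: since $q$ is a homomorphism with $q \circ i(x,z) = x$ and $q \circ P_\alpha = \id_{U_\alpha}$, we get $q(\phi_{P_\alpha}(\beta,z)) = r(\beta) \, \beta = \beta$. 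Thus on $q^{-1}(U_\alpha)$ the map $q$ is the composite of the homeomorphism $\phi_{P_\alpha}^{-1}$ with the open first-coordinate projection, so $q(V \cap q^{-1}(U_\alpha))$ is open, and covering $V$ yields $q(V)$ open. Continuity, surjectivity, and openness of $q$ together force $\GG$ to carry the quotient topology.

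For part~\cref{item: open twist multiplication}, openness of the range and source maps on $\EE$ follows from the groupoid-homomorphism identities $q \circ r = r \circ q$ and $q \circ s = s \circ q$: given an open $V \subseteq \EE$, $q(r(V)) = r(q(V))$ is open in $\GGo$ because $q$ and the range map on $\GG$ are both open, whence the range of $V$ in $\EEo$ is open under the homeomorphism $q\restr{\EEo}$. For openness of multiplication, I would take an open $W \subseteq \EEc$ and a composable pair $(\varepsilon_1, \varepsilon_2) \in W$, shrink to a product $(V_1 \times V_2) \cap \EEc \subseteq W$ with each $V_j$ contained in $q^{-1}(B_j)$ for an open bisection $B_j \ni q(\varepsilon_j)$ (possible by \cref{lemma: CLS bisections and units}), and transport the situation into $B_1 \times \T$ and $B_2 \times \T$ via $\phi_{P_1}^{-1}$ and $\phi_{P_2}^{-1}$. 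On the bisections $B_j$, the product $B_1 B_2$ is an open bisection and $(\beta_1, \beta_2) \mapsto \beta_1 \beta_2$ is a homeomorphism from the composable pairs in $B_1 \times B_2$ onto $B_1 B_2$. Fixing a local section $P_3$ on $B_1 B_2$ and defining the continuous $\T$-valued function $c$ by $P_1(\beta_1) \, P_2(\beta_2) = c(\beta_1, \beta_2) \cdot P_3(\beta_1 \beta_2)$ (well defined by \cref{lemma: T-action properties}\cref{item: q and action}), centrality of the $\T$-action realises multiplication locally as a composite of homeomorphisms with the open projection $B_1 B_2 \times \T \times \T \to B_1 B_2 \times \T$, so the image of $W$ is open in $\EE$.

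For part~\cref{item: i is a homeo onto open set}, injectivity of $i$ and the equality $i(\GGo \times \T) = q^{-1}(\GGo)$ come from \cref{cond: exactness}, while continuity of $i$ and openness of $q^{-1}(\GGo)$ are immediate from continuity of $q$ and openness of $\GGo$ in $\GG$. It remains to show that $i$ is open. I would apply \cref{lemma: CLS bisections and units} to select, for each $x \in \GGo$, a local trivialisation $(B_x, P_x, \phi_{P_x})$ with $B_x \subseteq \GGo$ and $P_x = (q\restr{\EEo})^{-1}$; then $P_x(y) = i(y,1)$ for $y \in B_x$, so $\phi_{P_x}(y,z) = i(y,z) \, i(y,1) = i(y,z)$, identifying $i\restr{B_x \times \T}$ with the homeomorphism $\phi_{P_x}$. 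Hence $i$ is a local homeomorphism, and therefore open. The principal obstacle is the openness of multiplication in part~\cref{item: open twist multiplication}, where two-factor local-trivialisation bookkeeping and the cocycle-style comparison $c$ must be combined with centrality of the $\T$-action to produce a genuinely open map.
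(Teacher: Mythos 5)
Your proof is correct, but it takes a genuinely different route from the paper's in parts~\cref{item: q is an open quotient map} and~\cref{item: open twist multiplication}. For~\cref{item: q is an open quotient map} you prove openness of $q$ first---by observing that under $\phi_{P_\alpha}$ the map $q$ becomes the first-coordinate projection on $U_\alpha \times \T$---and then obtain the quotient topology for free, since a continuous open surjection is automatically a quotient map. The paper argues in the opposite order: it first establishes the quotient topology, by noting that $\phi_{P_\alpha}^{-1}$ carries the $\T$-saturated open set $q^{-1}(X) \cap q^{-1}(U_\alpha)$ to a set of the form $V_\alpha \times \T$ with $V_\alpha = X \cap U_\alpha$, and only then deduces openness of $q$ from the identity $q^{-1}(q(Y)) = \T \cdot Y = \medcup_{z \in \T}\, \varphi_z(Y)$. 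Your route is the more economical of the two. For~\cref{item: open twist multiplication} the range and source arguments coincide, but for openness of multiplication the paper simply invokes \cite[Lemma~8.4.11]{Sims2020} (a topological groupoid with open range map has open multiplication), whereas you reprove this by hand via bisections and the local comparison function $c$. Your argument does go through---the one bookkeeping point is that a continuous local section $P_3$ is only guaranteed on a neighbourhood of $q(\varepsilon_1)q(\varepsilon_2)$ rather than on all of $B_1 B_2$, so you should shrink $B_1$ and $B_2$ until $B_1 B_2$ lies in such a neighbourhood, which is harmless since you are arguing pointwise---but it costs you a two-variable computation that the citation avoids, and continuity of $c$ itself rests on \cref{lemma: T-action properties}\cref{item: t_alpha cts} together with continuity of multiplication on $\EE$. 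Part~\cref{item: i is a homeo onto open set} is essentially the paper's argument: both identify $i\restr{B_x \times \T}$ with the homeomorphism $\phi_{P_x}$ using the unit-preserving local trivialisation of \cref{lemma: CLS bisections and units}.
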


\begin{proof}
For part~\cref{item: q is an open quotient map}, note that $q$ is a continuous surjection by \cref{def: twist}. We first show that $\GG$ has the quotient topology. Let $X$ be a subset of $\GG$. If $X$ is open, then $q^{-1}(X)$ is open in $\EE$, because $q$ is continuous. Suppose instead that $q^{-1}(X)$ is open in $\EE$. We must show that $X$ is open in $\GG$. Choose a local trivialisation $(U_\alpha, P_\alpha, \phi_{P_\alpha})_{\alpha \in \GG}$ of $(\EE,i,q)$. Fix $\alpha \in X$. The set $q^{-1}(X) \cap q^{-1}(U_\alpha)$ is an open subset of $q^{-1}(U_\alpha)$ that is closed under the action of $\T$ on $\EE$, and hence its (open) image under $\phi_{P_\alpha}^{-1}$ is of the form $V_\alpha \times \T$, for some open subset $V_\alpha$ of $U_\alpha$. We have
\[
V_\alpha = q\big(\phi_{P_\alpha}(V_\alpha \times \T)\big) = q\big(q^{-1}(X) \cap q^{-1}(U_\alpha)\big) = X \cap U_\alpha \subseteq X,
\]
and so $V_\alpha$ is an open neighbourhood of $\alpha$ contained in $X$. Hence $X$ is open in $\GG$, and $q$ is a quotient map. We now show that $q$ is an open map. Let $Y$ be an open subset of $\EE$. Since $\GG$ has the quotient topology, $q(Y)$ is open in $\GG$ if and only if $q^{-1}(q(Y))$ is open in $\EE$. Recall from \cref{lemma: T-action properties}\cref{item: z-action is a homeo of EE} that for each $z \in \T$, the map $\varphi_z\colon \varepsilon \mapsto z \cdot \varepsilon$ is a homeomorphism of $\EE$, and so $\varphi_z(Y)$ is open. Since $\varepsilon \in q^{-1}(q(Y))$ if and only if $\varepsilon = z \cdot \zeta$ for some $z \in \T$ and $\zeta \in Y$, we have $q^{-1}(q(Y)) = \T \cdot Y = \medcup_{z \in \T} \, \varphi_z(Y)$, which is open.

For part~\cref{item: open twist multiplication}, note that the range and source maps of $\GG$ are open because $\GG$ is \'etale. Since $q$ restricts to a homeomorphism of unit spaces, it follows that the range and source maps of $\EE$ are open, and thus the multiplication map on $\EE$ is open by \cite[Lemma~8.4.11]{Sims2020}.

For part~\cref{item: i is a homeo onto open set}, note that $i(\GGo \times \T) = q^{-1}(\GGo)$ is open in $\EE$ because $\GGo$ is open in $\GG$. To see that $i$ is an open map, let $U \subseteq \GGo$ and $W \subseteq \T$ be open sets, and use \cref{lemma: CLS bisections and units} to find a local trivialisation $(B_\alpha, P_\alpha, \phi_{P_\alpha})_{\alpha \in \GG}$ of $(\EE,i,q)$ such that for all $\alpha \in \GG$, $B_\alpha$ is a bisection of $\GG$ and $P_\alpha(B_\alpha \cap \GGo) \subseteq \EEo$. For each $x \in U$, define $D_x \coloneqq B_x \cap U$, so that $D_x \subseteq \GGo$ and $U = \bigcup_{x \in U} D_x$. Fix $x \in U$. Since $P_x(D_x) \subseteq \EEo$, we have $\phi_{P_x}(D_x \times W) = i(D_x \times W)$. By \cref{def: twist}\labelcref{cond: CLS induces homeo}, $\phi_{P_x}$ is a homeomorphism onto the open set $q^{-1}(B_x)$, and so since $D_x \times W$ is open, it follows that $i(D_x \times W)$ is open in $\EE$. Hence $i(U \times W) = \bigcup_{x \in U} i(D_x \times W)$ is an open subset of $\EE$, and thus $i$ is an open map.
\end{proof}

\begin{definition}
Let $(\EE,i,q)$ be a twist over a Hausdorff \'etale groupoid $\GG$. Any map $P\colon \GG \to \EE$ satisfying $q \circ P = \id_{\GG}$ is called a \hl{(global) section} for $q$.
\end{definition}

The following result shows that there is a one-to-one correspondence between continuous $2$-cocycles on a Hausdorff \'etale groupoid $\GG$ and twists over $\GG$ admitting \emph{continuous} global sections. Note that in general, a twist over a Hausdorff \'etale groupoid need not admit any continuous global sections.

\begin{prop} \label{prop: twists and 2-cocycles}
Let $(\EE,i,q)$ be a twist over a Hausdorff \'etale groupoid $\GG$. Suppose that $P\colon \GG \to \EE$ is a continuous global section for $q$. Then there is a continuous $2$-cocycle $\sigma\colon \GGc \to \T$ such that $P(\alpha) \, P(\beta) = \sigma(\alpha,\beta) \cdot P(\alpha\beta)$ for all $(\alpha,\beta) \in \GGc$. Let $(\EE_\sigma, i_\sigma, q_\sigma)$ be the twist defined in \cref{eg: EE_sigma}. The map $\phi_P\colon \EE_\sigma \to \EE$ given by $\phi_P(\gamma,z) \coloneqq z \cdot P(\gamma)$ defines an isomorphism of twists, in the sense that $\phi_P$ is a topological groupoid isomorphism that makes the diagram
\[
\begin{tikzcd}
{} & {\EE_\sigma} \arrow{dd}[swap]{\phi_P} \arrow{dr}{q_\sigma} & {} \\[-2ex]
{\GGo \times \T} \arrow{ur}{i_\sigma} \arrow{dr}{i} && \GG \\[-2ex]
& \EE \arrow{ur}{q} & {}
\end{tikzcd}
\]
commute. Moreover, there is a continuous global section $S\colon \GG \to \EE$ for $q$ satisfying $S(\GGo) \subseteq \EEo$.
\end{prop}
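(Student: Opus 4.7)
My plan is to first establish the final ``moreover'' clause, since reducing to the case where $P$ sends units of $\GG$ to units of $\EE$ simplifies both the normalisation of $\sigma$ and the commutativity of the left triangle of the diagram. Since $q \circ P = \id_\GG$, exactness forces $P(\GGo) \subseteq q^{-1}(\GGo) = i(\GGo \times \T)$, so by \cref{lemma: properties of twist maps}\cref{item: i is a homeo onto open set} there is a continuous map $t\colon \GGo \to \T$ with $P(x) = i(x, t(x))$. Defining $S\colon \GG \to \EE$ by $S(\gamma) \coloneqq \overline{t(r(\gamma))} \cdot P(\gamma)$ then yields a continuous global section for $q$ satisfying $S(x) = i(x, 1) = x \in \EEo$, which establishes the moreover clause. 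Replacing $P$ by $S$ if necessary, I may assume throughout that $P(\GGo) \subseteq \EEo$.

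Next I would construct $\sigma$. For each $(\alpha, \beta) \in \GGc$ the identity $q(P(\alpha)P(\beta)) = \alpha\beta = q(P(\alpha\beta))$ combined with \cref{lemma: T-action properties}\cref{item: q and action} yields a unique $\sigma(\alpha,\beta) \in \T$ with $P(\alpha)P(\beta) = \sigma(\alpha,\beta) \cdot P(\alpha\beta)$. Continuity of $\sigma$ is seen by rewriting
\[
i(r(\alpha), \sigma(\alpha,\beta)) = P(\alpha) \, P(\beta) \, P(\alpha\beta)^{-1},
\]
whose right-hand side is jointly continuous in $(\alpha,\beta)$ by continuity of $P$ and of the groupoid operations on $\EE$, and then inverting the homeomorphism $i\restr{\GGo \times \T} \to q^{-1}(\GGo)$ from \cref{lemma: properties of twist maps}\cref{item: i is a homeo onto open set} and projecting onto $\T$. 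Associativity $(P(\alpha)P(\beta))P(\gamma) = P(\alpha)(P(\beta)P(\gamma))$ together with uniqueness gives the cocycle identity, and normalisation $\sigma(r(\gamma),\gamma) = \sigma(\gamma,s(\gamma)) = 1$ holds because $P$ now sends units to units.

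Finally, I would verify that $\phi_P\colon \EE_\sigma \to \EE$ is an isomorphism of twists. The homomorphism property follows from a direct calculation using the defining relation for $\sigma$ and centrality of the $\T$-action:
\[
\phi_P\big((\alpha,w)(\beta,z)\big) = \sigma(\alpha,\beta)wz \cdot P(\alpha\beta) = wz \cdot P(\alpha) P(\beta) = (w \cdot P(\alpha))(z \cdot P(\beta)).
\]
Continuity of $\phi_P$ is immediate from continuity of $P$ and the $\T$-action. For bijectivity, \cref{lemma: T-action properties}\cref{item: q and action} assigns to each $\varepsilon \in \EE$ the unique $z_\varepsilon \in \T$ with $\varepsilon = z_\varepsilon \cdot P(q(\varepsilon))$, giving the inverse $\varepsilon \mapsto (q(\varepsilon), z_\varepsilon)$; continuity of this inverse is checked locally by choosing a local trivialisation about $q(\varepsilon)$ and reading off $z_\varepsilon$ via the continuous map $t_\alpha$ of \cref{lemma: T-action properties}\cref{item: t_alpha cts}. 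Commutativity of the two triangles reduces to the identities $\phi_P(i_\sigma(x, z)) = z \cdot x = i(x,z)$ (using $P(x) = x$ for $x \in \GGo$) and $q(\phi_P(\gamma,z)) = q(P(\gamma)) = \gamma = q_\sigma(\gamma,z)$. The main obstacle is the continuity arguments for $\sigma$ and $\phi_P^{-1}$, which rely on the interplay between the $\T$-action, the local trivialisations, and the homeomorphism property of $i$; the bookkeeping for the reduction to $P(\GGo) \subseteq \EEo$ is also a delicate point, since this reduction is what ties the moreover clause to the normalisation of $\sigma$ and the left-triangle commutativity.
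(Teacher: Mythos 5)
Your proposal is correct, and it is considerably more self-contained than the paper's own proof, which handles the construction of $\sigma$ and the isomorphism $\phi_P$ by citing Kumjian (Section~4, Fact~1 and Remark~2) and obtains the normalised section by re-running the argument of \cref{lemma: CLS bisections and units} on the global trivialisation $(\GG, P, \phi_P)$. Your organisation is genuinely different in two respects. First, you prove the ``moreover'' clause up front and use it to reduce to the case $P(\GGo) \subseteq \EEo$; the paper instead modifies $P$ only on the clopen set $\GGo$ via $(q\restr{\EEo})^{-1}$, whereas you rotate $P$ globally by the continuous function $\gamma \mapsto \overline{t(r(\gamma))}$ --- both work, but your version makes visible \emph{why} the normalisation matters: without it one gets $\sigma(r(\gamma),\gamma) = t(r(\gamma))$ rather than $1$, so $\sigma$ would fail the normalisation condition in the paper's definition of a $2$-cocycle, and the left triangle of the diagram would not commute. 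Second, you supply the verifications the paper outsources: uniqueness of $\sigma(\alpha,\beta)$ via \cref{lemma: T-action properties}\cref{item: q and action}, continuity of $\sigma$ by pulling $P(\alpha)\,P(\beta)\,P(\alpha\beta)^{-1}$ back through the homeomorphism of \cref{lemma: properties of twist maps}\cref{item: i is a homeo onto open set}, the cocycle identity from associativity and freeness of the $\T$-action, and continuity of $\phi_P^{-1}$ via the maps $t_\alpha$ of \cref{lemma: T-action properties}\cref{item: t_alpha cts}; all of these are sound. The one caveat worth stating explicitly is that after replacing $P$ by $S$ your displayed identities are established for $S$ rather than for the originally given section; since the statement cannot hold verbatim for a non-normalised $P$ (for the reason you identify), this is the right reading, but the substitution should be flagged as a mild reinterpretation of the statement rather than left implicit.
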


\begin{proof}
By \cite[Section~4, Fact~1]{Kumjian1986}, every continuous global section for $q$ induces a continuous $2$-cocycle satisfying the given formula. It is observed in \cite[Section~4, Remark~2]{Kumjian1986} that the map $\phi_P\colon (\gamma,z) \mapsto z \cdot P(\gamma)$ defines an isomorphism of the twists $(\EE_\sigma, i_\sigma, q_\sigma)$ and $(\EE, i, q)$. (Alternatively, see the proof of the analogous result \cite[Proposition~4.8]{ACCLMR2022} for discrete twists, which holds in our non-discrete setting.) To see that continuous global sections can be chosen to map units to units, observe that $(\GG, P, \phi_P)_{\alpha \in \GG}$ is a local trivialisation of $\EE$, and so we can apply the argument of \cref{lemma: CLS bisections and units} (without replacing $\GG$ by bisections in the local trivialisation).
\end{proof}

The following result and the subsequent corollary will be frequently used throughout the remainder of the article, without necessarily being explicitly referenced. These results allow us to restrict our attention to twists over the unit space, the interior of the isotropy, or the isotropy groups of a Hausdorff \'etale groupoid.

\begin{lemma} \label{lemma: subgroupoid twists}
Let $(\EE,i,q)$ be a twist over a Hausdorff \'etale groupoid $\GG$. Suppose that $\HH$ is an open or closed \'etale subgroupoid of $\GG$. Define $\EE_\HH \coloneqq q^{-1}(\HH)$, $i_\HH \coloneqq i\restr{\HHo \times \T}$, and $q_\HH \coloneqq q\restr{\EE_\HH}$. Then $\HH$ is a Hausdorff \'etale groupoid, and $(\EE_\HH, i_\HH, q_\HH)$ is a twist over $\HH$.
\end{lemma}

\begin{proof}
The argument used in the proof of \cite[Lemma~2.7]{BFPR2021} applies in both the open and closed settings.
\end{proof}

\begin{cor} \label{cor: specific subgroupoid twists}
Let $(\EE,i,q)$ be a twist over a Hausdorff \'etale groupoid $\GG$.
\begin{enumerate}[label=(\alph*)]
\item \label{item: unit space twist} The groupoid $q^{-1}(\GGo)$ is a twist over $\GGo$.
\item \label{item: isotropy subgroupoid twist} The isotropy subgroupoid $\IE$ of $\EE$ is a twist over the isotropy subgroupoid $\IG$ of $\GG$.
\item \label{item: isotropy group twist} For each $u \in \EEo$, the isotropy group $\IE_u$ is a twist over the isotropy group $\IG_{q(u)}$.
\end{enumerate}
\end{cor}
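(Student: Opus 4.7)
The plan is to obtain all three parts as applications of \cref{prop: subgroupoid twists} to suitable subgroupoids. For part~\cref{item: unit space twist}, the unit space $\GGo$ is open in $\GG$ (because $\GG$ is \'etale) and is trivially an \'etale subgroupoid, so \cref{prop: subgroupoid twists} applied with $\HH = \GGo$ immediately gives that $q^{-1}(\GGo)$ is a twist over $\GGo$.

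For part~\cref{item: isotropy subgroupoid twist}, the main task is to establish the identification $\IE = q^{-1}(\IG)$, after which \cref{prop: subgroupoid twists} applied with $\HH = \IG$ (which is open by definition of the topological interior, and \'etale as noted in the preliminaries) supplies the twist structure on $\IE$ over $\IG$. If $\varepsilon \in q^{-1}(\IG)$, I would pick an open neighbourhood $V$ of $q(\varepsilon)$ in $\GG$ with $V \subseteq \Iso(\GG)$; then $q^{-1}(V)$ is an open neighbourhood of $\varepsilon$ in $\EE$, and since $q$ is a groupoid homomorphism that restricts to a homeomorphism of unit spaces, every $\eta \in q^{-1}(V)$ satisfies $r(\eta) = s(\eta)$, so $q^{-1}(V) \subseteq \Iso(\EE)$ and hence $\varepsilon \in \IE$. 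Conversely, if $\varepsilon \in \IE$, I would take an open neighbourhood $W \subseteq \Iso(\EE)$ of $\varepsilon$; by \cref{lemma: properties of twist maps}\cref{item: q is an open quotient map}, $q(W)$ is an open neighbourhood of $q(\varepsilon)$ in $\GG$, and the same observation that $q$ respects range and source forces $q(W) \subseteq \Iso(\GG)$, giving $q(\varepsilon) \in \IG$.

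For part~\cref{item: isotropy group twist}, I would start from the twist $(\IE, i_{\IG}, q_{\IG})$ over $\IG$ produced in part~\cref{item: isotropy subgroupoid twist}. Inside $\IG$, the isotropy group $\IG_{q(u)} = \IG \cap \GG_{q(u)}^{q(u)}$ is closed (by the observation in the preliminaries that each $\GG_v^v$ is a closed subgroup of $\GG$) and carries the discrete topology, so it is a closed \'etale subgroupoid of $\IG$. Applying \cref{prop: subgroupoid twists} to $(\IE, i_{\IG}, q_{\IG})$ with $\HH = \IG_{q(u)}$ produces a twist over $\IG_{q(u)}$, and a direct check using that $q$ restricts to a homeomorphism of unit spaces shows $q_{\IG}^{-1}(\IG_{q(u)}) = \IE \cap \EE_u^u = \IE_u$. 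The only nontrivial step in the plan is the identification $\IE = q^{-1}(\IG)$ in part~\cref{item: isotropy subgroupoid twist}, which depends crucially on the openness of $q$ from \cref{lemma: properties of twist maps}\cref{item: q is an open quotient map}; the remaining parts reduce to bookkeeping once \cref{prop: subgroupoid twists} is in hand.
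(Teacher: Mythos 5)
Your proposal is correct and follows essentially the same route as the paper: all three parts are reduced to \cref{prop: subgroupoid twists}, with the key step being the identification $\IE = q^{-1}(\IG)$ via the facts that $q$ preserves isotropy (since it restricts to a homeomorphism of unit spaces) and that $q$ is continuous and open. The paper phrases the two inclusions at the level of sets ($q^{-1}(\Iso(\GG)) = \Iso(\EE)$ followed by an interior argument) rather than with explicit neighbourhoods, but the content is identical, and your part~\cref{item: isotropy group twist} likewise matches the paper's application of \cref{prop: subgroupoid twists} to the closed discrete subgroupoid $\IG_{q(u)}$ of $\IG$.
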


\begin{proof}
Part~\cref{item: unit space twist} follows immediately from \cref{lemma: subgroupoid twists} since $\GGo$ is open.

For part~\cref{item: isotropy subgroupoid twist}, we will show that $q^{-1}(\IG) = \IE$, because the result then follows immediately from \cref{lemma: subgroupoid twists} since $\IG$ is an open \'etale subgroupoid of $\GG$. We first show that $q^{-1}\big(\!\Iso(\GG)\big) = \Iso(\EE)$. For this, fix $\varepsilon \in \EE$. Since $q\colon \EE \to \GG$ is a groupoid homomorphism that restricts to a homeomorphism of unit spaces, we have
\[
r(q(\varepsilon)) = s(q(\varepsilon)) \iff q(r(\varepsilon)) = q(s(\varepsilon)) \iff r(\varepsilon) = s(\varepsilon),
\]
and so $q(\varepsilon) \in \Iso(\GG)$ if and only if $\varepsilon \in \Iso(\EE)$. Thus $q^{-1}\big(\!\Iso(\GG)\big) = \Iso(\EE)$, as claimed. Since $\IG$ is open and $q$ is continuous, $q^{-1}(\IG)$ is an open subset of $\EE$ contained in $\Iso(\EE)$, and so $q^{-1}(\IG) \subseteq \IE$. Since $\IE$ is open and $q$ is an open map by \cref{lemma: properties of twist maps}\cref{item: q is an open quotient map}, $q(\IE)$ is an open subset of $\GG$ contained in $\Iso(\GG)$, and hence $\IE \subseteq q^{-1}(\IG)$. Therefore, $q^{-1}(\IG) = \IE$.

For part~\cref{item: isotropy group twist}, fix $u \in \EEo$. The proof of part~\cref{item: isotropy subgroupoid twist} implies that $q^{-1}\big(\IG_{q(u)}\big) = \IE_u$, and so the result follows from \cref{lemma: subgroupoid twists} since $\IG_{q(u)}$ is a discrete closed subgroupoid of $\IG$.
\end{proof}

\subsection{Twisted groupoid \texorpdfstring{C*}{C*}-algebras}
\label{subsec: twisted groupoid C*s}

In this section we recall Kumjian's construction (given in \cite{Kumjian1986}) of the full and reduced twisted groupoid C*-algebras associated to a twist over a Hausdorff \'etale groupoid. We also recall Renault's construction (given in \cite{Renault1980}) of the full and reduced twisted groupoid C*-algebras arising from a continuous $2$-cocycle on a Hausdorff \'etale groupoid, and we describe the relationship between these two constructions in \cref{prop: C*s of twists and 2-cocycles}.

Given a locally compact Hausdorff space $X$, we write $C(X)$ for the vector space of continuous complex-valued functions on $X$ under pointwise operations. For each $f \in C(X)$, we define $\osupp(f) \coloneqq f^{-1}(\C {\setminus} \{0\})$, and we write $\supp(f)$ for the closure of $\supp(f)$ in $X$. We define $C_c(X) \coloneqq \{ f \in C(X) : \supp(f) \text{ is compact} \}$, and we write $C_0(X)$ for the collection of continuous functions on $X$ vanishing at infinity, which is the completion of the subspace $C_c(X)$ with respect to the uniform norm $\lv \cdot \rv_\infty$.

Suppose that $(\EE,i,q)$ is a twist over a Hausdorff \'etale groupoid $\GG$. For each $\gamma \in \GG$, the set $q^{-1}(\gamma)$ is homeomorphic to $\T$, since $\EE$ is a locally trivial $\GG$-bundle. Since the Haar measure on $\T$ is rotation-invariant, pulling it back to $q^{-1}(\gamma)$ gives a measure that is independent of our choice of $\varepsilon \in q^{-1}(\gamma)$. For each $u \in \EEo$, we endow $\EE_u$ with a measure $\lambda_u$ that agrees with these pulled back copies of Haar measure on $q^{-1}(\gamma)$ for each $\gamma \in \GG_{q(u)}$, and so each $q^{-1}(\gamma)$ has measure $1$. We define a measure $\lambda^u$ on each $\EE^u$ in a similar fashion. We say that a function $f\colon \EE \to \C$ is \hl{$\T$-equivariant} if $f(z \cdot \varepsilon) = z \, f(\varepsilon)$ for all $z \in \T$ and $\varepsilon \in \EE$. The collection
\[
\Sigma_c(\GG;\EE) \coloneqq \left\{ f \in C_c(\EE) : f \text{ is $\T$-equivariant} \right\}
\]
is a $*$-algebra under pointwise addition and scalar multiplication, multiplication given by the convolution formula
\[
(fg)(\varepsilon) \coloneqq \int_{\EE_{s(\varepsilon)}} f(\varepsilon\zeta^{-1}) \, g(\zeta) \, \d \lambda_{s(\varepsilon)}(\zeta) = \int_{\EE^{r(\varepsilon)}} f(\eta) \, g(\eta^{-1}\varepsilon) \, \d \lambda^{r(\varepsilon)}(\eta),
\]
and involution given by $f^*(\varepsilon) \coloneqq \overline{f(\varepsilon^{-1})}$. Taking $P\colon \GG \to \EE$ to be any (not necessarily continuous) global section for $q$, it follows from the $\T$-equivariance of $f, g \in \Sigma_c(\GG;\EE)$ that for all $\varepsilon \in \EE$,
\begin{equation} \label{eqn: convolution as a finite sum}
(fg)(\varepsilon) = \sum_{\alpha \in \GG_{q(s(\varepsilon))}} f\big(\varepsilon P(\alpha)^{-1}\big) \, g\big(P(\alpha)\big) = \sum_{\beta \in \GG^{q(r(\varepsilon))}} f\big(P(\beta)\big) \, g\big(P(\beta)^{-1} \varepsilon\big).
\end{equation}

\begin{remark}
In \cite[Section~2]{Kumjian1986} Kumjian observes that $\Sigma_c(\GG;\EE)$ can alternatively be regarded as a collection of sections of the complex line bundle $(\C \times \EE)/\T$ over $\GG$, but we will only make use of this description when referencing external results that use it.
\end{remark}

Although $C_c(\GG)$ is spanned by functions supported on open bisections of $\GG$ (see, for instance, \cite[Lemma~9.1.3]{Sims2020}), this is not the case for $\Sigma_c(\GG;\EE)$, because $\EE$ is not \'etale. However, we do have the following similar result.

\begin{lemma} \label{lemma: bisection span}
Let $(\EE,i,q)$ be a twist over a Hausdorff \'etale groupoid $\GG$. Then
\[
\Sigma_c(\GG;\EE) = \vecspan\!\left\{ f \in \Sigma_c(\GG;\EE) : q(\supp(f)) \text{ is a bisection of } \GG \right\}\!.
\]
\end{lemma}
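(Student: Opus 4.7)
The plan is to exploit the fact that any function in $\Sigma_c(\GG;\EE)$ has compact support, whose image under the continuous map $q$ can be covered by a finite collection of open bisections of $\GG$. A partition-of-unity argument \emph{on $\GG$}, pulled back via $q$, will then decompose $f$ into pieces whose supports project into individual bisections. The reason to do the partition of unity downstairs on $\GG$ rather than upstairs on $\EE$ is to preserve $\T$-equivariance: any function of the form $\rho \circ q$ is constant on $\T$-orbits in $\EE$, so multiplying a $\T$-equivariant function by $\rho \circ q$ preserves $\T$-equivariance, whereas an arbitrary cut-off function on $\EE$ would not.

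In detail, first fix $f \in \Sigma_c(\GG;\EE)$. Since $q$ is continuous and $\supp(f)$ is compact, the set $K \coloneqq q(\supp(f))$ is compact in $\GG$. Using that $\GG$ has a basis of open bisections, cover $K$ by finitely many open bisections $B_1,\ldots,B_n$ of $\GG$, and choose a partition of unity $\rho_1,\ldots,\rho_n \in C_c(\GG)$ subordinate to $\{B_1,\ldots,B_n\}$ with $\supp(\rho_k) \subseteq B_k$ and $\sum_{k=1}^n \rho_k \equiv 1$ on $K$. For each $k$, define $f_k \coloneqq (\rho_k \circ q) \cdot f$. Then $f_k \in C_c(\EE)$ since $\supp(f_k) \subseteq \supp(f)$ is closed in a compact set, and $f_k$ is $\T$-equivariant because $\rho_k \circ q$ is $\T$-invariant and $f$ is $\T$-equivariant. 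Moreover, by continuity of $q$ and $\rho_k$, we have $\supp(f_k) \subseteq q^{-1}(\supp(\rho_k))$, so $q(\supp(f_k)) \subseteq \supp(\rho_k) \subseteq B_k$, which is a bisection. Finally, for any $\varepsilon \in \EE$, if $f(\varepsilon) \neq 0$ then $q(\varepsilon) \in K$, so $\sum_k \rho_k(q(\varepsilon)) = 1$, giving $\sum_k f_k(\varepsilon) = f(\varepsilon)$; and if $f(\varepsilon) = 0$, the identity $\sum_k f_k(\varepsilon) = 0 = f(\varepsilon)$ is trivial.

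The only subtle point is verifying that $q(\supp(f_k))$ is indeed contained in a bisection (rather than merely that $q(\osupp(f_k))$ is), which is handled by observing that $q^{-1}(\supp(\rho_k))$ is closed in $\EE$ and contains $\osupp(f_k)$, hence contains its closure $\supp(f_k)$. There is no real obstacle beyond this bookkeeping; the result is essentially the standard partition-of-unity argument adapted to respect the $\T$-action.
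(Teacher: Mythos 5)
Your proposal is correct and follows essentially the same route as the paper: cover the compact set $q(\supp(f))$ by finitely many open bisections, pull back a subordinate partition of unity on $\GG$ through $q$ (which preserves $\T$-equivariance since such functions are constant on $\T$-orbits), and sum the resulting pieces. Note that with the paper's definition a subset of a bisection is again a bisection, so your containment $q(\supp(f_k)) \subseteq B_k$ gives exactly what is needed.
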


\begin{proof}
Fix $f \in \Sigma_c(\GG;\EE)$. Since $q(\supp(f))$ is compact, it can be covered by a finite collection $\{ B_\gamma : \gamma \in F \}$ of open bisections of $\GG$. As in \cite[Remark~2.9]{LPS2014}, let $\{h_\gamma : \gamma \in F\}$ be a partition of unity subordinate to $\{B_\gamma \cap q(\supp(f)) : \gamma \in F\}$. For each $\gamma \in F$, the function $f_\gamma\colon \varepsilon \mapsto f(\varepsilon) \, h_\gamma(q(\varepsilon))$ belongs to $\Sigma_c(\GG;\EE)$, and $q(\supp(f_\gamma)) \subseteq \supp(h_\gamma) \subseteq B_\gamma$. Fix $\gamma \in F$. Since $\sum_{\gamma \in F} h_\gamma = 1$, we have $f = \sum_{\gamma \in F} f_\gamma$, which completes the proof.
\end{proof}

The \hl{full twisted groupoid C*-algebra} associated to the pair $(\GG,\EE)$ is defined to be the completion $C^*(\GG;\EE)$ of $\Sigma_c(\GG;\EE)$ with respect to the \hl{full norm}
\[
\lv f \rv \coloneqq \sup\!\left\{ \lv \pi(f) \rv : \pi \text{ is a $*$-representation of } \Sigma_c(\GG;\EE) \right\}.
\]

For each unit $u \in \EEo$, there is a $*$-representation $\pi_u$ of $\Sigma_c(\GG;\EE)$ on the Hilbert space $L^2(\GG_{q(u)};\EE_u)$ consisting of square-integrable $\T$-equivariant functions on $\EE_u$, which is given by extension of the convolution formula. We call each $\pi_u$ the \hl{regular representation} of $\Sigma_c(\GG;\EE)$ associated to $u$, and we write $\pi_u^\II$ for the regular representation of $\Sigma_c(\IG;\IE)$ associated to $u$. The \hl{reduced twisted groupoid C*-algebra} $C_r^*(\GG;\EE)$ is defined to be the completion of $\Sigma_c(\GG;\EE)$ with respect to the \hl{reduced norm}
\[
\lv f \rv_r \coloneqq \sup\!\left\{ \lv \pi_u(f) \rv : u \in \EEo \right\}.
\]
For all $f \in \Sigma_c(\GG;\EE)$, we have $\lv f \rv_\infty \le \lv f \rv_r \le \lv f \rv$, with equality throughout if $q(\supp(f))$ is a bisection of $\GG$. If $\GG$ is amenable, then the full and reduced norms agree on $\Sigma_c(\GG;\EE)$.

We define
\[
D_0 \coloneqq \left\{ f \in \Sigma_c(\GG;\EE) : q(\supp(f)) \subseteq \GGo \right\} = \left\{ f \in \Sigma_c(\GG;\EE) : \supp(f) \subseteq i(\GGo \times \T) \right\}\!,
\]
and note that there is a $*$-isomorphism $C_c(\GGo) \cong D_0$ mapping $h \in C_c(\GGo)$ to the function $f_h\colon i(x,z) \mapsto z \, h(x)$ (see \cite[Lemma~11.1.9]{Sims2020}). This $*$-isomorphism extends to an isomorphism of $C_0(\GGo)$ to the completion $D_r$ of $D_0$ in $C_r^*(\GG;\EE)$. There is a faithful conditional expectation $\Phi_r\colon C_r^*(\GG;\EE) \to D_r$ that extends restriction of functions in $\Sigma_c(\GG;\EE)$ to the set $q^{-1}(\GGo) = i(\GGo \times \T)$ (see \cite[Proposition~4.3]{Renault2008} and \cite[Proposition~11.1.13]{Sims2020}). We write $\Phi_r^\II$ for the corresponding conditional expectation from $C_r^*(\IG;\IE)$ to $D_r$. There is also a conditional expectation from $C^*(\GG;\EE)$ to the completion of $D_0$ in the full norm that extends restriction of functions from $\Sigma_c(\GG;\EE)$ to $q^{-1}(\GGo)$, but this conditional expectation is not necessarily faithful.

We now show that every $*$-homomorphism of $\Sigma_c(\GG;\EE)$ into a C*-algebra $A$ extends uniquely to a C*-homomorphism of $C^*(\GG;\EE)$ into $A$. This is an extension of the well known result \cite[Lemma~3.3.22]{Armstrong2019} to the setting of C*-algebras of groupoid twists.

\begin{lemma} \label{lemma: *-hom extension to full C*}
Let $(\EE,i,q)$ be a twist over a Hausdorff \'etale groupoid $\GG$. Suppose that $A$ is a C*-algebra and that $\phi\colon \Sigma_c(\GG;\EE) \to A$ is a $*$-homomorphism. Then $\phi$ extends uniquely to a C*-homomorphism $\overline{\phi}\colon C^*(\GG;\EE) \to A$ satisfying $\lv \overline{\phi} \rv = \lv \phi \rv$.
\end{lemma}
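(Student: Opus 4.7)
The plan is the standard ``extend by continuity'' argument, with the one delicate point being to establish that $\phi$ is bounded with respect to the full norm on $\Sigma_c(\GG;\EE)$. To do this, I would fix a faithful nondegenerate representation $\rho\colon A \hookrightarrow B(H)$ (for instance, the universal representation of $A$) and form the composition $\rho \circ \phi\colon \Sigma_c(\GG;\EE) \to B(H)$, which is a $*$-homomorphism into $B(H)$ and hence a $*$-representation of $\Sigma_c(\GG;\EE)$ in the sense used to define the full norm. By the definition of the full norm as a supremum over all such $*$-representations, we obtain $\lv \rho(\phi(f)) \rv \le \lv f \rv$ for every $f \in \Sigma_c(\GG;\EE)$, and since $\rho$ is isometric this gives $\lv \phi(f) \rv_A \le \lv f \rv$.

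Once boundedness is in hand, the rest is routine. Since $\Sigma_c(\GG;\EE)$ is dense in $C^*(\GG;\EE)$ by construction, the bounded linear map $\phi$ extends uniquely by continuity to a bounded linear map $\overline{\phi}\colon C^*(\GG;\EE) \to A$, and uniqueness of this extension is immediate from density. The identities $\overline{\phi}(ab) = \overline{\phi}(a)\,\overline{\phi}(b)$ and $\overline{\phi}(a^*) = \overline{\phi}(a)^*$ hold on the dense subspace $\Sigma_c(\GG;\EE)$ by hypothesis and extend to all of $C^*(\GG;\EE)$ by joint continuity of multiplication and continuity of involution in both $C^*(\GG;\EE)$ and $A$, so $\overline{\phi}$ is a C*-homomorphism. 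For the norm equality, $\lv \overline{\phi} \rv \ge \lv \phi \rv$ since $\overline{\phi}$ agrees with $\phi$ on $\Sigma_c(\GG;\EE)$, while the reverse inequality follows from the general fact that the operator norm of a bounded linear map and of its unique continuous extension to the completion coincide.

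The only genuine subtlety is the first step: one must check that $\rho \circ \phi$ really is a $*$-representation in the sense intended by the definition of $\lv \cdot \rv$, i.e.\ that $*$-homomorphisms from $\Sigma_c(\GG;\EE)$ into $B(H)$ obtained in this way are admissible. This is implicit in the setup of Kumjian's and Renault's constructions, since any such $*$-homomorphism is automatically bounded by the $I$-norm on $\Sigma_c(\GG;\EE)$ (and in particular contributes a finite value to the supremum defining $\lv \cdot \rv$). Beyond this observation, no further obstacle arises; the argument is essentially the same as the non-twisted case \cite[Lemma~3.3.22]{Armstrong2019}, with $\Sigma_c(\GG;\EE)$ playing the role of $C_c(\GG)$.
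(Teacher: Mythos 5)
Your argument is correct and is essentially the paper's: the paper simply defers to \cite[Lemma~3.3.22]{Armstrong2019}, whose proof is exactly this standard "compose with a faithful representation of $A$, invoke the definition of the full norm as a supremum over $*$-representations, then extend by continuity" argument. The one delicate point you flag — that $\rho\circ\phi$ is an admissible representation contributing to the supremum, which with the paper's definition of $\lv\cdot\rv$ is immediate (finiteness of the supremum being part of the construction of $C^*(\GG;\EE)$) — is handled appropriately.
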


\begin{proof}
The result follows by a similar argument to the proof of \cite[Lemma~3.3.22]{Armstrong2019}.
\end{proof}

We now recall Renault's construction of twisted groupoid C*-algebras arising from continuous groupoid $2$-cocycles. For our purposes, it suffices to consider a Hausdorff \'etale groupoid $\GG$, although we note that Renault deals with groupoids that are not necessarily \'etale in \cite{Renault1980}. Suppose that $\sigma\colon \GGc \to \T$ is a continuous $2$-cocycle. We write $C_c(\GG,\sigma)$ for the $*$-algebra consisting of continuous, compactly supported, complex-valued functions on $\GG$ equipped with pointwise addition and scalar multiplication, multiplication given by the twisted convolution formula
\[
(fg)(\gamma) \coloneqq \sum_{\substack{(\alpha,\beta) \in \GGc, \\ \alpha\beta = \gamma}} \sigma(\alpha,\beta) \, f(\alpha) \, g(\beta) = \sum_{\eta \in \GG_{s(\gamma)}} \sigma(\gamma\eta^{-1},\eta) \, f(\gamma\eta^{-1}) \, g(\eta),
\]
and involution given by $f^*(\gamma) \coloneqq \overline{\sigma(\gamma,\gamma^{-1})} \, \overline{f(\gamma^{-1})}$. The full and reduced norms on $C_c(\GG,\sigma)$ are defined in a similar fashion to the full and reduced norms on $\Sigma_c(\GG;\EE)$, and by completing $C_c(\GG,\sigma)$ with respect to these norms, we obtain the full and reduced twisted groupoid C*-algebras $C^*(\GG,\sigma)$ and $C_r^*(\GG,\sigma)$, respectively.

In \cref{subsec: twists} we showed that every continuous $2$-cocycle $\sigma$ on $\GG$ gives rise to a twist $\EE_\sigma$ over $\GG$ by $\T$, and in \cref{prop: twists and 2-cocycles} we showed that every twist $\EE$ admitting a continuous global section gives rise to a continuous $2$-cocycle $\sigma$ such that $\EE_\sigma \cong \EE$. If $\phi\colon \EE_1 \to \EE_2$ is an isomorphism of twists over $\GG$, then a routine argument shows that the map $f \mapsto f \circ \phi$ is an isomorphism from $\Sigma_c(\GG;\EE_2)$ to $\Sigma_c(\GG;\EE_1)$. Thus we can use \cref{prop: twists and 2-cocycles} to describe the relationship between twisted groupoid C*-algebras arising from continuous $2$-cocycles, and those arising from twists admitting continuous global sections, as follows.

\begin{prop} \label{prop: C*s of twists and 2-cocycles}
Let $(\EE,i,q)$ be a twist over a Hausdorff \'etale groupoid $\GG$. Suppose that $P\colon \GG \to \EE$ is a continuous global section for $q$. Let $\sigma\colon \GGc \to \T$ be the continuous $2$-cocycle induced by $P$, as described in \cref{prop: twists and 2-cocycles}. Let $\overline{\sigma}$ denote the continuous $2$-cocycle obtained by composing $\sigma$ with the complex conjugation map on $\T$. There is a $*$-isomorphism $\psi_P\colon \Sigma_c(\GG;\Sigma) \to C_c(\GG,\overline{\sigma})$ given by $\psi_P(f) \coloneqq f \circ P$. This isomorphism extends to C*-isomorphisms $C^*(\GG;\EE) \cong C^*(\GG,\overline{\sigma})$ and $C_r^*(\GG;\EE) \cong C_r^*(\GG,\overline{\sigma})$.
\end{prop}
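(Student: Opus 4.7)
The strategy is to factor $\psi_P$ through the canonical twist $\EE_\sigma$ of \cref{eg: EE_sigma}, using the twist isomorphism $\phi_P\colon \EE_\sigma \to \EE$, $\phi_P(\gamma, z) := z \cdot P(\gamma)$, supplied by \cref{prop: twists and 2-cocycles}. By the last assertion of that proposition, I may assume $P(\GGo) \subseteq \EEo$, so $\sigma$ is normalised in the sense of the paper. The remark preceding the present proposition gives a $*$-isomorphism $\Lambda_P\colon \Sigma_c(\GG;\EE) \to \Sigma_c(\GG;\EE_\sigma)$ defined by $\Lambda_P(f) := f \circ \phi_P$, so it suffices to exhibit a $*$-isomorphism $\Theta\colon \Sigma_c(\GG;\EE_\sigma) \to C_c(\GG, \overline\sigma)$ with $\Theta \circ \Lambda_P = \psi_P$, and then extend to completions.

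I would take $\Theta(g)(\gamma) := g(\gamma, 1)$. The $\T$-equivariance of $g$ forces $g(\gamma, z) = z \, \Theta(g)(\gamma)$, so $\Theta$ is a continuous linear bijection onto $C_c(\GG)$ with inverse $F \mapsto ((\gamma, z) \mapsto z F(\gamma))$. To verify that $\Theta$ is multiplicative with respect to the $\overline\sigma$-twisted convolution on $C_c(\GG, \overline\sigma)$, I would apply the finite-sum formula \cref{eqn: convolution as a finite sum} in $\Sigma_c(\GG;\EE_\sigma)$ with the section $R(\gamma) := (\gamma, 1)$, and expand $R(\gamma) R(\alpha)^{-1}$ using the multiplication and inversion formulas from \cref{eg: EE_sigma}. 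Two uses of the cocycle identity, together with normalisation, produce the key relation $\sigma(\gamma, \alpha^{-1}) \overline{\sigma(\alpha, \alpha^{-1})} = \overline{\sigma(\gamma\alpha^{-1}, \alpha)}$, which is precisely what is needed to exhibit $\overline\sigma$ (rather than $\sigma$) as the twisting cocycle on the $C_c(\GG)$ side; a parallel, shorter calculation handles the involutions. Unwinding the definitions then gives $(\Theta \circ \Lambda_P)(f)(\gamma) = f(1 \cdot P(\gamma)) = \psi_P(f)(\gamma)$, so $\psi_P$ is a $*$-algebra isomorphism.

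For the full C*-algebras I would apply \cref{lemma: *-hom extension to full C*} to $\psi_P$ and to $\psi_P^{-1}$ (each post-composed with the inclusion into the relevant full completion) to obtain C*-homomorphisms whose restrictions to the dense $*$-subalgebras are mutually inverse, hence mutually inverse C*-isomorphisms. For the reduced C*-algebras, for each $u \in \EEo$ I would choose $P$ with $P(q(u)) = u$ and define $U_u\colon L^2(\GG_{q(u)};\EE_u) \to \ell^2(\GG_{q(u)})$ by $U_u(\xi)(\gamma) := \xi(P(\gamma))$. This $U_u$ is unitary because $\lv \xi(z \cdot P(\gamma)) \rv = \lv \xi(P(\gamma)) \rv$ by $\T$-equivariance and the measure $\lambda_u$ assigns each fibre $q^{-1}(\gamma)$ mass~$1$. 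A convolution computation essentially mirroring the one used for $\Theta$ shows that $U_u$ intertwines the regular representation $\pi_u$ of $\Sigma_c(\GG;\EE)$ with the regular representation of $C_c(\GG, \overline\sigma)$ at $q(u)$ pulled back via $\psi_P^{-1}$. Taking suprema over $u \in \EEo$ gives $\lv \psi_P(f) \rv_r = \lv f \rv_r$, and $\psi_P$ extends to a C*-isomorphism $C_r^*(\GG;\EE) \cong C_r^*(\GG, \overline\sigma)$.

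The principal technical obstacle is the bookkeeping in the identity used to define $\Theta$: one must carefully track the conjugates of $\sigma$ arising from inversion in $\EE_\sigma$ and from $\T$-equivariance in order to see that the twisting cocycle on $C_c(\GG)$ comes out as $\overline\sigma$ and not $\sigma$. Once this identity is established, the remainder is either a universal-property argument for the full norm or an explicit construction of intertwining unitaries for the reduced norm.
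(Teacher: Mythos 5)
Your proposal is correct, and it opens with the same reduction the paper uses: pass to the model twist $\EE_\sigma$ of \cref{eg: EE_sigma} via the isomorphism $\phi_P$ supplied by \cref{prop: twists and 2-cocycles}. The difference is in what happens next: the paper disposes of the remaining step --- identifying $\Sigma_c(\GG;\EE_\sigma)$ with $C_c(\GG,\overline{\sigma})$ and extending to the full and reduced completions --- by citing the ``$n=-1$'' case of \cite[Lemma~3.1(b)]{BaH2014}, whereas you prove that step directly. Your computations check out: the key relation $\sigma(\gamma,\alpha^{-1})\,\overline{\sigma(\alpha,\alpha^{-1})} = \overline{\sigma(\gamma\alpha^{-1},\alpha)}$ follows from the cocycle identity applied to the composable triple $(\gamma\alpha^{-1},\alpha,\alpha^{-1})$ together with the normalisation $\sigma(\eta,s(\eta))=1$, and it is exactly what turns the finite-sum convolution of \cref{eqn: convolution as a finite sum} into the $\overline{\sigma}$-twisted convolution (a matching computation gives the involution); the two-sided application of \cref{lemma: *-hom extension to full C*} correctly yields mutually inverse maps on the full completions; and the unitaries $U_u$ are well defined precisely because each fibre $q^{-1}(\gamma)$ has $\lambda_u$-measure $1$. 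Two small remarks. First, the normalising assumption $P(\GGo)\subseteq\EEo$ is not merely convenient but necessary for the induced $\sigma$ to satisfy $\sigma(r(\gamma),\gamma)=\sigma(\gamma,s(\gamma))=1$, so you are right to invoke the last assertion of \cref{prop: twists and 2-cocycles} before starting. Second, you should not re-choose $P$ for each $u\in\EEo$ (that would change $\sigma$ and hence $\psi_P$); fortunately this is unnecessary, since with $P(\GGo)\subseteq\EEo$ the condition $P(q(u))=u$ holds automatically under the identification of $\EEo$ with $\GGo$, and in any case $U_u$ is unitary for any fixed global section. The net effect is that your argument is a correct, self-contained substitute for the external citation.
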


\begin{proof}
Let $(\EE_\sigma, i_\sigma, q_\sigma)$ be the twist defined in \cref{eg: EE_sigma}. Since $\EE \cong \EE_\sigma$ by \cref{prop: twists and 2-cocycles}, and since $(\gamma,1) \mapsto \gamma$ is a continuous global section for $q_\sigma$, the result follows from the ``$n = -1$'' case of \cite[Lemma~3.1(b)]{BaH2014}.
\end{proof}

\begin{remark} \label{rem: translations}
Using \cref{prop: C*s of twists and 2-cocycles}, the results in \cref{sec: twisted isotropy C*s,sec: uniqueness} can be translated into analogous results for twisted groupoid C*-algebras arising from continuous $2$-cocycles. Alternatively, we refer the reader to \cite[Chapter~5]{Armstrong2019} for the corresponding results written explicitly in this framework.
\end{remark}

\section{A universal property for twisted group \texorpdfstring{C*}{C*}-algebras}
\label{sec: twisted group C*s}

In this section, we describe twisted C*-algebras of countable discrete groups. Since every such group is a Hausdorff \'etale groupoid, the preliminaries given in \cref{sec: prelim} are all applicable here. In particular, since every twist over a discrete group admits a (trivially continuous) global section, \cref{prop: twists and 2-cocycles} applies to twists over countable discrete groups. Thus, a twisted group C*-algebra can be viewed as arising either from a continuous $\T$-valued $2$-cocycle on the group, or from a central extension of the group by $\T$. The main purpose of this section is to translate the universal property for the full twisted C*-algebra $C^*(G,\sigma)$ associated to a countable discrete group $G$ and a $\T$-valued $2$-cocycle $\sigma$ on $G$ into the language of the associated central extension $G \times_\sigma \T$ of $G$ by $\T$ (see \cref{thm: universal property}). This result is presumably well known, but we were unable to find an explicit statement of it in the literature.

Suppose that $G$ is a countable discrete group with identity $e$, and that $\sigma\colon G \times G \to \T$ is a $2$-cocycle. Given a C*-algebra $A$ with identity $1_A$, we say that $u\colon G \to A$ is a \hl{$\sigma$-twisted unitary representation} of $G$ in $A$ if each $u_g$ is a unitary element of $A$, and $u_g u_h = \sigma(g,h) \, u_{gh}$ for all $g, h \in G$. This implies that $u_e = 1_A$, and $u_g^* = \overline{\sigma(g,g^{-1})} \, u_{g^{-1}}$ for each $g \in G$. The map $\delta\colon G \to C^*(G,\sigma)$ sending each $g \in G$ to the point-mass function $\delta_g$ at $g$ is a $\sigma$-twisted unitary representation of $G$ such that $C^*(G,\sigma) = C^*\big(\{\delta_g : g \in G \}\big)$, and the following universal property holds: if $B$ is a unital C*-algebra and $u\colon G \to B$ is a $\sigma$-twisted unitary representation of $G$ in $B$, then there is a homomorphism $\lambda_u\colon C^*(G,\sigma) \to B$ such that $\lambda_u(\delta_g) = u_g$ for each $g \in G$.

\begin{prop} \label{prop: delta^T_varepsilon}
Let $(E,i,q)$ be a twist over a countable discrete group $G$. Fix $\varepsilon \in E$, and define $\delta^\T_\varepsilon\colon E \to \C$ by
\[
\delta^\T_\varepsilon(\zeta) \coloneqq \begin{cases}
w \ & \text{if } \zeta = w \cdot \varepsilon \text{ for some } w \in \T \\
0 \ & \text{if } \zeta \ne w \cdot \varepsilon \text{ for all } w \in \T.
\end{cases}
\]
Then $\Sigma_c(G;E) = \vecspan\!\left\{ \delta^\T_\varepsilon : \varepsilon \in E \right\}$, and $C_r^*(G;E)$ and $C^*(G;E)$ are both unital with identity $\delta^\T_e$, where $e$ is the identity of $E$. For all $\varepsilon, \zeta \in E$ and $z \in \T$, $\delta^\T_\varepsilon$ is a unitary element of $\Sigma_c(G;E)$ satisfying $(\delta^\T_\varepsilon)^* = \delta^\T_{\varepsilon^{-1}}$, and we have $\delta^\T_{z \cdot \varepsilon} = \overline{z} \, \delta^\T_\varepsilon$ and $\delta^\T_\varepsilon \delta^\T_\zeta = \delta^\T_{\varepsilon\zeta}$.
\end{prop}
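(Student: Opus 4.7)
The plan rests on a single structural observation: since $G$ is discrete, the twist $E$ decomposes as a disjoint union of clopen fibres $q^{-1}(g)$ for $g \in G$, each of which is a $\T$-torsor homeomorphic to $\T$. Thus $\delta^\T_\varepsilon$ is automatically continuous (it vanishes outside the clopen fibre $q^{-1}(q(\varepsilon))$ and acts as a rotation on that fibre), has compact support $\T \cdot \varepsilon$, and is $\T$-equivariant by construction, so $\delta^\T_\varepsilon \in \Sigma_c(G;E)$. The relation $\delta^\T_{z \cdot \varepsilon} = \overline{z} \, \delta^\T_\varepsilon$ is a one-line check on the orbit $\T \cdot \varepsilon$.

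For the spanning claim, I would take $f \in \Sigma_c(G;E)$ and observe that $q(\supp(f))$ is a compact subset of the discrete group $G$, hence finite; choosing one representative $\varepsilon_g$ in each nonempty fibre and using $\T$-equivariance of $f$ on each orbit gives $f = \sum_g f(\varepsilon_g) \, \delta^\T_{\varepsilon_g}$.

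The core computation is the multiplication identity $\delta^\T_\varepsilon \delta^\T_\zeta = \delta^\T_{\varepsilon\zeta}$. I would apply the finite-sum form of convolution in \cref{eqn: convolution as a finite sum} using any (set-theoretic) section $P\colon G \to E$. The factor $\delta^\T_\zeta(P(\alpha))$ is nonzero only when $\alpha = q(\zeta)$, so the sum collapses to a single term. Writing $P(q(\zeta)) = w_0 \cdot \zeta$ and using centrality of the $\T$-action together with the identity $(w \cdot \varepsilon)^{-1} = \overline{w} \cdot \varepsilon^{-1}$, I would rewrite $\eta \, P(q(\zeta))^{-1} = \overline{w_0} \cdot \eta\zeta^{-1}$, then evaluate: the product vanishes unless $q(\eta) = q(\varepsilon\zeta)$, and for $\eta = w' \cdot \varepsilon\zeta$ the two rotation phases $w_0$ and $\overline{w_0}$ cancel, producing $w' = \delta^\T_{\varepsilon\zeta}(\eta)$. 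The involution formula $(\delta^\T_\varepsilon)^* = \delta^\T_{\varepsilon^{-1}}$ is a direct computation from $f^*(\zeta) = \overline{f(\zeta^{-1})}$, again using $(w \cdot \varepsilon)^{-1} = \overline{w} \cdot \varepsilon^{-1}$.

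The remaining claims then follow formally. Because $G$ is a group, $G^{(0)} = \{e_G\}$ and hence $E^{(0)} = \{e\}$ is a single point; every $\varepsilon \in E$ satisfies $e \varepsilon = \varepsilon = \varepsilon e$, so the multiplication identity gives $\delta^\T_e \delta^\T_\varepsilon = \delta^\T_\varepsilon = \delta^\T_\varepsilon \delta^\T_e$, making $\delta^\T_e$ the identity of $\Sigma_c(G;E)$ and hence (by continuity of multiplication) of both $C^*(G;E)$ and $C_r^*(G;E)$. Unitarity of $\delta^\T_\varepsilon$ follows from $\delta^\T_\varepsilon (\delta^\T_\varepsilon)^* = \delta^\T_\varepsilon \delta^\T_{\varepsilon^{-1}} = \delta^\T_{\varepsilon\varepsilon^{-1}} = \delta^\T_e$ and the symmetric computation. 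The main obstacle is the convolution identity: although only one term of the sum survives, one must track the auxiliary phase $w_0$ introduced by the arbitrary section $P$ and verify that it cancels against the $\overline{w_0}$ produced by inverting $P(q(\zeta))$, which requires careful use of centrality of the $\T$-action on inverses.
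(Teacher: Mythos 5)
Your proposal is correct and follows essentially the same route as the paper: continuity and compact support via the clopen $\T$-torsor fibres, the spanning claim via finiteness of $q(\supp(f))$ and $\T$-equivariance, and the algebraic identities by collapsing the convolution to a single surviving term. The only cosmetic difference is that you evaluate the convolution via the finite-sum formula with an arbitrary section (tracking and cancelling the phase $w_0$), whereas the paper parametrises the fibre integral directly by $\eta = w\cdot\zeta$ so the phases cancel as $\overline{w}\,w$ inside the integral; both computations are equivalent.
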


\begin{proof}
By regarding $\Sigma_c(G;E)$ as sections of a line bundle (as in \cite[Section~2]{BFPR2021}) and by performing some routine calculations, it can be seen that the material following \cite[Remark~2.5]{BFPR2021} implies the result.
\end{proof}

In the following theorem, we use the universal property of $C^*(G,\overline{\sigma})$ to give a universal property for $C^*(G;E)$.

\begin{thm} \label{thm: universal property}
Let $(E,i,q)$ be a twist over a countable discrete group $G$. Suppose that $\{ t_\varepsilon : \varepsilon \in E \}$ is a family of unitary elements of a unital C*-algebra $A$ such that $t_{z \cdot \varepsilon} = \overline{z} \, t_\varepsilon$ and $t_\varepsilon \, t_\zeta = t_{\varepsilon\zeta}$ for all $\varepsilon, \zeta \in E$ and $z \in \T$. Then there is a homomorphism $\pi_t\colon C^*(G;E) \to A$ satisfying $\pi_t(\delta^\T_\varepsilon) = t_\varepsilon$ for each $\varepsilon \in E$, where $\delta^\T_\varepsilon$ is defined as in \cref{prop: delta^T_varepsilon}.
\end{thm}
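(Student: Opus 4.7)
The plan is to reduce to the universal property of $C^*(G,\overline{\sigma})$ via \cref{prop: C*s of twists and 2-cocycles}. First I would use \cref{prop: twists and 2-cocycles} (specifically the ``moreover'' clause) to fix a section $P\colon G \to E$ for $q$ sending the identity of $G$ to the identity of $E$; continuity is automatic since $G$ is discrete. Let $\sigma\colon G \times G \to \T$ be the induced $2$-cocycle, so that $P(g)P(h) = \sigma(g,h) \cdot P(gh)$ for all $g,h \in G$.

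Next I would set $u_g \coloneqq t_{P(g)}$ and verify that $u$ is a $\overline{\sigma}$-twisted unitary representation of $G$ in $A$. Unitarity of each $u_g$ is immediate from the hypothesis. Multiplicativity follows from
\[
u_g u_h = t_{P(g)} \, t_{P(h)} = t_{P(g)P(h)} = t_{\sigma(g,h) \cdot P(gh)} = \overline{\sigma(g,h)} \, u_{gh},
\]
using $t_\varepsilon t_\zeta = t_{\varepsilon\zeta}$ and then $t_{z \cdot \varepsilon} = \overline{z}\, t_\varepsilon$. Finally, $t_{e_E} t_{e_E} = t_{e_E}$ combined with unitarity of $t_{e_E}$ forces $u_{e_G} = t_{e_E} = 1_A$.

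The universal property of $C^*(G,\overline{\sigma})$ then supplies a C*-homomorphism $\lambda_u\colon C^*(G,\overline{\sigma}) \to A$ with $\lambda_u(\delta_g) = u_g$. I would define $\pi_t \coloneqq \lambda_u \circ \psi_P$, where $\psi_P\colon C^*(G;E) \to C^*(G,\overline{\sigma})$ is the C*-isomorphism from \cref{prop: C*s of twists and 2-cocycles}. To check $\pi_t(\delta^\T_\varepsilon) = t_\varepsilon$, fix $\varepsilon \in E$ and let $z_\varepsilon \in \T$ be the unique scalar with $\varepsilon = z_\varepsilon \cdot P(q(\varepsilon))$, which exists by \cref{lemma: T-action properties}\cref{item: q and action}. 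A short direct computation from the definition of $\delta^\T_\varepsilon$ gives $\psi_P(\delta^\T_\varepsilon) = (\delta^\T_\varepsilon \circ P) = \overline{z_\varepsilon}\, \delta_{q(\varepsilon)}$ in $C_c(G,\overline{\sigma})$. Applying $\lambda_u$ and using the equivariance hypothesis in reverse yields
\[
\pi_t(\delta^\T_\varepsilon) = \overline{z_\varepsilon}\, u_{q(\varepsilon)} = \overline{z_\varepsilon}\, t_{P(q(\varepsilon))} = t_{z_\varepsilon \cdot P(q(\varepsilon))} = t_\varepsilon.
\]

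The main obstacle is purely bookkeeping. The two conditions imposed on the family $\{t_\varepsilon\}$ are tailored so that (i) the collection $\{t_{P(g)}\}$ is a $\overline{\sigma}$-twisted unitary representation, and (ii) the $\T$-scaling introduced by $\psi_P$ on $\delta^\T_\varepsilon$ is precisely undone by the $\T$-equivariance of $t$. Beyond tracking these scalars, the argument is an appeal to the universal property and isomorphism already established in \cref{sec: twisted group C*s,subsec: twisted groupoid C*s}, so no deeper analytic input is needed.
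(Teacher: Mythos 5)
Your proposal is correct and follows essentially the same route as the paper's proof: pass to a section $P$, check that $u_g \coloneqq t_{P(g)}$ is a $\overline{\sigma}$-twisted unitary representation, invoke the universal property of $C^*(G,\overline{\sigma})$, and compose with $\psi_P$, using the identity $\psi_P(\delta^\T_\varepsilon) = \overline{z_\varepsilon}\,\delta_{q(\varepsilon)}$ (which the paper isolates as \cref{lemma: generators of twisted group C*s}) to verify $\pi_t(\delta^\T_\varepsilon) = t_\varepsilon$. The only cosmetic differences are your extra (harmless, unneeded) normalisation $P(e_G) = e_E$ and your inlining of that lemma's computation.
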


\begin{proof}
Recall from \cref{prop: twists and 2-cocycles,prop: C*s of twists and 2-cocycles} that there is a $2$-cocycle $\sigma\colon G \times G \to \T$ such that $E \cong G \times_\sigma \T$ and $C^*(G;E) \cong C^*(G,\overline{\sigma})$. Let $e$ be the identity of $G$, and let $1_A$ be the identity of $A$. By \cite[Example~2.8.14]{Echterhoff2017}, there is a one-to-one correspondence between $\sigma$-twisted unitary representations of $G$ in $A$ and unitary representations $t$ of $G \times_\sigma \T$ in $A$ that satisfy $t_{(e,z)} = \overline{z} \, 1_A$ for all $z \in \T$. Such representations satisfy $t_{z \cdot \varepsilon} = \overline{z} \, t_\varepsilon$ and $t_\varepsilon \, t_\zeta = t_{\varepsilon\zeta}$ for all $\varepsilon, \zeta \in E$. Thus the result follows from the universal property of $C^*(G,\overline{\sigma})$.
\end{proof}

\section{Twisted \texorpdfstring{C*}{C*}-algebras associated to the interior of the isotropy of a Hausdorff \'etale groupoid}
\label{sec: twisted isotropy C*s}

In this section we study the relationships between the full and reduced twisted C*-algebras associated to the interior $\IG$ of the isotropy of a Hausdorff \'etale groupoid $\GG$ and the full and reduced twisted C*-algebras associated to the isotropy groups of $\IG$. The results in this section are extensions of the results in \cite[Section~5.1]{Armstrong2019} to the setting of C*-algebras of groupoid twists.

We saw in \cref{cor: specific subgroupoid twists} that, given a twist $(\EE,i,q)$ over a Hausdorff \'etale groupoid $\GG$, the interior $\IE$ of the isotropy of $\EE$ is a twist over $\IG$, and for each $u \in \EEo$, the isotropy group $\IE_u \coloneqq \{ \varepsilon \in \IE : r(\varepsilon) = s(\varepsilon) = u \}$ is a twist over the countable discrete group $\IG_{q(u)}$. Thus, recalling the notation defined in \cref{prop: delta^T_varepsilon}, we have
\[
\Sigma_c\big(\IG_{q(u)};\IE_u\big) = \vecspan\{ \delta^\T_\varepsilon : \varepsilon \in \IE_u \}.
\]

\begin{thm} \label{thm: quotient maps}
Let $(\EE,i,q)$ be a twist over a Hausdorff \'etale groupoid $\GG$. Fix $u \in \EEo$, and define $F_u \coloneqq \left\{ f \in \Sigma_c(\IG;\IE) : f\restr{\IE_u} \equiv 0 \right\}$.
\begin{enumerate}[label=(\alph*)]
\item \label{item: reduced quotient map} Let $J_u^r$ denote the closure of $F_u$ in the reduced norm. Then $J_u^r$ is an ideal of $C_r^*(\IG;\IE)$, and there is a surjective $*$-homomorphism
\[
\theta_u^r\colon C_r^*(\IG;\IE) / J_u^r \to C_r^*(\IG_{q(u)}; \IE_u)
\]
satisfying $\theta_u^r(f + J_u^r) = f\restr{\IE_u}$ for all $f \in \Sigma_c(\IG;\IE)$.
\item \label{item: full quotient map} Let $J_u$ denote the closure of $F_u$ in the full norm. Then $J_u$ is an ideal of $C^*(\IG;\IE)$, and there is an isomorphism
\[
\theta_u\colon C^*(\IG;\IE) / J_u \to C^*(\IG_{q(u)}; \IE_u)
\]
satisfying $\theta_u(f + J_u) = f\restr{\IE_u}$ for all $f \in \Sigma_c(\IG;\IE)$.
\end{enumerate}
\end{thm}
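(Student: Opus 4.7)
My plan is to proceed in three stages: first establish the result at the algebraic $\Sigma_c$-level, then lift to the reduced completion for part~\cref{item: reduced quotient map}, and finally lift to the full completion for part~\cref{item: full quotient map}.

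At the algebraic level, I would verify that $R\colon f \mapsto f\restr{\IE_u}$ is a surjective $*$-algebra homomorphism from $\Sigma_c(\IG;\IE)$ onto $\Sigma_c(\IG_{q(u)};\IE_u)$ with kernel $F_u$. That $R$ is multiplicative follows from the convolution formula \cref{eqn: convolution as a finite sum}: for $\varepsilon \in \IE_u$ the finite sum ranges over $\alpha \in \IG_{q(u)}$, and since $\IG_{q(u)}$ is an isotropy group both $\varepsilon P(\alpha)^{-1}$ and $P(\alpha)$ remain in $\IE_u$, so the convolution restricts correctly; involution is immediate because $\IE_u$ is closed under inversion. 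The same observation shows $F_u$ is a two-sided $*$-ideal. For surjectivity, by \cref{prop: delta^T_varepsilon} it suffices to hit each $\delta^\T_\varepsilon$ for $\varepsilon \in \IE_u$: using \cref{lemma: CLS bisections and units} I would choose an open bisection $B \subseteq \IG$ containing $q(\varepsilon)$ with a continuous local section $P_B\colon B \to \IE$ such that $P_B(q(\varepsilon)) = \varepsilon$, take the continuous map $t_B\colon q^{-1}(B) \to \T$ from \cref{lemma: T-action properties}\cref{item: t_alpha cts}, and pick $h \in C_c(B)$ with $h(q(\varepsilon)) = 1$; then $f_\varepsilon(\zeta) \coloneqq h(q(\zeta)) \, t_B(\zeta)$ (extended by zero off $q^{-1}(B)$) belongs to $\Sigma_c(\IG;\IE)$, and because $B$ is a bisection we have $B \cap \IG_{q(u)} = \{q(\varepsilon)\}$, forcing $f_\varepsilon\restr{\IE_u} = \delta^\T_\varepsilon$.

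For part~\cref{item: reduced quotient map}, the key point is the fibrewise identification of regular representations: combining \cref{prop: ONB for L^2} with the formula in \cref{lemma: pi_u(f)delta}, $\pi_u^\II(f)$ acts on $L^2(\IG_{q(u)};\IE_u)$ by exactly the same rule as the regular representation of $\Sigma_c(\IG_{q(u)};\IE_u)$ at $u$ applied to $f\restr{\IE_u}$. Hence $\lv \pi_u^\II(f) \rv$ equals the reduced norm of $f\restr{\IE_u}$ in $C_r^*(\IG_{q(u)};\IE_u)$, and since the reduced norm on $\Sigma_c(\IG;\IE)$ dominates $\lv \pi_u^\II(f) \rv$, the map $R$ is norm-decreasing into $C_r^*(\IG_{q(u)};\IE_u)$ and extends to a C*-homomorphism $R_r\colon C_r^*(\IG;\IE) \to C_r^*(\IG_{q(u)};\IE_u)$. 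Since $R_r$ kills $F_u$ it also vanishes on $J_u^r = \overline{F_u}^{\lv \cdot \rv_r}$ and therefore descends to the required $\theta_u^r$. Surjectivity follows because the image of $R_r$ is a closed $*$-subalgebra of $C_r^*(\IG_{q(u)};\IE_u)$ containing the dense subspace $\Sigma_c(\IG_{q(u)};\IE_u)$ produced in the first paragraph.

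For part~\cref{item: full quotient map}, apply \cref{lemma: *-hom extension to full C*} to the composition of $R$ with the inclusion $\Sigma_c(\IG_{q(u)};\IE_u) \hookrightarrow C^*(\IG_{q(u)};\IE_u)$ to obtain a C*-homomorphism $R^*\colon C^*(\IG;\IE) \to C^*(\IG_{q(u)};\IE_u)$ extending $R$; it vanishes on $J_u = \overline{F_u}^{\lv \cdot \rv}$ and hence descends to $\theta_u$, which is surjective by the argument of the previous paragraph. For injectivity I would match the two full norms via the standard characterisation of quotient C*-norms: the norm of the class of $f \in \Sigma_c(\IG;\IE)$ in $C^*(\IG;\IE)/J_u$ equals the supremum of $\lv \pi(f) \rv$ over $*$-representations $\pi$ of $\Sigma_c(\IG;\IE)$ annihilating $F_u$, and by the first paragraph such $\pi$ correspond bijectively (via the $*$-algebra isomorphism $\Sigma_c(\IG;\IE)/F_u \to \Sigma_c(\IG_{q(u)};\IE_u)$) to $*$-representations of $\Sigma_c(\IG_{q(u)};\IE_u)$. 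The supremum on the group side is precisely the full norm of $R(f)$ in $C^*(\IG_{q(u)};\IE_u)$, so $\theta_u$ is isometric and hence a C*-isomorphism. The main technical hurdle is the explicit extension step producing $f_\varepsilon$ with $f_\varepsilon\restr{\IE_u} = \delta^\T_\varepsilon$: because $\IE$ is not \'etale this cannot be done by naive extension by zero and genuinely requires the local trivialisation machinery of \cref{lemma: CLS bisections and units,lemma: T-action properties}; once that is in place, part~\cref{item: reduced quotient map} reduces to a fibrewise identification of regular representations and part~\cref{item: full quotient map} to an abstract argument about representations vanishing on an ideal.
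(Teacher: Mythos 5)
Your proposal is correct, and parts of it track the paper closely: the boundedness of $f \mapsto f\restr{\IE_u}$ in the reduced norm via the fibrewise identification $\pi_u^\II(f) = \rho^{\IE_u}(f\restr{\IE_u})$, the extension to the completions, and the surjectivity via explicit preimages of the $\delta^\T_\varepsilon$ (your $f_\varepsilon$ is essentially the paper's $j_\varepsilon$ from \cref{lemma: j_varepsilon}, up to the phase correction $\overline{z_\varepsilon}$, which you absorb by normalising the local section so that $P_B(q(\varepsilon)) = \varepsilon$) are all the same as in the paper. Where you genuinely diverge is the injectivity of $\theta_u$ in part~\cref{item: full quotient map}. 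The paper builds an explicit one-sided inverse $\eta_u\colon C^*(\IG_{q(u)};\IE_u) \to C^*(\IG;\IE)/J_u$ by first showing (\cref{lemma: quotient unitaries}) that the classes $j_\varepsilon + J_u$ form a family of unitaries satisfying $j_\varepsilon j_\zeta + J_u = j_{\varepsilon\zeta} + J_u$ and $j_{z\cdot\varepsilon} + J_u = \overline{z}(j_\varepsilon + J_u)$, and then invoking the universal property of the full twisted group C*-algebra (\cref{thm: universal property}); checking $\eta_u \circ \theta_u = \id$ on the dense subspace $\{f + J_u\}$ then gives injectivity. You instead identify the quotient norm $\lv f + J_u \rv$ with the supremum of $\lv\pi(f)\rv$ over $*$-representations of $\Sigma_c(\IG;\IE)$ annihilating $F_u$ (using \cref{lemma: *-hom extension to full C*} to pass between representations of $\Sigma_c$ and of the full completion), and then use the algebraic isomorphism $\Sigma_c(\IG;\IE)/F_u \cong \Sigma_c(\IG_{q(u)};\IE_u)$ — which requires knowing that the kernel of restriction is exactly $F_u$ and that restriction is surjective, both established in your first paragraph — to match this supremum with the full norm of $f\restr{\IE_u}$. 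Both arguments are sound. Your route is more abstract and shorter: it bypasses \cref{lemma: quotient unitaries} and the entire universal-property apparatus of \cref{thm: universal property}, at the cost of relying on the standard representation-theoretic description of quotient C*-norms. The paper's route is more concrete and produces the inverse map $\eta_u$ explicitly, and the unitary relations it establishes make transparent exactly why the analogous argument must fail in the reduced setting (no universal property), which the paper goes on to exploit in its discussion of why $\theta_u^r$ need not be injective.
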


\begin{remark}
Somewhat surprisingly, the map $\theta_u^r\colon C_r^*(\IG;\IE) / J_u^r \to C_r^*(\IG_{q(u)}; \IE_u)$ of \cref{thm: quotient maps}\cref{item: reduced quotient map} is not in general an isomorphism, unlike the analogue for the full C*-algebras in \cref{thm: quotient maps}\cref{item: full quotient map}. In \cref{thm: reduced quotient map not injective}, we prove this by using an example introduced by Willett in \cite{Willett2015} of a \emph{nonamenable} groupoid whose full and reduced C*-algebras coincide.
\end{remark}

In order to prove \cref{thm: quotient maps}, we need several preliminary results. We begin by showing that $J_u^r$ and $J_u$ are ideals of $C_r^*(\IG;\IE)$ and $C^*(\IG;\IE)$, respectively.

\begin{lemma} \label{lemma: ideals for quotients}
Let $(\EE,i,q)$ be a twist over a Hausdorff \'etale groupoid $\GG$. Fix $u \in \EEo$, and define $F_u \coloneqq \big\{ f \in \Sigma_c(\IG;\IE) : f\restr{\IE_u} \equiv 0 \big\}$. Then $F_u$ is an algebraic ideal of $\Sigma_c(\IG;\IE)$. Let $J_u^r$ and $J_u$ denote the closures of $F_u$ in the reduced and full norms, respectively. Then $J_u^r$ is an ideal of $C_r^*(\IG;\IE)$, and $J_u$ is an ideal of $C^*(\IG;\IE)$.
\end{lemma}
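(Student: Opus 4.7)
The plan is to first verify directly that $F_u$ is a self-adjoint two-sided algebraic ideal of $\Sigma_c(\IG;\IE)$, and then to conclude that its closure in each of the C*-norms is a closed two-sided ideal of the corresponding C*-algebra by the standard continuity-of-multiplication argument.

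First, $F_u$ is clearly a vector subspace of $\Sigma_c(\IG;\IE)$ since the condition $f\restr{\IE_u} \equiv 0$ is preserved by pointwise addition and scalar multiplication. For self-adjointness, observe that $\IE_u$ is a group, so $\varepsilon \in \IE_u$ implies $\varepsilon^{-1} \in \IE_u$; hence if $f \in F_u$ then $f^*(\varepsilon) = \overline{f(\varepsilon^{-1})} = 0$ for every $\varepsilon \in \IE_u$, and so $f^* \in F_u$.

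To check that $F_u$ is a two-sided algebraic ideal, I would use the finite-sum form \cref{eqn: convolution as a finite sum} of the convolution in $\Sigma_c(\IG;\IE)$. Fix any (not necessarily continuous) global section $P\colon \IG \to \IE$ for $q\restr{\IE}$, fix $f \in F_u$ and $g \in \Sigma_c(\IG;\IE)$, and fix $\varepsilon \in \IE_u$. Since $s(\varepsilon) = u$, the index set in \cref{eqn: convolution as a finite sum} is $\IG_{q(u)}$, the isotropy group of $\IG$ at $q(u)$. The key observation is that for every $\alpha \in \IG_{q(u)}$, the element $P(\alpha) \in \IE$ lies in $\IE_u$: indeed $q(P(\alpha)) = \alpha$ is in the isotropy of $\IG$ at $q(u)$, and since $q$ restricts to a homeomorphism of unit spaces, $r(P(\alpha)) = s(P(\alpha)) = u$. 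Consequently, $\varepsilon P(\alpha)^{-1} \in \IE_u$ as well, because $\IE_u$ is a group. Applying \cref{eqn: convolution as a finite sum} and using that $f$ vanishes on $\IE_u$ gives
\[
(gf)(\varepsilon) \;=\; \sum_{\alpha \in \IG_{q(u)}} g\big(\varepsilon P(\alpha)^{-1}\big) \, f\big(P(\alpha)\big) \;=\; 0,
\]
and similarly $(fg)(\varepsilon) = 0$ since each $\varepsilon P(\alpha)^{-1} \in \IE_u$. Hence both $fg$ and $gf$ lie in $F_u$, so $F_u$ is a two-sided algebraic ideal of $\Sigma_c(\IG;\IE)$.

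Finally, $J_u^r$ (respectively $J_u$) is by definition the closure of $F_u$ in the reduced (respectively full) norm, hence is a closed self-adjoint subspace. To show it is a two-sided ideal of $C_r^*(\IG;\IE)$, fix $a \in J_u^r$ and $b \in C_r^*(\IG;\IE)$, choose $f_n \in F_u$ with $f_n \to a$ and $g_n \in \Sigma_c(\IG;\IE)$ with $g_n \to b$ in the reduced norm, and note that $f_n g_n, g_n f_n \in F_u \subseteq J_u^r$ by the previous paragraph. Since multiplication is jointly continuous in a C*-algebra, $f_n g_n \to ab$ and $g_n f_n \to ba$, and closedness of $J_u^r$ gives $ab, ba \in J_u^r$. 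The argument for $J_u$ in $C^*(\IG;\IE)$ is identical, using the full norm. No step is a genuine obstacle; the only substantive observation is the one that $P(\alpha), \varepsilon P(\alpha)^{-1} \in \IE_u$ whenever $\varepsilon \in \IE_u$ and $\alpha \in \IG_{q(u)}$, which is exactly what makes the isotropy group $\IE_u$ behave like a group under the convolution of $\Sigma_c(\IG;\IE)$.
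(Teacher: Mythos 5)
Your proof is correct and follows essentially the same route as the paper: verify that $F_u$ is a self-adjoint algebraic ideal using that $\IE_u$ is a group (so the convolution at a point of $\IE_u$ only involves values of $f$ on $\IE_u$), then pass to closures via continuity of the C*-operations. The only cosmetic difference is that you evaluate the convolution via the finite-sum formula \labelcref{eqn: convolution as a finite sum} with a section $P$, whereas the paper integrates directly over the fibre $\IE_u$; both hinge on the same observation.
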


\begin{proof}
It is clear that $F_u$ is a linear subspace of $\Sigma_c(\IG;\IE)$. To see that $F_u$ is an algebraic ideal, fix $f \in F_u$ and $g \in \Sigma_c(\IG;\IE)$. For all $\varepsilon \in \IE_u$, we have $\varepsilon^{-1} \in \IE_u$, and so $f^*(\varepsilon) = \overline{f(\varepsilon^{-1})} = 0$. Thus $f^* \in F_u$. For all $\varepsilon \in \IE_u$, we have
\[
(fg)(\varepsilon) = \int_{\IE_u} f(\zeta) \, g(\zeta^{-1}\varepsilon) \, \d \lambda^u(\zeta) = 0 \quad \text{ and } \quad (gf)(\varepsilon) = \int_{\IE_u} g(\varepsilon\zeta^{-1}) \, f(\zeta) \, \d \lambda_u(\zeta) = 0.
\]
Hence $fg, gf \in F_u$, and thus $F_u$ is an algebraic ideal of $\Sigma_c(\IG;\IE)$. Since all C*-algebraic operations are continuous, it follows that $J_u^r$ and $J_u$ are ideals of $C_r^*(\IG;\IE)$ and $C^*(\IG;\IE)$, respectively.
\end{proof}

\begin{lemma} \label{lemma: j_varepsilon}
Let $(\EE,i,q)$ be a twist over a Hausdorff \'etale groupoid $\GG$. Fix $u \in \EEo$. For each $\varepsilon \in \IE_u$, define $\delta^\T_\varepsilon \in \Sigma_c\big(\IG_{q(u)};\IE_u\big)$ as in \cref{prop: delta^T_varepsilon}. Then for each $\varepsilon \in \IE_u$, there exists $j_\varepsilon \in \Sigma_c(\IG;\IE)$ such that $j_\varepsilon\restr{\IE_u} = \delta^\T_\varepsilon$ and $j_{w \cdot \varepsilon} = \overline{w} \, j_\varepsilon$ for each $w \in \T$.
\end{lemma}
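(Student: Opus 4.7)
The plan is to construct $j_\varepsilon$ as a bump function obtained from a local trivialisation of $\IE$ around $q(\varepsilon)$, using the trivialising homeomorphism to encode the scalar $z$ attached to each element of the fibre $q^{-1}(q(\varepsilon))$. Since the constructed object depends on choices of trivialisation data, I will organise the proof so that a single choice is made per orbit $\T \cdot \varepsilon$, making the asserted relation $j_{w \cdot \varepsilon} = \overline{w} \, j_\varepsilon$ automatic.

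Fix $\varepsilon \in \IE_u$. Since $\IG_{q(u)}$ is a discrete closed subgroup of $\IG$, I can find an open bisection $B$ of $\IG$ containing $q(\varepsilon)$ with $B \medcap \IG_{q(u)} = \{q(\varepsilon)\}$. By \cref{lemma: CLS bisections and units} applied to the twist $(\IE, i_{\II}, q_{\II})$ over $\IG$ (which exists by \cref{cor: specific subgroupoid twists}\cref{item: isotropy subgroupoid twist}), I can choose a continuous local section $P\colon B \to \IE$ for $q$, giving a homeomorphism $\phi_P\colon B \times \T \to q^{-1}(B)$, and by \cref{lemma: T-action properties}\cref{item: t_alpha cts} a continuous map $t\colon q^{-1}(B) \to \T$ satisfying $\phi_P^{-1}(\zeta) = (q(\zeta), t(\zeta))$. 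Urysohn's lemma on the locally compact Hausdorff space $B$ produces $h \in C_c(B)$ with $h(q(\varepsilon)) = 1$. I then define
\[
j_\varepsilon(\zeta) \coloneqq \begin{cases} \overline{t(\varepsilon)} \, h(q(\zeta)) \, t(\zeta) & \text{if } \zeta \in q^{-1}(B), \\ 0 & \text{otherwise,} \end{cases}
\]
and for every $w \in \T$, I define $j_{w \cdot \varepsilon}$ by the same formula using the \emph{same} data $B, P, h$, but with $\varepsilon$ replaced by $w \cdot \varepsilon$. Since $q^{-1}(\supp(h)) = \phi_P(\supp(h) \times \T)$ is a compact subset of the open set $q^{-1}(B)$, extending by zero yields a compactly supported continuous function on $\IE$. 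The $\T$-equivariance reduces to the identity $t(z \cdot \zeta) = z \, t(\zeta)$, which follows from the uniqueness assertion in \cref{lemma: T-action properties}\cref{item: t_alpha cts} together with $q(z \cdot \zeta) = q(\zeta)$. Hence $j_\varepsilon \in \Sigma_c(\IG;\IE)$.

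To check $j_\varepsilon\restr{\IE_u} = \delta^\T_\varepsilon$, fix $\zeta \in \IE_u$. If $q(\zeta) \notin B$, both sides vanish. If $q(\zeta) \in B$, then $q(\zeta) \in B \medcap \IG_{q(u)} = \{q(\varepsilon)\}$, so $\zeta = w \cdot \varepsilon$ for a unique $w \in \T$ and $t(\zeta) = w \, t(\varepsilon)$, whence $j_\varepsilon(\zeta) = \overline{t(\varepsilon)} \cdot 1 \cdot w \, t(\varepsilon) = w = \delta^\T_\varepsilon(\zeta)$. For the relation $j_{w \cdot \varepsilon} = \overline{w} \, j_\varepsilon$, observe that $t(w \cdot \varepsilon) = w \, t(\varepsilon)$, so the scalar prefactor in the definition of $j_{w \cdot \varepsilon}$ is $\overline{w \, t(\varepsilon)} = \overline{w} \, \overline{t(\varepsilon)}$, and since $B$, $P$, and $h$ are unchanged, $j_{w \cdot \varepsilon}(\zeta) = \overline{w} \, j_\varepsilon(\zeta)$ on all of $\IE$.

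The only mildly delicate point is ensuring that $j_\varepsilon$, which is defined piecewise, is continuous across the boundary of $q^{-1}(B)$; this is handled by the observation that $\supp(j_\varepsilon) \subseteq q^{-1}(\supp(h))$ is a compact subset of the open set $q^{-1}(B)$, so the extension by zero is continuous. Everything else is a direct calculation from the definitions and the previously established properties of the local trivialisation.
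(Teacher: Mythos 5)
Your proposal is correct and follows essentially the same route as the paper: build $j_\varepsilon$ on $q^{-1}(B)$ as $\zeta \mapsto \overline{t(\varepsilon)}\, t(\zeta)\, h(q(\zeta))$ using a local trivialisation of $\IE$ over a bisection of $\IG$ and a Urysohn bump, extend by zero, and get the relation $j_{w\cdot\varepsilon} = \overline{w}\, j_\varepsilon$ by fixing the data once per fibre $q^{-1}(q(\varepsilon))$ (the paper achieves the same by indexing the data by $q(\varepsilon)$, which is constant on the $\T$-orbit). The only cosmetic difference is that you shrink $B$ so that $B \medcap \IG_{q(u)} = \{q(\varepsilon)\}$, whereas the paper deduces $q(\zeta) = q(\varepsilon)$ directly from injectivity of $r$ on the bisection; both are valid.
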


\begin{proof}
By \cref{cor: specific subgroupoid twists}\cref{item: isotropy subgroupoid twist}, $\IE$ is a twist over $\IG$, and so by \cref{lemma: CLS bisections and units}, we can find a local trivialisation $(B_\alpha, P_\alpha, \phi_{P_\alpha})_{\alpha \in \IG}$ for $\IE$ such that for each $\alpha \in \IG$, $B_\alpha$ is an open bisection of $\IG$ containing $\alpha$. Fix $\varepsilon \in \IE_u$. Use Urysohn's lemma to choose $h_{q(\varepsilon)} \in C_c(\IG)$ such that $\supp(h_{q(\varepsilon)}) \subseteq B_{q(\varepsilon)}$ and $h_{q(\varepsilon)}(q(\varepsilon)) = 1$. Recall from \cref{lemma: T-action properties}\cref{item: t_alpha cts} that for each $\zeta \in q^{-1}(B_{q(\varepsilon)})$, there is a unique $z_\zeta \in \T$ such that $\zeta = \phi_{P_{q(\varepsilon)}}(q(\zeta),z_\zeta)$, and the map $\zeta \mapsto z_\zeta$ is continuous on $q^{-1}(B_{q(\varepsilon)})$. Thus $\zeta \mapsto z_\zeta \, h_{q(\varepsilon)}(q(\zeta))$ is a continuous map from $q^{-1}(B_{q(\varepsilon)})$ to $\C$. Define $j_\varepsilon\colon \IE \to \C$ by
\[
j_\varepsilon(\zeta) \coloneqq \begin{cases}
\overline{z_\varepsilon} \, z_\zeta \, h_{q(\varepsilon)}(q(\zeta)) & \ \text{if } \zeta \in q^{-1}(B_{q(\varepsilon)}) \\
0 & \ \text{if } \zeta \notin q^{-1}(B_{q(\varepsilon)}).
\end{cases}
\]
Then $\supp(j_\varepsilon) \subseteq q^{-1}\big(\!\supp(h_{q(\varepsilon)})\big) = \T \cdot P_{q(\varepsilon)}\big(\!\supp(h_{q(\varepsilon)})\big)$. Since $\T$ and $P_{q(\varepsilon)}\big(\!\supp(h_{q(\varepsilon)})\big)$ are compact sets and the action of $\T$ on $\IE$ is continuous, $\supp(j_\varepsilon)$ is compact. Since $j_\varepsilon\restr{q^{-1}(B_{q(\varepsilon)})}$ is continuous and $q^{-1}(B_{q(\varepsilon)})$ is open, $j_\varepsilon$ is continuous on all of $\IE$. To see that $j_\varepsilon$ is $\T$-equivariant, fix $\zeta \in \IE$ and $w \in \T$. Then $w \cdot \zeta \in q^{-1}(B_{q(\varepsilon)})$ if and only if $\zeta \in q^{-1}(B_{q(\varepsilon)})$. If $\zeta \in q^{-1}(B_{q(\varepsilon)})$, then $z_{w \cdot \zeta} = w \, z_\zeta$, and so $j_\varepsilon(w \cdot \zeta) = \overline{z_\varepsilon} \, w \, z_\zeta \, h_{q(\varepsilon)}(q(\zeta)) = w \, j_\varepsilon(\zeta)$. On the other hand, if $\zeta \notin q^{-1}(B_{q(\varepsilon)})$, then $j_\varepsilon(w \cdot \zeta) = 0 = w \, j_\varepsilon(\zeta)$. Hence $j_\varepsilon \in \Sigma_c(\IG;\IE)$.

We now show that $j_\varepsilon\restr{\IE_u} = \delta^\T_\varepsilon$. Fix $\zeta \in \IE_u$. Suppose first that $\zeta \notin q^{-1}(B_{q(\varepsilon)})$. Then $q(\zeta) \ne q(\varepsilon)$, and so $\zeta \ne w \cdot \varepsilon$ for all $w \in \T$. Hence $j_\varepsilon(\zeta) = 0 = \delta^\T_\varepsilon(\zeta)$. Now suppose that $\zeta \in q^{-1}(B_{q(\varepsilon)})$. Since $r\restr{B_{q(\varepsilon)}}(q(\zeta)) = q(u) = r\restr{B_{q(\varepsilon)}}(q(\varepsilon))$ and $B_{q(\varepsilon)}$ is a bisection, we have $q(\zeta) = q(\varepsilon)$. Hence $h_{q(\varepsilon)}(q(\zeta)) = 1$, and $\zeta = z_\zeta \cdot P_{q(\varepsilon)}(q(\varepsilon)) = (\overline{z_\varepsilon} \, z_\zeta) \cdot \varepsilon$. Therefore,
\[
j_\varepsilon(\zeta) = \overline{z_\varepsilon} \, z_\zeta \, h_{q(\varepsilon)}(q(\zeta)) = \overline{z_\varepsilon} \, z_\zeta = \delta^\T_\varepsilon(\zeta),
\]
and so $j_\varepsilon\restr{\IE_u} = \delta^\T_\varepsilon$. Finally, for all $w \in \T$, we have $q(w \cdot \varepsilon) = q(\varepsilon)$ and $\overline{z_{w \cdot \varepsilon}} = \overline{w} \, \overline{z_\varepsilon}$, and thus $j_{w \cdot \varepsilon} = \overline{w} \, j_\varepsilon$.
\end{proof}

We will use the following lemma to show that the map $\theta_u$ of \cref{thm: quotient maps}\cref{item: full quotient map} is injective.

\begin{lemma} \label{lemma: quotient unitaries}
Let $(\EE,i,q)$ be a twist over a Hausdorff \'etale groupoid $\GG$. Fix $u \in \EEo$, and let $J_u$ be the ideal of $C^*(\IG;\IE)$ defined in \cref{lemma: ideals for quotients}. For each $\varepsilon \in \IE_u$, define $\delta^\T_\varepsilon \in \Sigma_c\big(\IG_{q(u)};\IE_u\big)$ as in \cref{prop: delta^T_varepsilon}, and use \cref{lemma: j_varepsilon} to choose $j_\varepsilon \in \Sigma_c(\IG;\IE)$ such that $j_\varepsilon\restr{\IE_u} = \delta^\T_\varepsilon$. For any $\varepsilon \in \IE_u$ and $k_\varepsilon \in \Sigma_c(\IG;\IE)$ satisfying $k_\varepsilon\restr{\IE_u} = \delta^\T_\varepsilon$, we have $j_\varepsilon - k_\varepsilon \in J_u$. The quotient C*-algebra $C^*(\IG;\IE) / J_u$ is unital with identity $j_u + J_u$, and each $j_\varepsilon + J_u$ is a unitary element of $C^*(\IG;\IE) / J_u$. Moreover, $j_\varepsilon j_\zeta + J_u = j_{\varepsilon\zeta} + J_u$ for all $\varepsilon, \zeta \in \IE_u$.
\end{lemma}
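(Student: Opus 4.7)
The plan is to reduce everything to \cref{prop: delta^T_varepsilon} by showing that the restriction map $\rho\colon \Sigma_c(\IG;\IE) \to \Sigma_c(\IG_{q(u)};\IE_u)$ sending $f$ to $f\restr{\IE_u}$ is a $*$-homomorphism with kernel exactly $F_u$, and noting that $\rho(j_\varepsilon) = \delta^\T_\varepsilon$ by construction of $j_\varepsilon$. The key observation making $\rho$ multiplicative is that $\IE_u$ is a subgroup of $\IE$: since $\IE \subseteq \Iso(\EE)$, any $\zeta \in \IE$ with $s(\zeta) = u$ also has $r(\zeta) = u$, so the source fibre $\IE_{s(\varepsilon)}$ appearing in the convolution formula equals $\IE_u$ whenever $\varepsilon \in \IE_u$, and for $\zeta \in \IE_u$ the element $\varepsilon\zeta^{-1}$ lies in $\IE_u$ as well. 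Combined with the fact that the source-fibre measure on $\IE_u$ inherited from $\IE$ agrees with the one used to define $\Sigma_c(\IG_{q(u)};\IE_u)$ (both come from pulling back Haar measure on $\T$ via the local trivialisations of the twist), this yields $(fg)\restr{\IE_u} = f\restr{\IE_u} \cdot g\restr{\IE_u}$; compatibility with $*$ follows immediately from $\IE_u$ being closed under inversion.

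Granting that $\rho$ is a $*$-homomorphism with kernel $F_u$, each assertion becomes an algebraic consequence of \cref{prop: delta^T_varepsilon}. If $k_\varepsilon\restr{\IE_u} = \delta^\T_\varepsilon$, then $\rho(j_\varepsilon - k_\varepsilon) = 0$, so $j_\varepsilon - k_\varepsilon \in F_u \subseteq J_u$. For the multiplicativity assertion, I would compute $\rho(j_\varepsilon j_\zeta) = \delta^\T_\varepsilon \delta^\T_\zeta = \delta^\T_{\varepsilon\zeta} = \rho(j_{\varepsilon\zeta})$, placing $j_\varepsilon j_\zeta - j_{\varepsilon\zeta}$ in $F_u \subseteq J_u$. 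Unitarity follows analogously: $\rho(j_\varepsilon j_\varepsilon^*) = \delta^\T_\varepsilon (\delta^\T_\varepsilon)^* = \delta^\T_\varepsilon \delta^\T_{\varepsilon^{-1}} = \delta^\T_u = \rho(j_u)$, and symmetrically $\rho(j_\varepsilon^* j_\varepsilon) = \rho(j_u)$. For the identity assertion, for any $f \in \Sigma_c(\IG;\IE)$ one has $\rho(j_u f - f) = \delta^\T_u \rho(f) - \rho(f) = 0$ and $\rho(fj_u - f) = 0$, so $j_u + J_u$ acts as a two-sided identity on the dense $*$-subalgebra $\{a + J_u : a \in \Sigma_c(\IG;\IE)\}$ of $C^*(\IG;\IE)/J_u$; joint continuity of multiplication in the quotient C*-algebra then extends this to the whole of $C^*(\IG;\IE)/J_u$.

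The one step I expect to require genuine care is the verification that $\rho$ is multiplicative, since the convolution in $\Sigma_c(\IG;\IE)$ involves $\T$-equivariance and pulled-back Haar measures on the fibres of $q$; however, as explained above, this reduces to the purely algebraic fact that $\IE_u$ is a subgroupoid of $\IE$, together with the measure-theoretic compatibility built into the definition of the twist. No analytic subtleties should arise in the final extension of the identity claim from $\Sigma_c(\IG;\IE) + J_u$ to $C^*(\IG;\IE)/J_u$, which is a standard density-continuity argument.
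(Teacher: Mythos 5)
Your proof is correct and is essentially the paper's argument in a cleaner packaging: the paper verifies each identity ($j_u f - f$, $j_\varepsilon j_\varepsilon^* - j_u$, and $j_\varepsilon j_\zeta - j_{\varepsilon\zeta}$ lying in $J_u$) by computing the relevant convolution integrals over $\IE_u$ directly, and those computations are exactly the multiplicativity of your restriction map $\rho$ specialised to the functions in question, combined with the relations among the $\delta^\T_\varepsilon$ already established in \cref{prop: delta^T_varepsilon}. The fact that restriction to $\IE_u$ is a $*$-homomorphism with kernel $F_u$ is in any case needed (and asserted as routine) in the paper's proof of \cref{thm: quotient maps}, so factoring it out up front as you do loses nothing and avoids repeating the integral computations.
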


\begin{proof}
For any $\varepsilon \in \IE_u$ and $k_\varepsilon \in \Sigma_c(\IG;\IE)$ satisfying $k_\varepsilon\restr{\IE_u} = \delta^\T_\varepsilon$, we have $(j_\varepsilon - k_\varepsilon)\restr{\IE_u} \equiv 0$, and hence $j_\varepsilon - k_\varepsilon \in J_u$.

We now show that $C^*(\IG;\IE) / J_u$ is unital. Fix $f \in \Sigma_c(\IG;\IE)$. For all $\xi \in \IE_u$, we have
\[
(j_u f)(\xi) = \int_{\IE_u} j_u(\eta) \, f(\eta^{-1} \xi) \, \d \lambda^u(\eta) = \int_\T \delta^\T_u(w \cdot u) \, f\big(\overline{w} \cdot (u\xi)\big) \, \d w = \int_\T w \, \overline{w} \, f(\xi) \, \d w = f(\xi).
\]
Hence $(j_u f - f)\restr{\IE_u} \equiv 0$, and so $j_u f - f \in J_u$. A similar argument shows that $f j_u - f \in J_u$. Thus
\[
(f + J_u)(j_u + J_u) \,=\, f j_u + J_u \,=\, f + J_u \,=\, j_u f + J_u \,=\, (j_u + J_u)(f + J_u),
\]
and so $j_u + J_u$ is the identity element of $C^*(\IG;\IE) / J_u$.

Fix $\varepsilon \in \IE_u$. We will show that $j_\varepsilon + J_u$ is a unitary element of $C^*(\IG;\IE) / J_u$. For all $\xi \in \IE_u$, we have
\begin{align*}
(j_\varepsilon j_\varepsilon^*)(\xi) \,= \int_{\IE_u} j_\varepsilon(\eta) \, \overline{j_\varepsilon(\xi^{-1}\eta)} \, \d \lambda^u(\eta) \,&= \int_\T \delta^\T_\varepsilon(w \cdot \varepsilon) \, \overline{\delta^\T_\varepsilon\big(w \cdot (\xi^{-1}\varepsilon)\big)} \, \d w \\
&= \int_\T w \, \overline{w} \, \overline{\delta^\T_\varepsilon(\xi^{-1}\varepsilon)} \, \d w \,=\, \overline{\delta^\T_u(\xi^{-1})} \,=\, j_u(\xi).
\end{align*}
Hence $(j_\varepsilon j_\varepsilon^* - j_u)\restr{\IE_u} \equiv 0$, and so $j_\varepsilon j_\varepsilon^* - j_u \in J_u$. Therefore,
\[
(j_\varepsilon + J_u)(j_\varepsilon + J_u)^* = j_\varepsilon j_\varepsilon^* + J_u = j_u + J_u,
\]
and a similar argument shows that $(j_\varepsilon + J_u)^*(j_\varepsilon + J_u) = j_u + J_u$. Thus $j_\varepsilon + J_u$ is a unitary.

Finally, fix $\varepsilon, \zeta \in \IE_u$. For all $\xi \in \IE_u$, we have
\begin{align*}
(j_\varepsilon j_\zeta)(\xi) \,= \int_{\IE_u} j_\varepsilon(\eta) \, j_\zeta(\eta^{-1}\xi) \, \d \lambda^u(\eta) \,&= \int_\T \delta^\T_\varepsilon(w \cdot \varepsilon) \, \delta^\T_\zeta\big(\overline{w} \cdot (\varepsilon^{-1}\xi)\big) \, \d w \\
&= \int_\T w \, \overline{w} \, \delta^\T_\zeta(\varepsilon^{-1}\xi) \, \d w \,=\, \delta^\T_\zeta(\varepsilon^{-1}\xi) \,=\, j_{\varepsilon\zeta}(\xi),
\end{align*}
and hence $(j_\varepsilon j_\zeta - j_{\varepsilon\zeta})\restr{\IE_u} \equiv 0$. Thus $j_\varepsilon j_\zeta - j_{\varepsilon\zeta} \in J_u$, and so $j_\varepsilon j_\zeta + J_u = j_{\varepsilon\zeta} + J_u$.
\end{proof}

\begin{proof}[Proof of \cref{thm: quotient maps}]
Fix $f \in \Sigma_c(\IG;\IE)$. Then $f\restr{\IE_u}$ is continuous. We claim that $f\restr{\IE_u} \in \Sigma_c\big(\IG_{q(u)};\IE_u\big)$. We have $\osupp(f\restr{\IE_u}) = \osupp(f) \medcap \IE_u$, and so since $\IE_u = r\restr{\IE}^{-1}(u)$ is closed in $\IE$, $\supp(f) \medcap \IE_u$ is a compact subset of $\IE_u$. Hence $\supp(f\restr{\IE_u}) \subseteq \supp(f)$, and so $f\restr{\IE_u} \in C_c(\IE_u)$. For all $\varepsilon \in \IE$ and $z \in \T$, we have $z \cdot \varepsilon \in \IE_u$ if and only if $\varepsilon \in \IE_u$, and so $f\restr{\IE_u}(z \cdot \varepsilon) = z f\restr{\IE_u}(\varepsilon)$ for all $\varepsilon \in \IE_u$ and $z \in \T$. Hence $f\restr{\IE_u} \in \Sigma_c\big(\IG_{q(u)};\IE_u\big)$.

For part~\cref{item: reduced quotient map}, define $\psi_u^r\colon \Sigma_c(\IG;\IE) \to C_r^*(\IG_{q(u)}; \IE_u)$ by $\psi_u^r(f) \coloneqq f\restr{\IE_u}$. Routine calculations show that $\psi_u^r$ is a $*$-homomorphism that vanishes on $F_u$. We now show that $\psi_u^r$ is bounded in the reduced norm. Fix $f \in \Sigma_c(\IG;\IE)$. For each $v \in \EEo$, let
\[
\pi_v^\II\colon \Sigma_c(\IG;\IE) \to B\big(L^2\big(\IG_{q(v)};\IE_v\big)\big) \quad \text{and} \quad \rho^{\IE_v}\colon \Sigma_c\big(\IG_{q(v)};\IE_v\big) \to B\big(L^2\big(\IG_{q(v)};\IE_v\big)\big)
\]
be the regular representations associated to $v$ of $\Sigma_c(\IG;\IE)$ and $\Sigma_c\big(\IG_{q(v)};\IE_v\big)$, respectively, onto the space of square-integrable $\T$-equivariant functions on $\IE_v$. For each $v \in \EEo$, we have $\pi_v^\II(f) = \rho^{\IE_v}\big(f\restr{\IE_v}\big)$, and hence
\[
\lv \psi_u^r(f) \rv_r = \lv f\restr{\IE_u} \rv_r = \big\lv \rho^{\IE_v}\big(f\restr{\IE_u}\big) \big\rv \le \sup\left\{ \big\lv \pi_v^\II(f) \big\rv : v \in \EEo \right\} = \lv f \rv_r.
\]
So $\psi_u^r$ is bounded, and since $\Sigma_c(\IG;\IE)$ is dense in $C_r^*(\IG;\IE)$, $\psi_u^r$ extends to a C*-homomorphism $\overline{\psi_u^r}\colon C_r^*(\IG;\IE) \to C_r^*(\IG_{q(u)}; \IE_u)$. Recall from \cref{lemma: ideals for quotients} that $J_u^r$ is an ideal of $C_r^*(\IG;\IE)$. Since $\overline{\psi_u^r}$ is bounded and vanishes on $F_u$ by definition, it vanishes on $J_u^r$. Therefore, $\overline{\psi_u^r}$ descends to a $*$-homomorphism
\[
\theta_u^r\colon C_r^*(\IG;\IE) / J_u^r \to C_r^*(\IG_{q(u)}; \IE_u)
\]
satisfying $\theta_u^r(f + J_u^r) = f\restr{\IE_u}$ for all $f \in \Sigma_c(\IG;\IE)$. We claim that $\theta_u^r$ is surjective. Since $\Sigma_c\big(\IG_{q(u)};\IE_u\big)$ is dense in $C_r^*(\IG_{q(u)}; \IE_u)$, it suffices to show that $\Sigma_c\big(\IG_{q(u)};\IE_u\big)$ is contained in the image of $\theta_u^r$. Recall from \cref{prop: delta^T_varepsilon} that
\[
\Sigma_c\big(\IG_{q(u)};\IE_u\big) = \vecspan\!\left\{ \delta^\T_\varepsilon : \varepsilon \in \IE_u \right\}\!.
\]
For each $\varepsilon \in \IE_u$, use \cref{lemma: j_varepsilon} to choose $j_\varepsilon \in \Sigma_c(\IG;\IE)$ such that $j_\varepsilon\restr{\IE_u} = \delta^\T_\varepsilon$. Then for each $\varepsilon \in \IE_u$, we have $\theta_u^r(j_\varepsilon + J_u^r) = j_\varepsilon\restr{\IE_u} = \delta^\T_\varepsilon$, and hence $\theta_u^r$ is surjective.

For part~\cref{item: full quotient map}, define $\psi_u\colon \Sigma_c(\IG;\IE) \to C^*(\IG_{q(u)}; \IE_u)$ by $\psi_u(f) \coloneqq f\restr{\IE_u}$. The argument used in part~\cref{item: reduced quotient map} shows that $\psi_u$ is a $*$-homomorphism that vanishes on $F_u$. Thus \cref{lemma: *-hom extension to full C*} implies that $\psi_u$ extends uniquely to a C*-homomorphism $\overline{\psi_u}\colon C^*(\IG;\IE) \to C^*(\IG_{q(u)}; \IE_u)$. By \cref{lemma: ideals for quotients}, $J_u$ is an ideal of $C^*(\IG;\IE)$, and so a similar argument to the one used in part~\cref{item: reduced quotient map} shows that there is a surjective $*$-homomorphism
\[
\theta_u\colon C^*(\IG;\IE) / J_u \to C^*(\IG_{q(u)}; \IE_u)
\]
satisfying $\theta_u(f + J_u) = f\restr{\IE_u}$ for all $f \in \Sigma_c(\IG;\IE)$. To see that $\theta_u$ is injective, recall from \cref{lemma: j_varepsilon} that for each $\varepsilon \in \IE_u$, there exists $j_\varepsilon \in \Sigma_c(\IG;\IE)$ such that $j_\varepsilon\restr{\IE_u} = \delta^\T_\varepsilon$ and $j_{z \cdot \varepsilon} = \overline{z} \, j_\varepsilon$ for each $z \in \T$. By \cref{lemma: quotient unitaries}, $\big\{ j_\varepsilon + J_u : \varepsilon \in \IE_u \big\}$ is a family of unitary elements of $C^*(\IG;\IE) / J_u$ such that $(j_\varepsilon + J_u)(j_\zeta + J_u) = j_{\varepsilon\zeta} + J_u$ and $j_{z \cdot \varepsilon} + J_u = \overline{z}(j_\varepsilon + J_u)$ for all $\varepsilon, \zeta \in \IE_u$ and $z \in \T$. Hence \cref{thm: universal property} implies that there is a homomorphism
\[
\eta_u\colon C^*(\IG_{q(u)}; \IE_u) \to C^*(\IG;\IE) / J_u
\]
satisfying $\eta_u\big(\delta^\T_\varepsilon\big) = j_\varepsilon + J_u$ for each $\varepsilon \in \IE_u$, where $\delta^\T_\varepsilon \in \Sigma_c\big(\IG_{q(u)};\IE_u\big)$ is defined as in \cref{prop: delta^T_varepsilon}. We claim that $\eta_u \circ \theta_u$ is the identity map on $C^*(\IG;\IE) / J_u$. To see this, observe that since $\left\{ f + J_u : f \in \Sigma_c(\IG;\IE) \right\}$ is a dense subspace of $C^*(\IG;\IE) / J_u$ and since $\eta_u \circ \theta_u$ is continuous, it suffices to show that $\eta_u\big(\theta_u(f + J_u)\big) = f + J_u$ for all $f \in \Sigma_c(\IG;\IE)$. Fix $f \in \Sigma_c(\IG;\IE)$. Since
\[
\theta_u(f + J_u) = f\restr{\IE_u} \in \Sigma_c\big(\IG_{q(u)};\IE_u\big) = \vecspan\!\left\{ \delta^\T_\varepsilon : \varepsilon \in \IE_u \right\}\!,
\]
there is a finite subset $F$ of $\IE_u$ and a collection $\{ c_\varepsilon \in \C {\setminus} \{0\} : \varepsilon \in F \}$ such that $f\restr{\IE_u} = \sum_{\varepsilon \in F} c_\varepsilon \, \delta^\T_\varepsilon$. Since $\eta_u$ is linear, we have
\begin{equation} \label{eqn: eta_u circ theta_u is identity map}
\eta_u\big(\theta_u(f + J_u)\big) \,=\, \eta_u\big(f\restr{\IE_u}\big) \,=\, \eta_u\Big(\sum_{\varepsilon \in F} c_\varepsilon \, \delta^\T_\varepsilon\Big) \,=\, \sum_{\varepsilon \in F} c_\varepsilon \, \eta_u\big(\delta^\T_\varepsilon\big) \,=\, \Big(\sum_{\varepsilon \in F} c_\varepsilon \, j_\varepsilon\Big) + J_u.
\end{equation}
Since $\big(f - \sum_{\varepsilon \in F} c_\varepsilon \, j_\varepsilon\big)\restr{\IE_u} = f\restr{\IE_u} - \sum_{\varepsilon \in F} c_\varepsilon \, \delta^\T_\varepsilon \equiv 0$, we have $f - \sum_{\varepsilon \in F} c_\varepsilon \, j_\varepsilon \in J_u$. Thus we deduce from \cref{eqn: eta_u circ theta_u is identity map} that $\eta_u\big(\theta_u(f + J_u)\big) = f + J_u$. Therefore, $\eta_u \circ \theta_u$ is the identity map on $C^*(\IG;\IE) / J_u$, and so $\theta_u$ is injective.
\end{proof}

\begin{cor} \label{cor: quotient map on unit space algebra}
Let $(\EE,i,q)$ be a twist over a Hausdorff \'etale groupoid $\GG$. Fix $u \in \EEo$. Recall from \cref{prop: delta^T_varepsilon} the definition of the identity element $\delta^\T_u$ of both $C_r^*(\IG;\IE)$ and $C^*(\IG;\IE)$. Recall from \cref{thm: quotient maps} the definitions of the ideals $J_u^r$ of $C_r^*(\IG;\IE)$ and $J_u$ of $C^*(\IG;\IE)$, and the maps
\[
\theta_u^r\colon C_r^*(\IG;\IE) / J_u^r \to C_r^*(\IG_{q(u)}; \IE_u) \quad \text{ and } \quad \theta_u\colon C^*(\IG;\IE) / J_u \to C^*(\IG_{q(u)}; \IE_u).
\]
\begin{enumerate}[label=(\alph*)]
\item \label{item: reduced quotient map on unit space algebra} The map $Q_u^r\colon C_r^*(\IG;\IE) \to C_r^*(\IG_{q(u)}; \IE_u)$ given by $Q_u^r(a) \coloneqq \theta_u^r(a + J_u^r)$ is a surjective $*$-homomorphism. For every $g \in \Sigma_c(\IG;\IE)$ satisfying $q(\supp(g)) \subseteq \GGo$ and $g(u) = 1$, we have $Q_u^r(g) = \delta_u^\T$.
\item \label{item: full quotient map on unit space algebra} The map $Q_u\colon C^*(\IG;\IE) \to C^*(\IG_{q(u)}; \IE_u)$ given by $Q_u(a) \coloneqq \theta_u(a + J_u)$ is a surjective $*$-homomorphism. For every $g \in \Sigma_c(\IG;\IE)$ satisfying $q(\supp(g)) \subseteq \GGo$ and $g(u) = 1$, we have $Q_u(g) = \delta_u^\T$.
\end{enumerate}
\end{cor}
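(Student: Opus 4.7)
The plan is to realise $Q_u^r$ and $Q_u$ as compositions of the canonical quotient maps with $\theta_u^r$ and $\theta_u$ from \cref{thm: quotient maps}, so that surjectivity is immediate, and then to verify the formula $Q_u^r(g) = Q_u(g) = \delta_u^\T$ by a direct restriction calculation using $\T$-equivariance.

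First I would observe that the definition $Q_u^r(a) \coloneqq \theta_u^r(a + J_u^r)$ exhibits $Q_u^r$ as the composition of the canonical quotient map $C_r^*(\IG;\IE) \to C_r^*(\IG;\IE)/J_u^r$ (which is a surjective $*$-homomorphism, since $J_u^r$ is an ideal by \cref{lemma: ideals for quotients}) with the homomorphism $\theta_u^r$ (which is surjective by \cref{thm: quotient maps}\cref{item: reduced quotient map}). Hence $Q_u^r$ is itself a surjective homomorphism, and for $f \in \Sigma_c(\IG;\IE)$ the definition of $\theta_u^r$ gives $Q_u^r(f) = f\restr{\IE_u}$. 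The argument for $Q_u$ is identical, using \cref{lemma: ideals for quotients} and \cref{thm: quotient maps}\cref{item: full quotient map} in place of their reduced counterparts.

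It therefore remains to show that for any $g \in \Sigma_c(\IG;\IE)$ with $q(\supp(g)) \subseteq \GGo$ and $g(u) = 1$, the restriction $g\restr{\IE_u}$ coincides with $\delta_u^\T$ as an element of $\Sigma_c(\IG_{q(u)};\IE_u)$. The key observation is that $\IE_u \cap q^{-1}(\GGo) = q^{-1}(q(u)) = \T \cdot u$: indeed, if $\zeta \in \IE_u$ satisfies $q(\zeta) \in \GGo$, then $q(\zeta)$ is a unit with $r(q(\zeta)) = q(u)$, forcing $q(\zeta) = q(u)$, and then $\zeta \in q^{-1}(q(u)) = i(\{q(u)\} \times \T) = \T \cdot u$ by \cref{def: twist}\labelcref{cond: exactness}. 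Consequently, for $\zeta \in \IE_u \setminus (\T \cdot u)$ we have $\zeta \notin \supp(g)$, and hence $g(\zeta) = 0 = \delta_u^\T(\zeta)$, while for $\zeta = w \cdot u$ with $w \in \T$, the $\T$-equivariance of $g$ yields $g(\zeta) = w \, g(u) = w = \delta_u^\T(\zeta)$. Combining these two cases gives $g\restr{\IE_u} = \delta_u^\T$, completing the proof.

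There is no substantive obstacle here: the corollary is essentially just a repackaging of \cref{thm: quotient maps}, with the only new content being the identification $\IE_u \cap q^{-1}(\GGo) = \T \cdot u$, which is immediate from the fact that the only unit in the isotropy at $q(u)$ is $q(u)$ itself.
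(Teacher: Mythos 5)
Your proposal is correct and follows essentially the same route as the paper: surjectivity comes from factoring $Q_u^r$ (resp.\ $Q_u$) through the canonical quotient map and the surjection $\theta_u^r$ (resp.\ $\theta_u$), and the identity $g\restr{\IE_u} = \delta^\T_u$ is verified by exactly the same two-case argument, splitting $\IE_u$ into $q^{-1}(\GGo) \cap \IE_u = \T \cdot u$ and its complement and using $\T$-equivariance on the first piece.
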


\begin{proof}
We will only prove part~\cref{item: reduced quotient map on unit space algebra}, as the proof of part~\cref{item: full quotient map on unit space algebra} is identical. The map $Q_u^r$ is a surjective $*$-homomorphism because it is the composition of $\theta_u^r$ and the quotient map from $C_r^*(\IG;\IE)$ to $C_r^*(\IG;\IE) / J_u^r$. Suppose that $g \in \Sigma_c(\IG;\IE)$ satisfies $q(\supp(g)) \subseteq \GGo$ and $g(u) = 1$. By the definition of $Q_u^r$ and $\theta_u^r$, we have $Q_u^r(g) = \theta_u^r(g + J_u^r) = g\restr{\IE_u}$. So we must show that $g\restr{\IE_u} = \delta^\T_u$. Fix $\varepsilon \in \IE_u$. If $\varepsilon \in q^{-1}(\GGo)$, then $q(\varepsilon) \in \GGo \cap \IG_{q(u)}$, and so $q(\varepsilon) = q(u)$. Thus $\varepsilon = z_\varepsilon \cdot u = i(q(u), z_\varepsilon)$ for some unique $z_\varepsilon \in \T$, and so since $g$ is $\T$-equivariant, we have $g(\varepsilon) = g(z_\varepsilon \cdot u) = z_\varepsilon \, g(u) = z_\varepsilon = \delta^\T_u(\varepsilon)$. If $\varepsilon \notin q^{-1}(\GGo)$, then $\varepsilon \ne z \cdot u$ for all $z \in \T$, and so $g(\varepsilon) = 0 = \delta^\T_u(\varepsilon)$. Therefore, $g\restr{\IE_u} = \delta^\T_u$, as required.
\end{proof}

In our proof of \cref{thm: quotient maps}\cref{item: full quotient map}, we used the universal property of $C^*(\IG_{q(u)}; \IE_u)$ given in \cref{thm: universal property} to show that $\theta_u\colon C^*(\IG;\IE) / J_u \to C^*(\IG_{q(u)}; \IE_u)$ is injective, but this argument doesn't work in the reduced setting because the universal property doesn't hold. In fact, somewhat surprisingly, even though $\theta_u$ is always an isomorphism, there exist examples of groupoids for which the map $\theta_u^r\colon C_r^*(\IG;\IE) / J_u^r \to C_r^*(\IG_{q(u)}; \IE_u)$ of \cref{thm: quotient maps}\cref{item: reduced quotient map} is \emph{not} an isomorphism. One such example, due to Willett \cite{Willett2015}, comes from the class of \emph{HLS groupoids} constructed in \cite[Section~2]{HLS2002} (see also \cite[Definition~2.2]{Willett2015}). Before presenting Willett's example, we first recall the construction of an HLS groupoid.

Suppose that $(K_n)_{n \in \N}$ is an approximating sequence for a discrete group $\Gamma$, as defined in \cite[Definition~2.1]{Willett2015}. For each $n \in \N$, define $\Gamma_n \coloneqq \Gamma / K_n$, and $\Gamma_\infty \coloneqq \Gamma$. For each $n \in \N \cup \{\infty\}$, denote the identity of the group $\Gamma_n$ by $e_{\Gamma_n}$. Define
\[
\GG = \bigsqcup_{n \in \N \cup \{\infty\}} \{n\} \times \Gamma_n,
\]
and equip $\GG$ with the groupoid operations coming from the group structure on the fibres over each $n \in \N \cup \{\infty\}$. Then $\GGo = \{ (n,g) \in \GG : g = e_{\Gamma_n} \}$, and for all $(n,g) \in \GG$, we have $r(n,g) = (n,e_{\Gamma_n}) = s(n,g)$. Thus $\Iso(\GG) = \GG$. In \cite[Definition~2.2]{Willett2015}, Willett endows this groupoid $\GG$ with a topology, under which it is a second-countable locally compact Hausdorff \'etale groupoid, called the \hl{HLS groupoid} associated to the approximated group $(\Gamma, (K_n)_{n \in \N})$.

\begin{example} \label{eg: Willett}
Let $F_2$ denote the free group on two generators. For each $n \in \N$, define
\begin{equation} \label{eqn: definition of K_n}
K_n \coloneqq \bigcap \, \{ \ker(\phi) \,:\, \phi\colon F_2 \to \Gamma \text{ is a group homomorphism},\, \lav \Gamma \rav \le n \}.
\end{equation}
By \cite[Lemma~2.8]{Willett2015}, $(K_n)_{n \in \N}$ is an approximating sequence for $F_2$. Since $F_2$ is not amenable, \cite[Lemma~2.4]{Willett2015} shows that the HLS groupoid $\GG$ associated to the approximated group $(F_2, (K_n)_{n \in \N})$ is not amenable. However, by \cite[Lemmas 2.7 and 2.8]{Willett2015}, $C^*(\GG)$ is isomorphic to $C_r^*(\GG)$.
\end{example}

Taking $\EE$ to be the trivial twist $\GG \times \T$ over the HLS groupoid $\GG$ described in \cref{eg: Willett}, and $u \coloneqq \big((\infty, e_{F_2}), 1\big) \in \EEo$, we now prove that the map $\theta_u^r\colon C_r^*(\IG;\IE) / J_u^r \to C_r^*(\IG_{q(u)}; \IE_u)$ of \cref{thm: quotient maps}\cref{item: reduced quotient map} is not injective. Note that since $\EE$ is trivial, we omit it from our notation in \cref{thm: reduced quotient map not injective}, and we identify $\Sigma_c(\IG;\IE)$ with $C_c(\IG)$ and $C_r^*(\IG;\IE)$ with $C_r^*(\IG)$ via the $*$-isomorphism defined in \cref{prop: C*s of twists and 2-cocycles}.

\begin{theorem} \label{thm: reduced quotient map not injective}
For each $n \in \N$, let $K_n$ be defined as in \cref{eqn: definition of K_n}, and let $\GG$ be the HLS groupoid associated to $(F_2, (K_n)_{n\in\N})$, as described in \cref{eg: Willett}. Consider the unit $u \coloneqq (\infty, e_{F_2}) \in \GGo$, where $e_{F_2}$ is the identity element of $F_2$. The map $\theta_u^r\colon C_r^*(\IG) / J_u^r \to C_r^*(\IG_u)$ of \cref{thm: quotient maps}\cref{item: reduced quotient map} is not injective.
\end{theorem}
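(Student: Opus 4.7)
The plan is to leverage the equality $C^*(\GG)=C_r^*(\GG)$ from \cref{eg: Willett} to collapse $J_u$ and $J_u^r$ into a single ideal, identify the quotient C*-algebras, and then use the fact (from \cref{thm: quotient maps}\cref{item: full quotient map}) that the full version $\theta_u$ is always an isomorphism. The composition $\theta_u^r \circ \theta_u^{-1}\colon C^*(F_2)\to C_r^*(F_2)$ will then be forced to be the canonical surjection $\lambda\colon C^*(F_2)\to C_r^*(F_2)$, and non-amenability of $F_2$ will make $\lambda$ non-injective, yielding a non-zero element of $\ker(\theta_u^r)$.

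For the setup, every element of $\GG$ satisfies $r=s$, so $\Iso(\GG)=\GG$, which is open in itself, giving $\IG=\GG$ and $\IG_u=F_2$. Following the convention stated in the paragraph preceding the theorem, I work with the trivial twist $\EE=\GG\times\T$ and identify $\Sigma_c(\IG;\IE)$ with $C_c(\IG)$ via \cref{prop: C*s of twists and 2-cocycles}. Since the canonical surjection $C^*(\GG)\to C_r^*(\GG)$ is an isomorphism by \cref{eg: Willett}, the full and reduced norms agree on $C_c(\GG)$, so the closures of $F_u$ in these norms coincide as subsets of a common C*-algebra. Hence $J_u = J_u^r$ and the quotient C*-algebras $C^*(\GG)/J_u$ and $C_r^*(\GG)/J_u^r$ coincide.

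With these identifications in place, the isomorphism $\theta_u\colon C^*(\GG)/J_u \to C^*(F_2)$ from \cref{thm: quotient maps}\cref{item: full quotient map} lets me define
\[
\Psi \coloneqq \theta_u^r \circ \theta_u^{-1}\colon C^*(F_2)\to C_r^*(F_2).
\]
Given any $g\in C_c(F_2)\subseteq C^*(F_2)$, the surjectivity of $\theta_u$ supplies some $\tilde g\in C_c(\GG)$ with $\tilde g\restr{F_2}=g$; since both $\theta_u$ and $\theta_u^r$ act on cosets coming from $C_c(\GG)$ by the restriction $f\mapsto f\restr{F_2}$, we obtain
\[
\Psi(g) \;=\; \theta_u^r(\tilde g + J_u^r) \;=\; \tilde g\restr{F_2} \;=\; g \,\in\, C_r^*(F_2).
\]
Thus $\Psi$ agrees on the dense subset $C_c(F_2)$ with the canonical surjection $\lambda\colon C^*(F_2)\to C_r^*(F_2)$, and since both maps are bounded $\ast$-homomorphisms, $\Psi=\lambda$ on all of $C^*(F_2)$.

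Since $F_2$ is non-amenable, $\lambda$ has non-trivial kernel, so $\Psi$ does too; because $\theta_u^{-1}$ is a bijection, this forces $\theta_u^r$ itself to fail injectivity. The main obstacle is really only bookkeeping: verifying that $J_u=J_u^r$ as subsets of the common algebra, and that the two definitions of restriction-to-fibre used for $\theta_u$ and $\theta_u^r$ coincide on cosets of elements of $C_c(\GG)$. Both of these reduce immediately to Willett's equality of norms, and no further analysis is required beyond invoking the classical failure of injectivity of $C^*(F_2)\to C_r^*(F_2)$.
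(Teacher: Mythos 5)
Your proposal is correct and follows essentially the same route as the paper: both use Willett's equality $C^*(\GG)=C_r^*(\GG)$ to identify $J_u$ with $J_u^r$ and the two quotients, then factor $\theta_u^r$ through the canonical surjection $C^*(F_2)\to C_r^*(F_2)$ via the isomorphism $\theta_u$, and conclude from non-amenability of $F_2$. The only cosmetic difference is that the paper carries the identification as an explicit isomorphism $\Upsilon_u$ of the quotients rather than literally identifying them.
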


\begin{proof}
Define $\Gamma \coloneqq F_2$. Since $\Iso(\GG) = \GG$, we have $\IG_u = \GG_u^u = \{\infty\} \times \Gamma_\infty \cong \Gamma$. Let $\Upsilon\colon C^*(\IG) \to C_r^*(\IG)$ be the unique homomorphism that restricts to the identity map on $C_c(\IG)$. By \cite[Lemmas 2.7 and 2.8]{Willett2015} and \cite[Corollary~3.3.23]{Armstrong2019}, $\Upsilon$ is an isomorphism, and so the full and reduced C*-norms coincide on $C_c(\IG)$. Thus $\Upsilon\restr{J_u}\colon J_u \to J_u^r$ is an isomorphism. Observe that for all $a, b \in C^*(\IG)$ with $a - b \in J_u$, we have $\Upsilon(a) - \Upsilon(b) = \Upsilon(a - b) \in J_u^r$. Hence there is a map $\Upsilon_u\colon C^*(\IG) / J_u \to C_r^*(\IG) / J_u^r$ satisfying $\Upsilon_u(a + J_u) = \Upsilon(a) + J_u^r$, for all $a \in C^*(\IG)$. Since $\Upsilon\colon C^*(\IG) \to C_r^*(\IG)$ and $\Upsilon\restr{J_u}\colon J_u \to J_u^r$ are isomorphisms, it is clear that $\Upsilon_u$ is also an isomorphism.

Let $\Xi_u\colon C^*(\IG_u) \to C_r^*(\IG_u)$ be the unique homomorphism that restricts to the identity map on $C_c(\IG_u)$. Since $\Gamma = F_2$ is a nonamenable group, we know that $\Xi_u$ is not injective. For all $f \in C_c(\IG_u)$, we have
\[
\theta_u^r(\Upsilon_u(f + J_u)) = \theta_u^r(\Upsilon(f) + J_u^r) = \theta_u^r(f + J_u^r) = f\restr{\IG_u} = \Xi_u(f\restr{\IG_u}) = \Xi_u(\theta_u(f + J_u)).
\]
Therefore, the maps $\theta_u^r \circ \Upsilon_u$ and $\Xi_u \circ \theta_u$ agree on the set $\{ f + J_u : f \in C_c(\IG) \}$, which is a dense subspace of $C^*(\IG) / J_u$. Since these maps are homomorphisms of C*-algebras, they are continuous, and hence they agree on all of $C^*(\IG) / J_u$. Therefore, $\theta_u^r = \Xi_u \circ \theta_u \circ \Upsilon_u^{-1}$. Since $\theta_u \circ \Upsilon_u^{-1}$ is surjective and $\Xi_u$ is not injective, it follows that $\theta_u^r$ is not injective.
\end{proof}

\section{Unique state extensions}
\label{sec: states}

In this short section we provide a sufficient \emph{compressibility} condition under which a (pure) state of a C*-subalgebra $B$ of a (not necessarily unital) C*-algebra $A$ has a unique (pure) state extension to $A$. This result is an extension of a well known result of Anderson about unital C*-algebras (proved in the paragraph preceding \cite[Theorem~3.2]{Anderson1979}), but we were unable to find a proof of it in the literature, and so we present one here.

We begin by introducing some notation.

\begin{notation} \label{notation: state sets}
Given a state $\phi$ of a C*-algebra $A$, we define
\[
\MM_\phi \coloneqq \{ a \in A : \phi(ax) = \phi(xa) = \phi(a)\phi(x) \text{ for all } x \in A \}
\]
and
\[
\UU_\phi \coloneqq \{ a \in A : \lav \phi(a) \rav = \lv a \rv = 1 \}.
\]
\end{notation}

\begin{remark} \label{rem: U_phi subseteq M_phi}
At the bottom of page~304 of \cite{Anderson1979}, Anderson observes that if $\phi$ is a state of a unital C*-algebra $A$, then $\UU_\phi \subseteq \MM_\phi$. In fact, the same is true when $A$ is nonunital. To see this, let $\phi^+$ be the unique state extension of $\phi$ to the minimal unitisation $A^+$ of $A$. Then for all $a \in \UU_\phi$, we have $(a,0) \in \UU_{\phi^+} \subseteq \MM_{\phi^+}$, and it follows that $a \in \MM_\phi$.
\end{remark}

We now recall Anderson's compressibility condition given in \cite[Section~3]{Anderson1979}.

\begin{definition} \label{def: compressible}
Let $A$ be a C*-algebra and let $B$ be a C*-subalgebra of $A$. Suppose that $\phi$ is a state of $B$. We say that $A$ is \hl{$B$-compressible modulo $\phi$} if, for each $a \in A$ and $\epsilon > 0$, there exists $b \in \UU_\phi \subseteq B$ and $c \in B$ such that $b \ge 0$ and $\lv bab - c \rv < \epsilon$.
\end{definition}

We thank the anonymous referee for providing a more elegant proof of the following theorem than the one that appeared in the initial preprint of this article.

\begin{thm} \label{thm: unique state extensions}
Let $A$ be a C*-algebra and let $B$ be a C*-subalgebra of $A$. If $\phi$ is a state of $B$ such that $A$ is $B$-compressible modulo $\phi$, then $\phi$ has a unique state extension to $A$. If $\phi$ is a pure state, then so is its unique extension.
\end{thm}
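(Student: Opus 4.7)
The plan is to dispatch existence by Hahn--Banach and then concentrate on uniqueness, which is the real content. Existence is standard: any state $\phi$ of a C*-subalgebra $B$ of $A$ extends to a state of $A$ (extend as a norm-one bounded linear functional via Hahn--Banach, and use that a bounded linear functional on a C*-algebra whose norm equals its limit along an approximate identity is automatically positive, and hence a state).

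For uniqueness, let $\psi_1$ and $\psi_2$ be state extensions of $\phi$ to $A$, and fix $a \in A$ together with $\epsilon > 0$. The compressibility hypothesis produces $b \in \UU_\phi$ with $b \ge 0$ and an element $c \in B$ satisfying $\lv bab - c \rv < \epsilon$. The technical heart of the proof is to show that \emph{every} state extension $\psi$ of $\phi$ satisfies $\psi(bab) = \psi(a)$. To establish this, pass to the unitization $\tilde{A}$ (the argument is trivial if $A$ is already unital) and extend $\psi$ in the standard way to a state $\tilde\psi$ of $\tilde{A}$. Positivity of $b$ together with $\lav \phi(b) \rav = \lv b \rv = 1$ forces $\phi(b) = 1$ and $0 \le b \le 1$ in $\tilde{A}$, so that $(1-b)^2 \le 1 - b$. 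Consequently $\tilde\psi\big((1-b)^2\big) \le \tilde\psi(1-b) = 1 - \phi(b) = 0$, and the Cauchy--Schwarz inequality for the state $\tilde\psi$ yields $\tilde\psi\big(x(1-b)\big) = 0$ for every $x \in \tilde{A}$. Hence $\psi(xb) = \psi(x)$ for all $x \in A$, and a symmetric argument gives $\psi(bx) = \psi(x)$. Applying these identities twice recovers $\psi(bab) = \psi(ba) = \psi(a)$, as required.

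Uniqueness now follows quickly. For $i = 1, 2$ we have $\psi_i(a) = \psi_i(bab)$ by the identity just proved, while $\psi_i(c) = \phi(c)$ because $c \in B$. Hence
\[
\lav \psi_i(a) - \phi(c) \rav \,=\, \lav \psi_i(bab - c) \rav \,\le\, \lv bab - c \rv \,<\, \epsilon,
\]
which gives $\lav \psi_1(a) - \psi_2(a) \rav < 2\epsilon$; since $\epsilon$ and $a$ were arbitrary, $\psi_1 = \psi_2$. Purity of the unique extension $\psi$ is then a short convexity argument: if $\psi = t \psi_1' + (1-t) \psi_2'$ for states $\psi_1', \psi_2'$ of $A$ and $t \in (0,1)$, then restriction to $B$ expresses $\phi$ as the convex combination $t \, \psi_1'|_B + (1-t) \, \psi_2'|_B$, and purity of $\phi$ forces $\psi_i'|_B = \phi$; the uniqueness just established then forces $\psi_i' = \psi$.

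The main obstacle is the Cauchy--Schwarz/Kadison--Schwarz step in the middle paragraph: Anderson's original argument takes place in a unital algebra, and the delicate point is confirming that the inequality $(1-b)^2 \le 1 - b$ for positive contractions, combined with passage to the unitization, is exactly what forces the extension $\tilde\psi$ to vanish on products with $1 - b$. Once this is in place, everything else is bookkeeping.
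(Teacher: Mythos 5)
Your proof is correct and follows essentially the same route as the paper: the key step in both is to show that any state extension $\psi$ of $\phi$ is multiplicative on the compressing element $b$ (so that $\psi(bab)=\psi(a)$), via a Cauchy--Schwarz argument in the unitization showing $1-b$ lies in the left kernel of the extended state, and then to compare extensions through the approximant $c \in B$. The only cosmetic difference is that you exploit positivity of $b$ directly through $(1-b)^2 \le 1-b$, whereas the paper proves the slightly more general lemma $\UU_\phi \subseteq \MM_\phi$ using $\phi(b^*b) \le \lv b\rv^2 = \lav\phi(b)\rav^2$.
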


\begin{proof}
Let $\phi$ be a state of $B$ such that $A$ is $B$-compressible modulo $\phi$. Let $B^+$ denote the minimal unitisation of $B$, and let $\phi^+$ be the unique state extension of $\phi$ to $B^+$, which is given by $\phi^+(b,\lambda) \coloneqq \phi(b) + \lambda$ for all $(b,\lambda) \in B \times \C$. We claim that $A^+$ is $B^+$-compressible modulo $\phi^+$. To see this, fix $(a,\lambda) \in A^+$ and $\epsilon > 0$. Then $a \in A$, and since $A$ is $B$-compressible modulo $\phi$, there exists $b \in \UU_\phi \subseteq B$ and $c \in B$ such that $b \ge 0$ and $\lv bab - c \rv < \epsilon$. Let $b' \coloneqq (b,0) \in B^+$ and $c' \coloneqq (\lambda b^2 + c, 0) \in B^+$. Then $b' \in \UU_{\phi^+}$ and $b' \ge 0$, and we have
\[
\lv b' (a,\lambda) b' - c' \rv = \lv (bab + \lambda b^2, 0) - (\lambda b^2 + c, 0) \rv = \lv (bab - c, 0) \rv = \lv bab - c \rv < \epsilon,
\]
as claimed. Thus \cite[Theorem~3.2]{Anderson1979} implies that $\phi^+$ extends uniquely to a state $\overline{\phi^+}$ of $A^+$. By identifying $A$ and $B$ with their images under the isometric inclusions into $A^+$ and $B^+$, respectively, we obtain a state $\overline{\phi} \coloneqq \overline{\phi^+}\restr{A}$ of $A$ that satisfies $\overline{\phi}\restr{B} = \phi^+\restr{B} = \phi$.

To see that $\overline{\phi}$ is unique, suppose that $\psi$ is a state of $A$ that satisfies $\psi\restr{B} = \phi$. Since $\overline{\phi^+}$ is the unique state extension of $\phi$ to $A^+$, it must also be the unique state extension of $\psi$ to $A^+$, and hence $\psi = \phi$.

Finally, if $\phi$ is a pure state of $B$, then \cite[II.6.3.2]{Blackadar2006} implies that the unique state extension of $\phi$ to $A$ is also a pure state.
\end{proof}

\section{A uniqueness theorem for reduced twisted groupoid \texorpdfstring{C*}{C*}-algebras}
\label{sec: uniqueness}

In this section we prove that there is an embedding $\iota_r$ of the twisted C*-algebra $C_r^*(\IG;\IE)$ associated to the interior of the isotropy of a Hausdorff \'etale groupoid $\GG$ into $C_r^*(\GG;\EE)$ (see \cref{prop: injective homo}). We use this result to prove our uniqueness theorem (\cref{thm: uniqueness theorem}), which states that a C*-homomorphism $\Psi$ of $C_r^*(\GG;\EE)$ is injective if and only if $\Psi \circ \iota_r$ is injective. We then use our uniqueness theorem to prove \cref{cor: simplicity}, which states that if $\GG$ is effective, then $C_r^*(\GG;\EE)$ is simple if and only if $\GG$ is minimal. With the exception of \cref{cor: simplicity}, the results in this section are extensions of the results in \cite[Section~5.3]{Armstrong2019} to the setting of C*-algebras of groupoid twists.

\begin{prop} \label{prop: injective homo}
Let $(\EE,i,q)$ be a twist over a Hausdorff \'etale groupoid $\GG$. There is a homomorphism $\iota\colon C^*(\IG;\IE) \to C^*(\GG;\EE)$ such that
\[
\iota(f)(\varepsilon) =
\begin{cases}
f(\varepsilon) & \text{if } \varepsilon \in \IE \\
0 & \text{if } \varepsilon \notin \IE
\end{cases}
\]
for all $f \in \Sigma_c(\IG;\IE)$ and $\varepsilon \in \EE$. We have $\iota\big(\Sigma_c(\IG;\IE)\big) \subseteq \Sigma_c(\GG;\EE)$, and $\iota$ descends to an injective homomorphism $\iota_r\colon C_r^*(\IG;\IE) \to C_r^*(\GG;\EE)$. If $\IG$ is amenable, then $\iota$ is also injective.
\end{prop}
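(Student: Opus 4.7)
The plan is to first define $\iota$ at the level of $\Sigma_c(\IG;\IE)$ by extension by zero and check this is a $*$-homomorphism, then extend to the full and reduced completions, and finally verify isometry in the reduced setting (plus full-norm injectivity under amenability).

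First I would define $\iota_0\colon \Sigma_c(\IG;\IE) \to \Sigma_c(\GG;\EE)$ by setting $\iota_0(f)(\varepsilon) = f(\varepsilon)$ if $\varepsilon \in \IE$ and $\iota_0(f)(\varepsilon) = 0$ otherwise. Since $\IG$ is an open subgroupoid of $\GG$, \cref{cor: specific subgroupoid twists}\cref{item: isotropy subgroupoid twist} gives that $\IE = q^{-1}(\IG)$ is open in $\EE$; the support $\supp(f)$ is a compact subset of the open set $\IE$, so a standard Urysohn-type argument shows $\iota_0(f)$ is continuous on $\EE$, and $\T$-equivariance is immediate. To check $\iota_0$ is a $*$-homomorphism I will use \cref{eqn: convolution as a finite sum}: for $f,g \in \Sigma_c(\IG;\IE)$ and $\varepsilon \in \EE$, any summand contributing to $(\iota_0(f)\iota_0(g))(\varepsilon)$ requires $P(\alpha) \in \IE$ and $\varepsilon P(\alpha)^{-1} \in \IE$, which forces $\varepsilon \in \IE \cdot \IE \subseteq \IE$; so the product vanishes off $\IE$, and on $\IE$ it coincides with $\iota_0(fg)$. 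Involution is handled similarly using that $\IE$ is closed under inversion. Then \cref{lemma: *-hom extension to full C*} produces the desired homomorphism $\iota\colon C^*(\IG;\IE) \to C^*(\GG;\EE)$.

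For the reduced version, I will show that each regular representation $\pi_u$ of $\Sigma_c(\GG;\EE)$ dominates and is dominated by regular representations of $\Sigma_c(\IG;\IE)$ applied via $\iota_0$. Fix $u \in \EEo$. For each $\IE$-orbit $[\varepsilon_0] = \IE \cdot \varepsilon_0 \subseteq \EE_u$ with representative $\varepsilon_0$ (so $r(\varepsilon_0) = v$ for some $v \in \EEo$), define an isometry $T_{\varepsilon_0}\colon L^2(\IG_{q(v)};\IE_v) \to L^2(\GG_{q(u)};\EE_u)$ by $T_{\varepsilon_0}(\xi)(\zeta) = \xi(\zeta\varepsilon_0^{-1})$ on $\IE \cdot \varepsilon_0$ and zero elsewhere (this is well defined and isometric by the orthonormal basis description in \cref{prop: ONB for L^2}). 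Using \cref{lemma: pi_u(f)delta} and tracking which terms survive after extension by zero, I will verify the intertwining identity $\pi_u(\iota_0(f)) \, T_{\varepsilon_0} = T_{\varepsilon_0} \, \pi_v^\II(f)$ on the orthonormal basis $\{\delta^\T_{S_v(\alpha)} : \alpha \in \IG_{q(v)}\}$. Choosing a full set of orbit representatives decomposes $L^2(\GG_{q(u)};\EE_u)$ orthogonally as $\bigoplus T_{\varepsilon_0}\!\left(L^2(\IG_{q(v)};\IE_v)\right)$, and each summand is invariant under $\pi_u(\iota_0(f))$. This yields
\[
\lv \pi_u(\iota_0(f)) \rv = \sup_{[\varepsilon_0]} \big\lv \pi^\II_{r(\varepsilon_0)}(f) \big\rv \le \lv f \rv_r.
\]
Taking the supremum over $u \in \EEo$ gives $\lv \iota_0(f) \rv_r \le \lv f \rv_r$, so $\iota_0$ descends to $\iota_r\colon C_r^*(\IG;\IE) \to C_r^*(\GG;\EE)$. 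Taking the orbit $[u]$ (so $\varepsilon_0 = u$ and $v = u$) shows $\lv \pi^\II_u(f) \rv \le \lv \pi_u(\iota_0(f)) \rv$ for every $u$, hence $\lv f \rv_r \le \lv \iota_0(f) \rv_r$, proving $\iota_r$ is isometric and thus injective.

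The main obstacle is the orbit-decomposition argument in the previous paragraph; everything else is routine. Finally, for the amenable case: if $\IG$ is amenable, the full and reduced norms agree on $\Sigma_c(\IG;\IE)$, so for $f \in \Sigma_c(\IG;\IE)$ we have $\lv f \rv = \lv f \rv_r = \lv \iota_0(f) \rv_r \le \lv \iota_0(f) \rv \le \lv f \rv$, forcing equality throughout. Density of $\Sigma_c(\IG;\IE)$ in $C^*(\IG;\IE)$ then gives that $\iota$ is isometric, hence injective.
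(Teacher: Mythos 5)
Your proposal is correct, and the core of it -- the definition of $\iota_0$ by extension by zero, the verification that convolution and involution are respected because $\IE$ is an open subgroupoid, the extension to the full completion via \cref{lemma: *-hom extension to full C*}, and above all the decomposition of $L^2(\GG_{q(u)};\EE_u)$ into summands indexed by the cosets of the isotropy in $\EE_u$ with intertwining isometries implementing $\pi_u(\iota_0(f))$ as a direct sum of the representations $\pi^\II_{r(\varepsilon_0)}(f)$ -- is the same argument as the paper's. (Your orbits $\IE\cdot\varepsilon_0$ coincide with the paper's right cosets $\varepsilon_0\IE_u$ by normality of the interior of the isotropy, i.e.\ $\varepsilon_0^{-1}\IE_{r(\varepsilon_0)}\varepsilon_0 = \IE_u$; your left-action formulation makes the invariance of each summand slightly more transparent, while the paper's right-coset formulation invokes this normality explicitly when proving invariance.) The one place where you genuinely diverge is the amenable case: the paper proves injectivity of $\iota$ by showing $\iota$ is injective on the diagonal subalgebra $D^\II$, checking that $\iota$ intertwines the conditional expectations $\Phi_0$ and $\Phi_0^\II$, and using faithfulness of $\Phi_0^\II$ (which holds since amenability makes the full and reduced norms agree on $\Sigma_c(\IG;\IE)$). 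Your argument is shorter and avoids the expectations entirely: the chain $\lv f\rv = \lv f\rv_r = \lv\iota_0(f)\rv_r \le \lv\iota_0(f)\rv \le \lv f\rv$ for $f\in\Sigma_c(\IG;\IE)$ is valid (the first equality is amenability, the second is the isometry you just proved, the last is contractivity of the C*-homomorphism $\iota$), and isometry on a dense $*$-subalgebra passes to the completion by continuity. Both routes are sound; yours buys brevity, while the paper's conditional-expectation argument is the more standard template and reuses machinery set up elsewhere in the paper.
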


\begin{proof}
A standard argument shows that $\iota\restr{\Sigma_c(\IG;\IE)}$ is a $*$-homomorphism with image contained in $\Sigma_c(\GG;\EE)$, and hence \cref{lemma: *-hom extension to full C*} shows that it extends uniquely to a C*-homomorphism $\iota\colon C^*(\IG;\IE) \to C^*(\GG;\EE)$. Since $\IE$ is an open subgroupoid of $\GG$ and $\IE = q^{-1}(\IG)$ by the proof of \cref{cor: specific subgroupoid twists}\cref{item: isotropy subgroupoid twist}, \cite[Lemma~2.7]{BFPR2021} implies that $\iota$ descends to an injective homomorphism $\iota_r\colon C_r^*(\IG;\IE) \to C_r^*(\GG;\EE)$. For the final claim, suppose that $\IG$ is amenable, and fix $f \in \Sigma_c(\IG;\IE)$. Using \cite[Theorem~11.1.11]{Sims2020} for the first equality and that $\iota_r$ is isometric for the second equality, we see that
\[
\lv f \rv = \lv f \rv_r = \lv \iota_r(f) \rv_r = \lv \iota(f) \rv_r \le \lv \iota(f) \rv \le \lv f \rv,
\]
and hence $\iota$ is injective.
\end{proof}

We now prove that if $\IG$ is closed, then the map that restricts functions in $\Sigma_c(\GG;\EE)$ to $\IE$ extends to a conditional expectation from $C_r^*(\GG;\EE)$ to $\iota_r\big(C_r^*(\IG;\IE)\big)$, and that it also extends to a conditional expectation from $C^*(\GG;\EE)$ to $\iota\big(C^*(\IG;\IE)\big)$ if $\IG$ is amenable.

Although we do not actually use this result in this article, we include it here as we believe it could have various future applications; for example, for constructing KK-classes that may be valuable in K-theory calculations or noncommutative geometry. Moreover, the result has already been used in \cite[Section~6]{ABS2022} in order to prove a simplicity characterisation for twisted C*-algebras of Deaconu--Renault groupoids.

Note that the hypothesis of $\IG$ being closed in $\GG$ is indeed necessary, because there do exist Hausdorff \'etale groupoids for which this is not the case (see, for instance, \cite[Example~4.7]{BNRSW2016}), and because \cite[Lemma~3.4]{BEFPR2021} shows that if $\IG$ is not closed, then there is no conditional expectation $C_r^*(\GG;\EE) \to \iota_r\big(C_r^*(\IG;\IE)\big)$.

\begin{lemma}
Let $\GG$ be a Hausdorff \'etale groupoid such that $\IG$ is closed in $\GG$, and let $(\EE,i,q)$ be a twist over $\GG$. For all $f \in \Sigma_c(\GG;\EE)$, we have $f\restr{\IE} \in \Sigma_c(\IG;\IE)$.
\begin{enumerate}[label=(\alph*)]
\item \label{item: reduced conditional expectation} Let $\iota_r\colon C_r^*(\IG;\IE) \to C_r^*(\GG;\EE)$ be the homomorphism of \cref{prop: injective homo}. There is a conditional expectation $\Psi_r^\II\colon C_r^*(\GG;\EE) \to \iota_r\big(C_r^*(\IG;\IE)\big)$ satisfying $\Psi_r^\II(f) = \iota_r(f\restr{\IE})$ for all $f \in \Sigma_c(\GG;\EE)$, and $\Psi_r^\II \circ \iota_r = \iota_r$.
\item \label{item: full conditional expectation} Let $\iota\colon C^*(\IG;\IE) \to C^*(\GG;\EE)$ be the homomorphism of \cref{prop: injective homo}. If $\IG$ is amenable, then there is a conditional expectation $\Psi^\II\colon C^*(\GG;\EE) \to \iota\big(C^*(\IG;\IE)\big)$ satisfying $\Psi^\II(f) = \iota(f\restr{\IE})$ for all $f \in \Sigma_c(\GG;\EE)$, and $\Psi^\II \circ \iota = \iota$.
\end{enumerate}
\end{lemma}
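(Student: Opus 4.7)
The plan is as follows. First I would observe that $\IG$ is both open (as the interior of isotropy) and closed by hypothesis, hence clopen in $\GG$, so by \cref{lemma: properties of twist maps}\cref{item: q is an open quotient map} the preimage $\IE = q^{-1}(\IG)$ is clopen in $\EE$. For any $f \in \Sigma_c(\GG;\EE)$ this makes $f\restr{\IE}$ continuous with compact support $\supp(f) \cap \IE$, and $\T$-equivariance passes directly to the restriction, so $f\restr{\IE} \in \Sigma_c(\IG;\IE)$.

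The heart of part \cref{item: reduced conditional expectation} is the norm estimate $\lv f\restr{\IE} \rv_r \le \lv f \rv_r$, which I would prove by a compression argument. Fix $u \in \EEo$ and a section $S_u\colon \GG_{q(u)} \to \EE_u$ for $q\restr{\EE_u}$; note that $S_u(\alpha) \in \IE_u$ precisely when $\alpha \in \IG_{q(u)}$, and more generally that for $\eta \in \IE_u$ the product $S_u(\alpha)\eta$ lies in $\IE_u$ iff $\alpha \in \IG_{q(u)}$ (using that $\IE$ is a subgroupoid closed under inverses). Let $P_u$ denote the orthogonal projection of $L^2(\GG_{q(u)};\EE_u)$ onto the closed subspace $L^2(\IG_{q(u)};\IE_u)$, which by \cref{prop: ONB for L^2} has orthonormal basis indexed by $S_u(\IG_{q(u)})$. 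Applying \cref{lemma: pi_u(f)delta} and pruning the sum to those $\alpha$ for which $S_u(\alpha)\eta \in \IE_u$ gives, for each $\eta \in \IE_u$,
\[
P_u \pi_u(f) \delta^\T_\eta \,=\, \sum_{\alpha \in \IG_{q(u)}} f(S_u(\alpha)) \, \delta^\T_{S_u(\alpha)\eta} \,=\, \pi_u^\II(f\restr{\IE}) \, \delta^\T_\eta,
\]
so $P_u \pi_u(f) P_u$, viewed on $L^2(\IG_{q(u)};\IE_u)$, coincides with $\pi_u^\II(f\restr{\IE})$. Hence $\lv \pi_u^\II(f\restr{\IE}) \rv \le \lv \pi_u(f) \rv$, and taking the supremum over $u \in \EEo$ yields the desired estimate.

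Since $\iota_r$ is injective by \cref{prop: injective homo} and therefore isometric, the map $f \mapsto \iota_r(f\restr{\IE})$ is reduced-norm contractive on $\Sigma_c(\GG;\EE)$, so it extends to a bounded linear map $\Psi_r^\II\colon C_r^*(\GG;\EE) \to \iota_r(C_r^*(\IG;\IE))$. For $g \in \Sigma_c(\IG;\IE)$ we have $\iota(g)\restr{\IE} = g$, giving $\Psi_r^\II(\iota_r(g)) = \iota_r(g)$; by continuity this propagates to $\Psi_r^\II \circ \iota_r = \iota_r$, and in particular $\Psi_r^\II$ is an idempotent contraction onto the C*-subalgebra $\iota_r(C_r^*(\IG;\IE))$. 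Tomiyama's theorem then upgrades $\Psi_r^\II$ to a conditional expectation, completing \cref{item: reduced conditional expectation}.

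For \cref{item: full conditional expectation}, amenability of $\IG$ guarantees via \cref{prop: injective homo} that $\iota$ is isometric and that the full and reduced norms agree on $\Sigma_c(\IG;\IE)$. The chain $\lv f\restr{\IE} \rv = \lv f\restr{\IE} \rv_r \le \lv f \rv_r \le \lv f \rv$ then substitutes for the reduced estimate, and the remainder of the argument runs verbatim with full norms in place of reduced ones. The only genuine piece of work is the compression identity displayed above; the rest is a formal consequence of Tomiyama's theorem combined with \cref{prop: injective homo}.
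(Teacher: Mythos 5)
Your proposal is correct and follows essentially the same route as the paper: restriction makes sense because $\IE$ is clopen, the key estimate $\lv f\restr{\IE}\rv_r \le \lv f\rv_r$ comes from identifying $\pi_u^\II(f\restr{\IE})$ with the compression $P\,\pi_u(f)\,P$ by the projection onto $L^2(\IG_{q(u)};\IE_u)$, and the map is then an idempotent contraction onto the subalgebra, hence a conditional expectation by Tomiyama. The only (cosmetic) difference is that you verify the compression identity on the orthonormal basis $\{\delta^\T_\eta\}$ via \cref{lemma: pi_u(f)delta}, whereas the paper computes $\lv P\,\pi_u(f)\,P\rv$ directly as a supremum over unit vectors.
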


\begin{proof}
Since $\IE = q^{-1}(\IG)$ is closed, it follows that for all $f \in \Sigma_c(\GG;\EE)$, we have $f\restr{\IE} \in \Sigma_c(\IG;\IE)$. Since $\IG$ is clopen in $\GG$, part~\cref{item: reduced conditional expectation} follows immediately from \cite[Lemma~3.4]{BEFPR2021}.

For part~\cref{item: full conditional expectation}, we follow the argument used to prove \cite[Proposition~4.1(c)]{BNRSW2016}. Let $M \coloneqq \iota\big(C^*(\IG;\IE)\big)$, and define $\Psi^\II\colon \Sigma_c(\GG;\EE) \to M$ by $\Psi^\II(f) \coloneqq \iota(f\restr{\IE})$. It is clear that $\Psi^\II$ is linear. To see that $\Psi^\II$ is bounded, fix $f \in \Sigma_c(\GG;\EE)$. Since $\IG$ is amenable and $f\restr{\IE} \in \Sigma_c(\IG;\IE)$, \cite[Theorem~11.1.11]{Sims2020} implies that $\lv f\restr{\IE} \rv = \lv f\restr{\IE} \rv_r$. Thus, since $\iota$ and $\iota_r$ are isometric by \cref{prop: injective homo} and since $\Psi_r^\II$ is bounded by part~\cref{item: reduced conditional expectation}, we have
\[
\big\lv \Psi^\II(f) \big\rv = \lv \iota(f\restr{\IE}) \rv = \lv f\restr{\IE} \rv = \lv f\restr{\IE} \rv_r = \lv \iota_r(f\restr{\IE}) \rv_r = \big\lv \Psi_r^\II(f) \big\rv_r \le \lv f \rv_r \le \lv f \rv,
\]
as required. For all $f \in \iota\big(\Sigma_c(\IG;\IE)\big)$, we have $\Psi^\II(f) = \iota(f\restr{\IE}) = f$. Since $\iota\big(\Sigma_c(\IG;\IE)\big)$ is dense in $M$, it follows that $\Psi^\II(f) = f$ for all $f \in M$, and so $\Psi^\II \circ \iota = \iota$. Hence $\Psi^\II$ is a projection, and since $M$ is nontrivial, we have $\lv \Psi^\II \rv \ge 1$. Therefore, \cite[Theorem~II.6.10.2]{Blackadar2006} implies that $\Psi^\II$ is a conditional expectation.
\end{proof}

We now present our main result: a uniqueness theorem for reduced twisted groupoid C*-algebras, which generalises \cite[Theorem~3.1(b)]{BNRSW2016}.

\begin{thm}[C*-uniqueness theorem] \label{thm: uniqueness theorem}
Let $(\EE,i,q)$ be a twist over a Hausdorff \'etale groupoid $\GG$. Let $\iota_r\colon C_r^*(\IG;\IE) \to C_r^*(\GG;\EE)$ be the injective homomorphism of \cref{prop: injective homo}, and define $M_r \coloneqq \iota_r\big(C_r^*(\IG;\IE)\big)$. Suppose that $A$ is a C*-algebra and that $\Psi\colon C_r^*(\GG;\EE) \to A$ is a C*-homomorphism. Then $\Psi$ is injective if and only if $\Psi \circ \iota_r$ is an injective C*-homomorphism of $C_r^*(\IG;\IE)$.
\end{thm}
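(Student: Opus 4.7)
The forward direction is immediate: $\iota_r$ is injective by \cref{prop: injective homo}, so if $\Psi$ is injective, then so is $\Psi \circ \iota_r$. For the converse, I would suppose that $\Psi \circ \iota_r$ is injective. As an injective $*$-homomorphism of C*-algebras it is then isometric, and hence $\Psi\restr{M_r}\colon M_r \to \Psi(M_r)$ is a C*-isomorphism. The plan is to show $\ker\Psi = \{0\}$ by proving that every state $\omega$ of $C_r^*(\GG;\EE)$ factors through $\Psi$ as $\psi \circ \Psi$ for some state $\psi$ of $A$; since the faithful conditional expectation $\Phi_r\colon C_r^*(\GG;\EE) \to D_r$ together with the evaluation pure states of $D_r \cong C_0(\GGo)$ supply enough states to detect any nonzero positive element of $C_r^*(\GG;\EE)$, this will suffice.

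The mechanism for obtaining the factorisation of $\omega$ through $\Psi$ is \cref{thm: unique state extensions}: once $\omega$ is known to be the unique state extension of $\omega\restr{M_r}$, we can push $\omega\restr{M_r}$ through $(\Psi\restr{M_r})^{-1}$ to a state of $\Psi(M_r)$, Hahn--Banach-extend this to a state $\psi$ of $A$, and observe that $\psi \circ \Psi$ is then another state of $C_r^*(\GG;\EE)$ extending $\omega\restr{M_r}$; uniqueness forces $\omega = \psi \circ \Psi$, which visibly vanishes on $\ker\Psi$. The states $\omega$ I plan to use are the vector states $\omega_u(x) \coloneqq \big(\pi_u(x) \, \delta^\T_u \,\mv\, \delta^\T_u\big)$ attached to units $u \in \GGo$, where $\pi_u$ is the regular representation of $C_r^*(\GG;\EE)$ on $L^2(\GG_{q(u)};\EE_u)$ and $\delta^\T_u$ is the vector from \cref{prop: ONB for L^2}. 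A direct computation based on \cref{lemma: pi_u(f)delta} shows that $\omega_u = \ev_u \circ \Phi_r$, so by faithfulness of $\Phi_r$ these states separate nonzero positive elements. To verify $M_r$-compressibility modulo $\omega_u\restr{M_r}$, I would choose the compressing element $b \in \UU_{\omega_u\restr{M_r}}$ of the form $\iota_r(f)$ for a positive $f \in C_c(\GGo)$ with $\lv f \rv = f(u) = 1$ and support in a small neighbourhood of $u$, then decompose $x \in \Sigma_c(\GG;\EE)$ via \cref{lemma: bisection span} into pieces whose $q$-supports lie in bisections of $\GG$, further refined so that each bisection either lies in $\IG$ or is disjoint from $\IG$; the on-isotropy contributions $b x_i b$ lie in $M_r$ and are collected into the element $c$, while for each off-isotropy piece $x_j$ shrinking the support of $b$ forces $\lv b x_j b \rv < \epsilon / N$, because for $\gamma$ in a bisection disjoint from $\IG$ one has $r(\gamma) \ne s(\gamma)$ near $u$.

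The main obstacle will be the off-isotropy compressibility estimate. Since $\IG$ is merely the \emph{interior} of $\Iso(\GG)$, a bisection $B$ that meets $\Iso(\GG)$ need not have $B \cap \IG$ clopen in $B$, and so the refinement step --- reducing to bisections that are either contained in $\IG$ or disjoint from it --- requires a careful covering argument exploiting the fact that $\Iso(\GG) \setminus \IG$ has empty interior, together with Urysohn's lemma applied on the compact set $q(\supp x) {\setminus} \IG$. Moreover, the $\T$-equivariance forced by the twist means that, in place of scalar point-mass functions on $\GG$, one must work throughout with the lifts $\delta^\T_\varepsilon$ and the $\T$-equivariant bump functions provided by \cref{lemma: j_varepsilon} and the local trivialisations of \cref{lemma: CLS bisections and units}; this is the principal source of technical overhead relative to the non-twisted argument of \cite[Theorem~3.1(b)]{BNRSW2016}.
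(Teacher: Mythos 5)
Your overall strategy is sound and close to the paper's: both arguments run through the compressibility criterion of \cref{thm: unique state extensions} applied to the states $\ev_u \circ\, \Phi_r$, and your Hahn--Banach factorisation of $\omega$ through $\Psi$ is a clean substitute for the paper's appeal to \cite[Theorem~3.2]{BNRSW2016} and the attendant pure-state/GNS bookkeeping (the paper instead derives a contradiction from an $\epsilon/2$-estimate on $\Phi_r(a^*a)(u)$). However, there is one genuine gap, and it sits exactly where you flag the ``main obstacle'': the off-isotropy compression estimate is simply false at a general unit $u \in \EEo$. If $\gamma \in \GG_{q(u)}^{q(u)} \setminus \IG$, i.e.\ $\gamma$ lies in the isotropy group over $q(u)$ but not in the interior of the isotropy, then $\gamma$ lies in a bisection disjoint from $\IG$ and yet satisfies $r(\gamma) = s(\gamma) = q(u)$, so $\gamma \in V\gamma V$ for \emph{every} neighbourhood $V$ of $q(u)$; shrinking the support of $b$ around $u$ therefore never makes $\lv b x_j b \rv$ small for a piece $x_j$ whose $q$-support contains such a $\gamma$. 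Your justification (``for $\gamma$ in a bisection disjoint from $\IG$ one has $r(\gamma) \ne s(\gamma)$ near $u$'') is precisely the statement $\GG_{q(u)}^{q(u)} = \IG_{q(u)}$, which fails at some units in general. The difficulty is not the refinement of the bisection cover --- no such refinement is needed, and none exists in general since $\IG$ need not be closed --- but the choice of units at which to compress.

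The repair is the one the paper makes: restrict to $u \in \XE \coloneqq \{ u \in \EEo : \EE_u^u = \IE_u \}$. For such $u$ the compression does work (\cref{lemma: element for compression}, following \cite[Lemma~3.3(b)]{BNRSW2016}, chooses the neighbourhood $V$ of $q(u)$ so that $(V D V) \medcap q(\supp(f_D)) \subseteq \IG$ for each bisection $D$ in the cover), and $\ev_u \circ\, \Phi_r$ is then indeed the unique state extension of its restriction to $M_r$ (\cref{lemma: ev map circ cond exp}). Your separation argument survives this restriction because $q(\XE) = \{ x \in \GGo : \GG_x^x = \IG_x \}$ is dense in $\GGo$ by \cite[Lemma~3.3(a)]{BNRSW2016} (a Baire-category consequence of second countability and the fact that $\Iso(\GG) \setminus \IG$ has empty interior), and $\Phi_r(a^*a)$ corresponds to a continuous function on $\GGo$, so vanishing at a dense set of units still forces $\Phi_r(a^*a) = 0$ and hence $a = 0$ by faithfulness of $\Phi_r$. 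With that amendment your argument goes through, and the direct factorisation $\omega_u = \psi_u \circ \Psi$ is, if anything, a tidier packaging of the paper's proof.
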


In order to prove \cref{thm: uniqueness theorem}, we need the following preliminary result, which is an extension of \cite[Theorem~3.1(a)]{BNRSW2016} to the twisted setting. Recall that, given C*-algebras $A$ and $B$ and a surjective $*$-homomorphism $Q\colon A \to B$, we say that a state $\psi$ of $A$ \hl{factors through} $B$ if there exists a state $\phi$ of $B$ such that $\psi = \phi \circ Q$. It follows from \cite[Theorems~3.3.1~and~3.3.3]{Murphy1990} that in this setting, a state $\psi$ of $A$ factors through $B$ if and only if $\ker(Q) \subseteq \ker(\psi)$.

\begin{prop} \label{prop: unique extensions of states of M and M_r}
Let $(\EE,i,q)$ be a twist over a Hausdorff \'etale groupoid $\GG$. Let
\[
\iota_r\colon C_r^*(\IG;\IE) \to C_r^*(\GG;\EE) \quad \text{ and } \quad \iota\colon C^*(\IG;\IE) \to C^*(\GG;\EE)
\]
be the homomorphisms of \cref{prop: injective homo}, and define
\[
M_r \coloneqq \iota_r\big(C_r^*(\IG;\IE)\big) \quad \text{ and } \quad M \coloneqq \iota\big(C^*(\IG;\IE)\big).
\]
Suppose that $u \in \EEo$ satisfies $\EE_u^u = \IE_u$.
\begin{enumerate}[label=(\alph*)]
\item \label{item: unique extensions reduced} If $\varphi_r$ is a state of $M_r$ such that $\varphi_r \circ \iota_r$ factors through $C_r^*\big(\GG_{q(u)}^{q(u)};\EE_u^u\big)$, then $\varphi_r$ has a unique state extension to $C_r^*(\GG;\EE)$.
\item If $\varphi$ is a state of $M$ such that $\varphi \circ \iota$ factors through $C^*\big(\GG_{q(u)}^{q(u)};\EE_u^u\big)$, then $\varphi$ has a unique state extension to $C^*(\GG;\EE)$.
\end{enumerate}
\end{prop}

In order to prove \cref{prop: unique extensions of states of M and M_r}, we need the following two preliminary results. The first of these results is a generalisation of \cite[Lemma~3.3(b)]{BNRSW2016} to the twisted setting.

\begin{lemma} \label{lemma: element for compression}
Let $(\EE,i,q)$ be a twist over a Hausdorff \'etale groupoid $\GG$. Suppose that $u \in \EEo$ satisfies $\EE_u^u = \IE_u$. For each $f \in \Sigma_c(\GG;\EE)$, there exists $g \in \Sigma_c(\IG;\IE)$ satisfying $g \ge 0$, $q(\supp(g)) \subseteq \GGo$, $\lv g \rv = \lv g \rv_r = g(u) = 1$, and $\supp(gfg) \subseteq \IE$.
\end{lemma}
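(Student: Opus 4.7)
The strategy is to produce $g$ as an approximate unit ``bump function'' concentrated near $u$ in the unit space $\EEo$, small enough that $gfg$ is forced to be supported on the isotropy twist. The hypothesis $\EE_u^u = \IE_u$ passes down through $q$ to the groupoid level: using that $q$ restricts to a homeomorphism of unit spaces and that $q^{-1}(\IG) = \IE$, one checks that $q(\EE_u^u) = \GG_{q(u)}^{q(u)}$ and $q(\IE_u) = \IG_{q(u)}$, so the hypothesis is equivalent to $\GG_{q(u)}^{q(u)} = \IG_{q(u)}$. The plan is then to find an open neighbourhood $B$ of $q(u)$ in $\GGo$ such that $q(\supp(f)) \medcap r^{-1}(B) \medcap s^{-1}(B) \subseteq \IG$, and to take $g = f_h$ where $h \in C_c(\GGo)$ is a standard bump supported in $B$.

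The construction of $B$ is the main technical step. Define $K \coloneqq q(\supp(f)) {\setminus} \IG$, which is compact because $\supp(f)$ is compact, $q$ is continuous, and $\IG$ is open in $\GG$. For each $\gamma \in K$, the isotropy hypothesis forces $(r(\gamma), s(\gamma)) \ne (q(u), q(u))$: otherwise $\gamma \in \GG_{q(u)}^{q(u)} = \IG_{q(u)} \subseteq \IG$, contradicting $\gamma \in K$. Hence for each $\gamma \in K$ we can use Hausdorffness of $\GGo$ to separate $q(u)$ from $r(\gamma)$ (or, in the alternative case, from $s(\gamma)$) by disjoint open sets $V_\gamma, W_\gamma \subseteq \GGo$, and then $r^{-1}(W_\gamma)$ (respectively $s^{-1}(W_\gamma)$) is an open neighbourhood of $\gamma$ in $\GG$ disjoint from $r^{-1}(V_\gamma)$ (respectively $s^{-1}(V_\gamma)$). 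A finite subcover of $K$ and the intersection of the corresponding $V_\gamma$'s gives the required open neighbourhood $B$ of $q(u)$ in $\GGo$.

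With $B$ in hand, Urysohn's lemma yields $h \in C_c(\GGo)$ with $0 \le h \le 1$, $h(q(u)) = 1$, and $\supp(h) \subseteq B$. Define $g \coloneqq f_h$ via the $*$-isomorphism $C_c(\GGo) \cong D_0$ from \cref{subsec: twisted groupoid C*s}, so $g(i(x,z)) = z\, h(x)$. Then $g \in D_0 \subseteq \Sigma_c(\IG;\IE)$ with $q(\supp(g)) = \supp(h) \subseteq \GGo$, the positivity $g \ge 0$ and the normalisation $g(u) = h(q(u)) = 1$ are immediate, and $\lv g \rv = \lv g \rv_r = \lv h \rv_\infty = 1$ because $q(\supp(g)) \subseteq \GGo$ is a bisection (so the full, reduced, and uniform norms agree, as noted in \cref{subsec: twisted groupoid C*s}).

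It remains to verify $\supp(gfg) \subseteq \IE$. A direct computation using $\T$-equivariance of $f$ and the fact that $g$ is supported on $i(\GGo \times \T)$ shows
\begin{equation*}
(gfg)(\varepsilon) = h(r(\varepsilon))\, f(\varepsilon) \, h(s(\varepsilon)) \quad \text{for all } \varepsilon \in \EE,
\end{equation*}
where $r(\varepsilon), s(\varepsilon) \in \EEo$ are identified with points of $\GGo$ via $q\restr{\EEo}$. Hence $\osupp(gfg) \subseteq \supp(f) \medcap r^{-1}(\supp(h)) \medcap s^{-1}(\supp(h))$, and by continuity of $r$ and $s$ this containment passes to closures. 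Since $\supp(h) \subseteq B$, every $\varepsilon$ in this set has $q(\varepsilon) \in q(\supp(f)) \medcap r^{-1}(B) \medcap s^{-1}(B) \subseteq \IG$ by construction of $B$, so $\varepsilon \in q^{-1}(\IG) = \IE$. The main obstacle is the compactness/separation argument producing $B$; the rest is a routine calculation once the identity $(gfg)(\varepsilon) = h(r(\varepsilon))\, f(\varepsilon)\, h(s(\varepsilon))$ is in place.
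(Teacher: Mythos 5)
Your proof is correct, and the key step is handled by a genuinely different (and arguably cleaner) argument than the one in the paper. The paper first decomposes $f = \sum_{D \in F} f_D$ into finitely many pieces with $q(\supp(f_D))$ contained in open bisections $D$ (via \cref{lemma: bisection span}), and then imports from the non-twisted setting \cite[Lemma~3.3(b)]{BNRSW2016} the existence of neighbourhoods $V_D$ of $q(u)$ with $(V_D\,D\,V_D) \medcap q(\supp(f_D)) \subseteq \IG$; the containment $\supp(gfg) \subseteq \IE$ is then obtained from the support estimate $q(\supp(g f_D g)) \subseteq q(\supp(g))\,q(\supp(f_D))\,q(\supp(g))$. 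You instead work directly with the compact set $K = q(\supp(f)) \setminus \IG$, use the hypothesis $\GG_{q(u)}^{q(u)} = \IG_{q(u)}$ to see that no $\gamma \in K$ has $r(\gamma) = s(\gamma) = q(u)$, and run a Hausdorff separation plus finite-subcover argument to produce a single neighbourhood $B$ of $q(u)$ with $q(\supp(f)) \medcap r^{-1}(B) \medcap s^{-1}(B) \subseteq \IG$; the final containment then follows from the explicit pointwise identity $(gfg)(\varepsilon) = h(r(\varepsilon))\,f(\varepsilon)\,h(s(\varepsilon))$, which is immediate from $\T$-equivariance and the fact that $g$ is supported on $q^{-1}(\GGo)$. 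Your route avoids both the bisection decomposition and the external citation, at the cost of writing out the compactness argument that the paper delegates to \cite{BNRSW2016}; the remainder (Urysohn, the definition $g = f_h$, and the norm computations using that $\GGo$ is a bisection) coincides with the paper's. The only cosmetic point is that $D_0$ as defined in the paper sits inside $\Sigma_c(\GG;\EE)$ rather than $\Sigma_c(\IG;\IE)$, so strictly one should define $g$ on $\IE$ (or identify it with its restriction via $\iota$), exactly as the paper does.
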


\begin{proof}
First observe that since $\EE_u^u = \IE_u$, we have $\GG_{q(u)}^{q(u)} = \IG_{q(u)}$. Fix $f \in \Sigma_c(\GG;\EE)$. By \cref{lemma: bisection span}, we can write $f = \sum_{D \in F} f_D$, where $F$ is a finite collection of open bisections of $\GG$ such that for each $D \in F$, $f_D \in \Sigma_c(\GG;\EE)$ and $q(\supp(f_D)) \subseteq D$. Choose open neighbourhoods $\{ V_D \subseteq \GGo : D \in F \}$ of $q(u)$ as in the proof of \cite[Lemma~3.3(b)]{BNRSW2016}, so that $(V_D \, D \, V_D) \medcap q(\supp(f_D)) \subseteq \IG$ for each $D \in F$. Let $V \coloneqq \medcap_{D \in F} V_D$. Then $V$ is an open neighbourhood of $q(u)$ contained in $\GGo$. Now use Urysohn's lemma to choose $b \in C_c(V)$ such that $b \ge 0$ and $b(q(u)) = \lv b \rv_\infty = 1$. Since $\GGo$ is a bisection of $\GG$, it follows that $\lv b \rv = \lv b \rv_r = \lv b \rv_\infty = 1$ (see, for instance, \cite[Corollary~9.3.4]{Sims2020}). For each $\varepsilon \in q^{-1}(\GGo)$, there are unique elements $v_\varepsilon \in \GGo$ and $z_\varepsilon \in \T$ such that $\varepsilon = i(v_\varepsilon, z_\varepsilon)$. Define $g\colon \IE \to \C$ by
\[
g(\varepsilon) \coloneqq \begin{cases}
z_\varepsilon \, b(v_\varepsilon) & \text{if } \varepsilon \in q^{-1}(\GGo) \\
0 & \text{if } \varepsilon \notin q^{-1}(\GGo).
\end{cases}
\]
It follows immediately from our choice of $b$ and construction of $g$ that $g(u) = b(q(u)) = 1$ and that $q(\supp(g)) \subseteq V \subseteq \GGo$. By \cite[Lemma~11.1.9]{Sims2020}, there is an isomorphism from $C_c(\GGo)$ to $D_0^\II \coloneqq \{ h \in \Sigma_c(\IG;\IE) : q(\supp(h)) \subseteq \GGo \}$ that maps $b$ to $g$, and thus $g \in \Sigma_c(\IG;\IE)$ and $g \ge 0$. By \cite[Theorem~11.1.11]{Sims2020}, this isomorphism extends to (isometric) isomorphisms of $C_0(\GGo)$ onto the full and reduced C*-completions of $D_0^\II$, and therefore, $\lv g \rv = \lv g \rv_r = \lv b \rv_\infty = 1$. Finally, for each $D \in F$, we have
\[
q(\supp(g f_D g)) \subseteq q(\supp(g)) \, q(\supp(f_D)) \, q(\supp(g)) \subseteq (VD\,V) \medcap q(\supp(f_D)) \subseteq \IG
\]
by construction, and since $f = \sum_{D \in F} f_D$, it follows that $\supp(g f g) \subseteq q^{-1}(\IG) = \IE$.
\end{proof}

The next result is an extension of \cite[Lemma~3.5]{BNRSW2016} to the twisted setting.

\begin{lemma} \label{lemma: C*(GG;Sigma) is C*(II;Sigma)-compressible}
Let $(\EE,i,q)$ be a twist over a Hausdorff \'etale groupoid $\GG$. Let
\[
\iota_r\colon C_r^*(\IG;\IE) \to C_r^*(\GG;\EE) \quad \text{ and } \quad \iota\colon C^*(\IG;\IE) \to C^*(\GG;\EE)
\]
be the homomorphisms of \cref{prop: injective homo}, and define
\[
M_r \coloneqq \iota_r\big(C_r^*(\IG;\IE)\big) \quad \text{ and } \quad M \coloneqq \iota\big(C^*(\IG;\IE)\big).
\]
Suppose that $u \in \EEo$ satisfies $\EE_u^u = \IE_u$.
\begin{enumerate}[label=(\alph*)]
\item \label{item: compressible reduced} Fix $\epsilon > 0$ and $a \in C_r^*(\GG;\EE)$. There exist $b, c \in M_r$ satisfying $b \ge 0$, $\lv b \rv_r = 1$, $\lv bab - c \rv_r < \epsilon$, and $\varphi_r(b) = 1$ for every state $\varphi_r$ of $M_r$ such that $\varphi_r \circ \iota_r$ factors through $C_r^*\big(\GG_{q(u)}^{q(u)}; \EE_u^u\big)$. If $a$ is positive, then $c$ can be taken to be positive.
\item Fix $\epsilon > 0$ and $a \in C^*(\GG;\EE)$. There exist $b, c \in M$ satisfying $b \ge 0$, $\lv b \rv = 1$, $\lv bab - c \rv < \epsilon$, and $\varphi(b) = 1$ for every state $\varphi$ of $M$ such that $\varphi \circ \iota$ factors through $C^*\big(\GG_{q(u)}^{q(u)}; \EE_u^u\big)$. If $a$ is positive, then $c$ can be taken to be positive.
\end{enumerate}
\end{lemma}

\begin{proof}
Both parts follow from \cref{cor: quotient map on unit space algebra} in the same way, and so we will only prove part~\cref{item: compressible reduced}. First observe that since $\EE_u^u = \IE_u$, we have $\GG_{q(u)}^{q(u)} = \IG_{q(u)}$. Let
\[
Q_u^r\colon C_r^*(\IG;\IE) \to C_r^*(\IG_{q(u)}; \IE_u) = C_r^*\big(\GG_{q(u)}^{q(u)}; \EE_u^u\big)
\]
be the surjective $*$-homomorphism of \cref{cor: quotient map on unit space algebra}\cref{item: reduced quotient map on unit space algebra}. Since $\Sigma_c(\GG;\EE)$ is dense in $C_r^*(\GG;\EE)$, we can choose $f \in \Sigma_c(\GG;\EE)$ such that $\lv a - f \rv_r < \epsilon$. If $a$ is positive, a standard C*-algebraic argument (see, for instance, \cite[Lemma~5.3.10]{Armstrong2019}) shows that $f$ can also be taken to be positive. Use \cref{lemma: element for compression} to choose $g \in \Sigma_c(\IG;\IE)$ satisfying $g \ge 0$, $q(\supp(g)) \subseteq \GGo$, $\lv g \rv = \lv g \rv_r = g(u) = 1$, and $\supp(gfg) \subseteq \IE$. Define $b \coloneqq \iota_r(g)$ and $c \coloneqq bfb$. Then $\supp(c) \subseteq \IE$, $b, c \in M_r$, $b \ge 0$, and $b(u) = g(u) = 1$. If $f \ge 0$, then it follows that $c \ge 0$. Since $q(\supp(b)) \subseteq \GGo$, \cite[Theorem~11.1.11]{Sims2020} implies that $\lv b \rv = \lv b \rv_r$. Moreover, since \cref{prop: injective homo} implies that $\iota_r$ is isometric, we have $\lv b \rv = \lv b \rv_r = \lv \iota_r(g) \rv_r = \lv g \rv_r = 1$, and hence
\[
\lv bab - c \rv_r = \lv bab - bfb \rv_r \le \lv b \rv_r^2 \, \lv a - f \rv_r = \lv a - f \rv_r < \varepsilon.
\]
Suppose that $\varphi_r$ is a state of $M_r$ such that $\varphi_r \circ \iota_r$ factors through $C_r^*\big(\GG_{q(u)}^{q(u)}; \EE_u^u\big)$. Then there is a state $\psi_r$ of $C_r^*\big(\GG_{q(u)}^{q(u)}; \EE_u^u\big)$ such that $\varphi_r \circ \iota_r = \psi_r \circ Q_u^r$. By \cref{cor: quotient map on unit space algebra}\cref{item: reduced quotient map on unit space algebra}, we have $Q_u^r(g) = \delta^\T_u$, which is the identity element of $C_r^*\big(\GG_{q(u)}^{q(u)}; \EE_u^u\big)$ defined in \cref{prop: delta^T_varepsilon}. Thus, since $\psi_r$ is unital, we have $\varphi_r(b) = \varphi_r(\iota_r(g)) = \psi_r(Q_u^r(g)) = \psi_r(\delta^\T_u) = 1$.
\end{proof}

\begin{proof}[Proof of \cref{prop: unique extensions of states of M and M_r}]
Both parts follow from \cref{lemma: C*(GG;Sigma) is C*(II;Sigma)-compressible} in the same way, and so we will only prove part~\cref{item: unique extensions reduced}. Suppose that $\varphi_r$ is a state of $M_r$ such that $\varphi_r \circ \iota_r$ factors through $C_r^*\big(\GG_{q(u)}^{q(u)};\EE_u^u\big)$. Recalling the terminology defined in \cref{def: compressible}, $C_r^*(\GG;\EE)$ is $M_r$-compressible modulo $\varphi_r$ by \cref{lemma: C*(GG;Sigma) is C*(II;Sigma)-compressible}\cref{item: compressible reduced}, and so by \cref{thm: unique state extensions}, $\varphi_r$ has a unique state extension to $C_r^*(\GG;\EE)$.
\end{proof}

We need the following two additional results in order to prove \cref{thm: uniqueness theorem}.

\begin{lemma} \label{lemma: pure state composition}
Let $(\EE,i,q)$ be a twist over a Hausdorff \'etale groupoid $\GG$. Suppose that $u \in \EEo$ satisfies $\EE_u^u = \IE_u$, and let $Q_u^r\colon C_r^*(\IG;\IE) \to C_r^*(\IG_{q(u)}; \IE_u) = C_r^*\big(\GG_{q(u)}^{q(u)}; \EE_u^u\big)$ be the surjective $*$-homomorphism of \cref{cor: quotient map on unit space algebra}\cref{item: reduced quotient map on unit space algebra}. Let $\iota_r\colon C_r^*(\IG;\IE) \to C_r^*(\GG;\EE)$ be the injective homomorphism of \cref{prop: injective homo}, and define $M_r \coloneqq \iota_r\big(C_r^*(\IG;\IE)\big)$. Let $\phi$ be a state of $C_r^*\big(\GG_{q(u)}^{q(u)}; \EE_u^u\big)$, and define $\psi\colon M_r \to \C$ by $\psi(\iota_r(a)) \coloneqq \phi(Q_u^r(a))$. Then $\psi$ is a state of $M_r$, and $\psi \circ \iota_r$ is a state of $C_r^*(\IG;\IE)$ that factors through $C_r^*\big(\GG_{q(u)}^{q(u)}; \EE_u^u\big)$. If $\phi$ is a pure state, then $\psi$ and $\psi \circ \iota_r$ are also pure states.
\end{lemma}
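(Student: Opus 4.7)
The plan is to verify well-definedness and statehood of $\psi$, observe that the factoring claim is immediate, and then handle purity by descending a convex decomposition through the kernel of $Q_u^r$. Since $\iota_r$ is injective by \cref{prop: injective homo}, it restricts to an isometric $*$-isomorphism from $C_r^*(\IG;\IE)$ onto $M_r$, so the formula $\psi(\iota_r(a)) \coloneqq \phi(Q_u^r(a))$ unambiguously defines a linear functional on $M_r$ satisfying $\psi \circ \iota_r = \phi \circ Q_u^r$. The functional $\phi \circ Q_u^r$ is positive as the composition of the state $\phi$ with the surjective $*$-homomorphism $Q_u^r$ (see \cref{cor: quotient map on unit space algebra}\cref{item: reduced quotient map on unit space algebra}); and taking any approximate identity $(e_\lambda)$ of $C_r^*(\IG;\IE)$, the images $Q_u^r(e_\lambda)$ form an approximate identity of the unital C*-algebra $C_r^*(\IG_{q(u)};\IE_u)$ (unitality coming from \cref{prop: delta^T_varepsilon}), so $\phi(Q_u^r(e_\lambda)) \to 1$ and hence $\lv \psi \circ \iota_r \rv = 1$. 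Transporting through the isometric $*$-isomorphism $\iota_r^{-1}\colon M_r \to C_r^*(\IG;\IE)$ then shows $\psi$ is itself a state of $M_r$. The factoring claim is immediate from $\psi \circ \iota_r = \phi \circ Q_u^r$.

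For purity, I would exploit that $\iota_r$ is a $*$-isomorphism onto $M_r$, so the bijection $\omega \mapsto \omega \circ \iota_r$ between states of $M_r$ and states of $C_r^*(\IG;\IE)$ preserves pure states; it therefore suffices to prove that $\phi \circ Q_u^r$ is pure. Suppose $\phi \circ Q_u^r = t \rho_1 + (1-t) \rho_2$ with $\rho_1, \rho_2$ states of $C_r^*(\IG;\IE)$ and $t \in (0,1)$. The core step is to show that $\rho_1$ and $\rho_2$ each vanish on the closed ideal $\ker Q_u^r$. For positive $k \in \ker Q_u^r$, positivity of the $\rho_i$ together with $t \rho_1(k) + (1-t) \rho_2(k) = \phi(Q_u^r(k)) = 0$ forces $\rho_1(k) = \rho_2(k) = 0$; for general $k \in \ker Q_u^r$, using that $k^* k \in \ker Q_u^r$ and the Cauchy--Schwarz inequality
\[
\lav \rho_i(e_\lambda k) \rav^2 \,\le\, \rho_i(e_\lambda^2) \, \rho_i(k^* k) \,=\, 0
\]
for $(e_\lambda)$ a self-adjoint approximate identity of $C_r^*(\IG;\IE)$, we obtain $\rho_i(k) = \lim_\lambda \rho_i(e_\lambda k) = 0$. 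Each $\rho_i$ therefore descends through the surjection $Q_u^r$ to a positive linear functional $\bar{\rho}_i$ on $C_r^*(\IG_{q(u)};\IE_u)$ with $\rho_i = \bar{\rho}_i \circ Q_u^r$, and an approximate-identity argument analogous to the one above (together with unitality of the target) shows each $\bar{\rho}_i$ is a state. Surjectivity of $Q_u^r$ then yields $\phi = t \bar{\rho}_1 + (1-t) \bar{\rho}_2$, and purity of $\phi$ forces $\bar{\rho}_1 = \bar{\rho}_2 = \phi$, whence $\rho_1 = \rho_2 = \phi \circ Q_u^r$.

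The principal obstacle is precisely the descent step just outlined: by \cref{thm: reduced quotient map not injective}, $\theta_u^r$ need not be injective, so $\ker Q_u^r$ may strictly contain the ideal $J_u^r$, and one cannot shortcut the argument by working inside $C_r^*(\IG;\IE)/J_u^r$. The argument must instead be carried out directly for the closed ideal $\ker Q_u^r$, using only its ideal structure together with Cauchy--Schwarz and an approximate identity. A secondary minor issue is the potential non-unitality of $C_r^*(\IG;\IE)$ and $M_r$, which forces norm calculations for positive functionals to be performed through approximate identities rather than through evaluation at a unit; here the unitality of $C_r^*(\IG_{q(u)};\IE_u)$ provided by \cref{prop: delta^T_varepsilon} is what allows the calculations to terminate cleanly.
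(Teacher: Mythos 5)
Your proposal is correct and follows essentially the same route as the paper: both establish the norm-one property and then prove purity by showing that the components of a convex decomposition of $\phi \circ Q_u^r$ annihilate $\ker Q_u^r$ (via positivity on positive elements of the ideal), descend them through the surjection $Q_u^r$, and invoke purity of $\phi$. Your minor variations — computing norms with approximate identities rather than the explicit element $g$ of \cref{lemma: element for compression}, transporting purity of $\psi$ through the $*$-isomorphism $\iota_r^{-1}\colon M_r \to C_r^*(\IG;\IE)$ instead of repeating the argument, and using Cauchy--Schwarz where the paper notes that the ideal is spanned by its positive elements — are all sound and do not change the substance of the argument.
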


\begin{proof}
Since $Q_u^r$ is a C*-homomorphism and $\phi$ is a state, it is clear that $\psi$ and $\psi \circ \iota_r$ are positive bounded linear functionals. To see that $\psi$ and $\psi \circ \iota_r$ are states, we must show that $\lv \psi \rv = \lv \psi \circ \iota_r \rv = 1$. Using \cref{lemma: element for compression}, we can find $g \in \Sigma_c(\IG;\IE)$ satisfying $q(\supp(g)) \subseteq \GGo$ and $g(u) = 1$, and hence \cref{cor: quotient map on unit space algebra}\cref{item: reduced quotient map on unit space algebra} implies that $Q_u^r(g)$ is the identity element of $C_r^*\big(\GG_{q(u)}^{q(u)}; \EE_u^u\big)$. Therefore, since $\phi$ is a state of $C_r^*\big(\GG_{q(u)}^{q(u)}; \EE_u^u\big)$, we have
\[
\lv \psi \rv \ge \lv \psi \circ \iota_r \rv \ge \lav \psi(\iota_r(g)) \rav = \lav \phi(Q_u^r(g)) \rav = 1,
\]
and so $\lv \psi \rv = \lv \psi \circ \iota_r \rv = 1$. Thus $\psi$ is a state of $M_r$, and $\psi \circ \iota_r = \phi \circ Q_u^r$ is a state of $C_r^*(\IG;\IE)$ that factors through $C_r^*\big(\GG_{q(u)}^{q(u)}; \EE_u^u\big)$.

Suppose now that $\phi$ is a pure state. We claim that $\psi \circ \iota_r$ is a pure state. To see this, suppose that $\psi_1$ and $\psi_2$ are states of $C_r^*(\IG;\IE)$ such that $\psi \circ \iota_r = t\psi_1 + (1-t)\psi_2$ for some $t \in (0,1)$. We must show that $\psi \circ \iota_r = \psi_1 = \psi_2$. We first claim that $\psi_1$ and $\psi_2$ factor through $C_r^*\big(\GG_{q(u)}^{q(u)}; \EE_u^u\big)$. To see this, it suffices to show that $\ker(Q_u^r) \subseteq \ker(\psi_1) \medcap \ker(\psi_2)$. For all $a \in \ker(Q_u^r)$ such that $a \ge 0$, we have $\psi_1(a), \psi_2(a) \ge 0$, and
\[
0 \le t\psi_1(a) + (1-t)\psi_2(a) = \psi(\iota_r(a)) = \phi(Q_u^r(a)) = \phi(0) = 0,
\]
and hence $\psi_1(a) = 0 = \psi_2(a)$. Since the kernel of $Q_u^r$ is a C*-algebra, it is spanned by its positive elements, and so we deduce that $\ker(Q_u^r) \subseteq \ker(\psi_1) \medcap \ker(\psi_2)$. Hence there exist states $\phi_1$ and $\phi_2$ of $C_r^*\big(\GG_{q(u)}^{q(u)}; \EE_u^u\big)$ such that $\psi_1 = \phi_1 \circ Q_u^r$ and $\psi_2 = \phi_2 \circ Q_u^r$. We have
\begin{align*}
\phi \circ Q_u^r = \psi \circ \iota_r &= t\psi_1 + (1-t)\psi_2 \\
&= t(\phi_1 \circ Q_u^r) + (1-t)(\phi_2 \circ Q_u^r) = (t\phi_1 + (1-t)\phi_2) \circ Q_u^r. \numberthis \label{eqn: pure states factoring through}
\end{align*}
Since $Q_u^r$ is surjective, we deduce from \cref{eqn: pure states factoring through} that $\phi = t\phi_1 + (1-t)\phi_2$. Hence $\phi = \phi_1 = \phi_2$, because $\phi$ is a pure state. Therefore,
\[
\psi_1 = \phi_1 \circ Q_u^r = \phi \circ Q_u^r = \psi \circ \iota_r \quad \text{ and } \quad \psi_2 = \phi_2 \circ Q_u^r = \phi \circ Q_u^r = \psi \circ \iota_r ,
\]
and so $\psi \circ \iota_r$ is a pure state. A similar argument shows that $\psi$ is also a pure state.
\end{proof}

Before we present the next result, we recall from \cite[Proposition~4.3]{Renault2008} the existence of the faithful conditional expectations $\Phi_r\colon C_r^*(\GG;\EE) \to C_r^*\big(\GGo;q^{-1}(\GGo)\big) \cong C_0(\GGo)$ and $\Phi_r^\II\colon C_r^*(\IG;\IE) \to C_r^*\big(\GGo;q^{-1}(\GGo)\big) \cong C_0(\GGo)$ extending restriction of functions.

\begin{lemma} \label{lemma: ev map circ cond exp}
Let $(\EE,i,q)$ be a twist over a Hausdorff \'etale groupoid $\GG$. Suppose that $u \in \EEo$ satisfies $\EE_u^u = \IE_u$. Let $\iota_r\colon C_r^*(\IG;\IE) \to C_r^*(\GG;\EE)$ be the injective homomorphism of \cref{prop: injective homo}, and define $M_r \coloneqq \iota_r\big(C_r^*(\IG;\IE)\big)$. Let $\ev_u$ be the evaluation map $f \mapsto f(u)$ on $C_r^*\big(\GGo;q^{-1}(\GGo)\big) \cong C_0(\GGo)$. Then
\begin{enumerate}[label=(\alph*)]
\item \label{item: ev state factors through} $\ev_u \circ\, \Phi_r^\II$ is a state of $C_r^*(\IG;\IE)$ that factors through $C_r^*\big(\GG_{q(u)}^{q(u)}; \EE_u^u\big)$;
\item \label{item: ev state equality} $\ev_u \circ\, (\Phi_r\restr{M_r})$ is a state of $M_r$ that satisfies $\ev_u \circ\, (\Phi_r\restr{M_r}) \circ \iota_r = \ev_u \circ\, \Phi_r^\II$; and
\item \label{item: ev unique state extension} $\ev_u \circ\, \Phi_r$ is the unique state extension of $\ev_u \circ\, (\Phi_r\restr{M_r})$ to $C_r^*(\GG;\EE)$.
\end{enumerate}
\end{lemma}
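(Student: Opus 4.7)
The plan is to chain together a few small observations and then invoke \cref{prop: unique extensions of states of M and M_r}\cref{item: unique extensions reduced} for the uniqueness claim in part~\cref{item: ev unique state extension}.

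I would tackle part~\cref{item: ev state factors through} first. Since $\ev_u$ is a state of $C_0(\GGo) \cong C_r^*(\GGo; q^{-1}(\GGo))$ and $\Phi_r^\II$ is a contractive conditional expectation, the composition $\ev_u \circ \Phi_r^\II$ is automatically positive, linear, and of norm at most $1$. To witness norm $1$, I would apply \cref{lemma: element for compression} to produce $g \in \Sigma_c(\IG;\IE)$ with $g \ge 0$, $q(\supp(g)) \subseteq \GGo$, and $g(u) = \lv g \rv_r = 1$; then $\Phi_r^\II(g) = g\restr{q^{-1}(\GGo)}$ corresponds under the identification with $C_0(\GGo)$ to a function whose value at $q(u)$ is $g(u) = 1$. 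For the factorisation, I would define $\phi$ on $C_r^*(\IG_{q(u)}; \IE_u)$ as the composition of the canonical faithful conditional expectation onto its unit-space subalgebra (which is $*$-isomorphic to $\C$ via evaluation at $u$). For $f \in \Sigma_c(\IG;\IE)$, the quotient map from \cref{cor: quotient map on unit space algebra}\cref{item: reduced quotient map on unit space algebra} gives $Q_u^r(f) = f\restr{\IE_u}$, and unwinding both sides yields $\phi(Q_u^r(f)) = f(u) = \ev_u(\Phi_r^\II(f))$. Continuity then promotes this to the equality $\ev_u \circ \Phi_r^\II = \phi \circ Q_u^r$ on all of $C_r^*(\IG;\IE)$.

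For part~\cref{item: ev state equality}, the critical observation is that $\Phi_r \circ \iota_r = \Phi_r^\II$. On $\Sigma_c(\IG;\IE)$, both sides restrict a function to $q^{-1}(\GGo) \subseteq \IE$, and since $\iota_r$ is extension by zero outside $\IE$, the two restrictions agree; continuity extends the identity to all of $C_r^*(\IG;\IE)$. Composing with $\ev_u$ yields the claimed equality. The same $g$ from part~\cref{item: ev state factors through} witnesses that $\ev_u \circ (\Phi_r\restr{M_r})$ has norm $1$, since $\iota_r(g) \in M_r$ and $\iota_r$ is isometric by \cref{prop: injective homo}.

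For part~\cref{item: ev unique state extension}, an identical argument to the one used for $\Phi_r^\II$ shows that $\ev_u \circ \Phi_r$ is a state of $C_r^*(\GG;\EE)$, and part~\cref{item: ev state equality} immediately identifies its restriction to $M_r$ with $\ev_u \circ (\Phi_r\restr{M_r})$. Combining parts~\cref{item: ev state equality} and~\cref{item: ev state factors through}, the composition $(\ev_u \circ (\Phi_r\restr{M_r})) \circ \iota_r = \ev_u \circ \Phi_r^\II$ factors through $C_r^*(\GG_{q(u)}^{q(u)}; \EE_u^u)$, so \cref{prop: unique extensions of states of M and M_r}\cref{item: unique extensions reduced} applies and $\ev_u \circ (\Phi_r\restr{M_r})$ admits a unique state extension to $C_r^*(\GG;\EE)$, which must therefore be $\ev_u \circ \Phi_r$. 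The only step that requires real care is the identification chain in part~\cref{item: ev state factors through}, verifying that $\phi \circ Q_u^r$ and $\ev_u \circ \Phi_r^\II$ really do compute the same thing via distinct routes; everything else assembles quickly from the preceding results.
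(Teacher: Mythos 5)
Your proposal is correct and follows essentially the same route as the paper: the norm-one witness via \cref{lemma: element for compression}, the factorisation through $C_r^*\big(\GG_{q(u)}^{q(u)};\EE_u^u\big)$ via $Q_u^r$ composed with evaluation on the (singleton) unit-space subalgebra, agreement on the dense subalgebra $\Sigma_c(\IG;\IE)$ for part~(b), and \cref{prop: unique extensions of states of M and M_r} for part~(c). The only cosmetic difference is that you record the slightly stronger identity $\Phi_r \circ \iota_r = \Phi_r^\II$ rather than just the equality of the compositions with $\ev_u$, which is fine.
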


\begin{proof}
It is clear that $\ev_u \circ\, \Phi_r^\II$, $\ev_u \circ\, (\Phi_r\restr{M_r})$, and $\ev_u \circ\, \Phi_r$ are positive bounded linear functionals since they are composed of positive bounded linear maps. To see that they are states, use \cref{lemma: element for compression} to find $g \in \Sigma_c(\IG;\IE)$ such that $g(u) = 1$. Then
\[
\big\lav \Phi_r\restr{M_r}(\iota_r(g))(u) \big\rav = \left\lav \Phi_r^\II(g)(u) \right\rav = \lav g(u) \rav = 1,
\]
and it follows that
\[
\left\lv \ev_u \circ\, \Phi_r^\II \right\rv = \left\lv \ev_u \circ\, (\Phi_r\restr{M_r}) \right\rv = \left\lv \ev_u \circ\, \Phi_r \right\rv = 1.
\]
Thus $\ev_u \circ\, \Phi_r^\II$, $\ev_u \circ\, (\Phi_r\restr{M_r})$, and $\ev_u \circ\, \Phi_r$ are states.

Since $\ev_u \circ\, (\Phi_r\restr{M_r}) \circ \iota_r$ and $\ev_u \circ\, \Phi_r^\II$ agree on $\Sigma_c(\IG;\IE)$, which is dense in $C_r^*(\IG;\IE)$, it follows that $\ev_u \circ\, (\Phi_r\restr{M_r}) \circ \iota_r = \ev_u \circ\, \Phi_r^\II$. Thus part~\cref{item: ev state equality} holds.

For part~\cref{item: ev state factors through}, define $\HH \coloneqq \IG_{q(u)}$ and $\EE_\HH \coloneqq q^{-1}(\HH) = \IE_u$. Let $Q_u^r\colon C_r^*(\IG;\IE) \to C_r^*(\HH;\EE_\HH) = C_r^*\big(\GG_{q(u)}^{q(u)}; \EE_u^u\big)$ be the surjective $*$-homomorphism of \cref{cor: quotient map on unit space algebra}\cref{item: reduced quotient map on unit space algebra}, and let $\Phi_r^\HH\colon C_r^*\big(\HH;\EE_\HH\big) \to C_r^*\big(\HHo;q^{-1}(\HHo)\big) \cong C_0(\HHo)$ be the conditional expectation extending restriction of functions. To see that $\ev_u \circ\, \Phi_r^\II$ factors through $C_r^*(\HH;\EE_\HH)$, we will find a state $\phi_u$ of $C_r^*(\HH;\EE_\HH)$ such that $\phi_u \circ Q_u^r = \ev_u \circ\, \Phi_r^\II$. We have $\HHo = \{q(u)\}$, and by a similar argument to the one above, $\phi_u \coloneqq \ev_u \circ\, \Phi_r^\HH$ is a state of $C_r^*\big(\HH;\EE_\HH\big)$. For all $f \in \Sigma_c(\IG;\IE)$, we have $(\phi_u \circ Q_u^r)(f) = \phi_u\big(f\restr{\IE_u}\big) = f(u) = (\ev_u \circ\, \Phi_r^\II)(f)$. Since $\Sigma_c(\IG;\IE)$ is dense in $C_r^*(\IG;\IE)$, it follows that $\phi_u \circ Q_u^r = \ev_u \circ\, \Phi_r^\II$, as required.

We conclude by proving part~\cref{item: ev unique state extension}. By parts~\cref{item: ev state factors through,item: ev state equality}, $\ev_u \circ\, (\Phi_r\restr{M_r}) \circ \iota_r = \ev_u \circ\, \Phi_r^\II$ is a state of $C_r^*(\IG;\IE)$ that factors through $C_r^*\big(\GG_{q(u)}^{q(u)}; \EE_u^u\big)$. Therefore, \cref{prop: unique extensions of states of M and M_r}\cref{item: unique extensions reduced} implies that $\ev_u \circ\, (\Phi_r\restr{M_r})$ extends uniquely to a state of $C_r^*(\GG;\EE)$. Thus, since $\ev_u \circ\, \Phi_r$ is an extension of $\ev_u \circ\, (\Phi_r\restr{M_r})$ to $C_r^*(\GG;\EE)$, it must be the unique state extension.
\end{proof}

\begin{proof}[Proof of \cref{thm: uniqueness theorem}]
Since $\iota_r$ is injective, it is clear that if $\Psi$ is injective, then so is the homomorphism $\Psi \circ \iota_r$. We prove the converse. For this, suppose that $\Psi \circ \iota_r$ is injective. Then $\Psi$ is injective on the subalgebra $M_r$ of $C_r^*(\GG;\EE)$. Let
\[
\XE \coloneqq \{ u \in \EEo : \EE_u^u = \IE_u \} \quad \text{ and } \quad \XG \coloneqq q(\XE) = \{ x \in \GGo : \GG_x^x = \IG_x \}.
\]
For each $u \in \XE$, let $\SS_u$ be the collection of pure states $\varphi$ of $M_r$ such that $\varphi \circ \iota_r$ is a pure state of $C_r^*(\IG;\IE)$ that factors through $C_r^*\big(\GG_{q(u)}^{q(u)}; \EE_u^u\big)$. Define $\SS \coloneqq \medcup_{u \in \XE} \, \SS_u$. By \cref{prop: unique extensions of states of M and M_r}\cref{item: unique extensions reduced}, each $\varphi \in \SS$ extends uniquely to a state $\overline{\varphi}$ of $C_r^*(\GG;\EE)$. For each $\varphi \in \SS$, let $\pi_{\overline{\varphi}}$ be the GNS representation of $C_r^*(\GG;\EE)$ associated to $\overline{\varphi}$. To see that $\Psi$ is injective on $C_r^*(\GG;\EE)$, it suffices by \cite[Theorem~3.2]{BNRSW2016}, to show that $\pi_\SS \coloneqq \bigoplus_{\varphi \in \SS} \pi_{\overline{\varphi}}$ is faithful on $C_r^*(\GG;\EE)$. For this, fix $a \in C_r^*(\GG;\EE)$ such that $\pi_\SS(a) = 0$. Then $\pi_{\overline{\varphi}}(a) = 0$ for every $\varphi \in \SS$. Let $\Phi_r\colon C_r^*(\GG;\EE) \to C_r^*\big(\GGo;q^{-1}(\GGo)\big)$ be the faithful conditional expectation extending restriction of functions. To see that $a = 0$, it suffices to show that $\Phi_r(a^*a) = 0$, because $\Phi_r$ is faithful. Suppose, for contradiction, that $\Phi_r(a^*a) \ne 0$. Then $\Phi_r(a^*a) > 0$, because $\Phi_r$ is positive. Let $\Xi_a \in C_0(\GGo)$ be the image of $\Phi_r(a^*a)$ under the isomorphism from $C_r^*\big(\GGo;q^{-1}(\GGo)\big)$ to $C_0(\GGo)$ given in \cite[Theorem~11.1.11]{Sims2020}. Then $\Xi_a > 0$, since $\Phi_r(a^*a) > 0$. Let $Y_a \coloneqq \Xi_a^{-1}\big((0,\infty)\big)$. Since $\Xi_a$ is continuous, $Y_a$ is an open subset of $\GGo$. Thus, since $\XG$ is dense in $\GGo$ by \cite[Lemma~3.3(a)]{BNRSW2016}, we have $Y_a \medcap \XG \ne \varnothing$. Choose $x \in Y_a \medcap \XG$, and let $u \coloneqq i(x,1)$. Then $u \in (q\restr{\EEo})^{-1}(\XG) = \XE$, and $\Phi_r(a^*a)(u) = \Xi_a(x) > 0$. Fix $\epsilon > 0$ such that
\begin{equation} \label{eqn: positive cond exp}
\Phi_r(a^*a)(u) > \epsilon.
\end{equation}
Since $\EE_u^u = \IE_u$ and $a^*a \ge 0$, we know by \cref{lemma: C*(GG;Sigma) is C*(II;Sigma)-compressible}\cref{item: compressible reduced} that there exist $b, c \in M_r$ such that $b, c \ge 0$,
\begin{equation} \label{eqn: compression of a^*a}
\lv b a^* a b - c \rv_r < \frac{\epsilon}{2},
\end{equation}
and $\varphi(b) = \lv b \rv_r = 1$ for every (not necessarily pure) state $\varphi$ of $M_r$ such that $\varphi \circ \iota_r$ factors through $C_r^*\big(\GG_{q(u)}^{q(u)}; \EE_u^u\big)$. Let $(e_\lambda)_{\lambda \in \Lambda}$ be an approximate identity for $C_r^*(\GG;\EE)$, and for each $\varphi \in \SS_u$, let $N_{\overline{\varphi}} \coloneqq \{ f \in C_r^*(\GG;\EE) : \overline{\varphi}(f^*f) = 0 \}$ be the null space for $\overline{\varphi}$. Since the GNS representation $\pi_{\overline{\varphi}}$ satisfies $\pi_{\overline{\varphi}}(a) = 0$, we have $\pi_{\overline{\varphi}}(ba^*ab) = 0$, and so by the GNS construction, we have
\begin{equation} \label{eqn: state with compression}
\overline{\varphi}(ba^*ab) = \lim_{\lambda \in \Lambda} \big( \pi_{\overline{\varphi}}(ba^*ab)(e_\lambda + N_{\overline{\varphi}}) \mid e_\lambda + N_{\overline{\varphi}} \big) = 0, \ \text{ for each } \varphi \in \SS_u.
\end{equation}
Together, \cref{eqn: compression of a^*a,eqn: state with compression} imply that for all $\varphi \in \SS_u$,
\begin{equation} \label{eqn: state almost kills element}
\lav \varphi(c) \rav = \lav \overline{\varphi}(c) \rav \le \left\lav \overline{\varphi}(c) - \overline{\varphi}(ba^*ab) \right\rav + \left\lav \overline{\varphi}(ba^*ab) \right\rav \le \lv c - b a^* a b \rv_r < \frac{\epsilon}{2}.
\end{equation}
Let $Q_u^r\colon C_r^*(\IG;\IE) \to C_r^*(\IG_{q(u)}; \IE_u) = C_r^*\big(\GG_{q(u)}^{q(u)}; \EE_u^u\big)$ be the surjective $*$-homomorphism of \cref{cor: quotient map on unit space algebra}\cref{item: reduced quotient map on unit space algebra}. Since $c$ is a positive element of $M_r$, $Q_u^r(\iota_r^{-1}(c))$ is a positive element of $C_r^*\big(\GG_{q(u)}^{q(u)}; \EE_u^u\big)$, and so by \cite[Proposition~II.6.3.3]{Blackadar2006}, there is a pure state $\phi$ of $C_r^*\big(\GG_{q(u)}^{q(u)}; \EE_u^u\big)$ satisfying
\begin{equation} \label{eqn: state for element to achieve norm}
\left\lav \phi\big(Q_u^r(\iota_r^{-1}(c))\big) \right\rav = \left\lv Q_u^r(\iota_r^{-1}(c)) \right\rv_r.
\end{equation}
Define $\psi\colon M_r \to \C$ by $\psi(\iota_r(h)) \coloneqq \phi(Q_u^r(h))$. By \cref{lemma: pure state composition}, $\psi$ is a pure state of $M_r$ and $\psi \circ \iota_r$ is a pure state of $C_r^*(\IG;\IE)$ that factors through $C_r^*\big(\GG_{q(u)}^{q(u)}; \EE_u^u\big)$. Hence $\psi \in \SS_u$, and so \cref{eqn: state for element to achieve norm,eqn: state almost kills element} imply that
\begin{equation} \label{eqn: quotient map almost kills element}
\left\lv Q_u^r(\iota_r^{-1}(c)) \right\rv_r = \left\lav \phi\big(Q_u^r(\iota_r^{-1}(c))\big) \right\rav = \lav \psi(c) \rav < \frac{\epsilon}{2}.
\end{equation}
Let $\ev_u$ be the evaluation map $f \mapsto f(u)$ on $C_r^*\big(\GGo;q^{-1}(\GGo)\big)$, and let $\rho_u \coloneqq \ev_u \circ\, (\Phi_r\restr{M_r})$. By parts~\cref{item: ev state factors through,item: ev state equality} of \cref{lemma: ev map circ cond exp}, $\rho_u$ is a state of $M_r$ and $\rho_u \circ \iota_r$ is a state of $C_r^*(\IG;\IE)$ that factors through $C_r^*\big(\GG_{q(u)}^{q(u)}; \EE_u^u\big)$. Hence there is a state $\kappa_u$ of $C_r^*\big(\GG_{q(u)}^{q(u)}; \EE_u^u\big)$ such that $\rho_u \circ \iota_r = \kappa_u \circ Q_u^r$. Thus, using \cref{eqn: quotient map almost kills element} for the final inequality, we obtain
\begin{equation} \label{eqn: evaluation of cond exp almost kills element}
\left\lav \rho_u(c) \right\rav = \left\lav \kappa_u\big(Q_u^r(\iota_r^{-1}(c))\big) \right\rav \le \left\lv Q_u^r(\iota_r^{-1}(c)) \right\rv_r < \frac{\epsilon}{2}.
\end{equation}
By \cref{lemma: ev map circ cond exp}\cref{item: ev unique state extension}, $\overline{\rho}_u \coloneqq \ev_u \circ\, \Phi_r$ is the unique state extension of $\rho_u$ to $C_r^*(\GG;\EE)$. By our choice of $b$, we have $\overline{\rho}_u(b) = \rho_u(b) = \lv b \rv_r = 1$. Thus, recalling the notation defined in \cref{notation: state sets}, we have $b \in \UU_{\overline{\rho}_u}$, and so \cref{rem: U_phi subseteq M_phi} implies that $b \in \MM_{\overline{\rho}_u}$. Therefore,
\begin{equation} \label{eqn: pulling elements out of evaluation of cond exp}
\overline{\rho}_u(ba^*ab) = \overline{\rho}_u(b) \, \overline{\rho}_u(a^*a) \, \overline{\rho}_u(b) = \overline{\rho}_u(a^*a).
\end{equation}
Using \cref{eqn: pulling elements out of evaluation of cond exp} for the second equality, we obtain
\begin{align*}
\Phi_r(a^*a)(u) = \left\lav \overline{\rho}_u(a^*a) \right\rav &= \left\lav \overline{\rho}_u(ba^*ab) \right\rav \\
&\le \left\lav \overline{\rho}_u(ba^*ab) - \overline{\rho}_u(c) \right\rav + \left\lav \overline{\rho}_u(c) \right\rav \le \lv b a^*a b - c \rv_r + \left\lav \rho_u(c) \right\rav\!. \numberthis \label{eqn: Phi_r(a*a) inequality}
\end{align*}
Together, \cref{eqn: Phi_r(a*a) inequality,eqn: compression of a^*a,eqn: evaluation of cond exp almost kills element} imply that
\[
\Phi_r(a^*a)(u) \le \lv b a^*a b - c \rv_r + \left\lav \rho_u(c) \right\rav < \frac{\epsilon}{2} + \frac{\epsilon}{2} = \epsilon,
\]
which contradicts the inequality~\labelcref{eqn: positive cond exp}. Thus, we deduce that $\Phi_r(a^*a) = 0$, which completes the proof.
\end{proof}

We conclude with a characterisation of simplicity of reduced twisted C*-algebras of effective Hausdorff \'etale groupoids. This result is presumably well known (for instance, if $\GG$ is minimal and effective, then \cite[Theorem~7.26]{KM2021} implies that $C_r^*(\GG;\EE)$ is simple), but we present a proof as an example of an application of \cref{thm: uniqueness theorem}.

Note that if the groupoid $\GG$ is not effective, then characterising simplicity of $C_r^*(\GG;\EE)$ in terms of $\GG$ and $\EE$ is a much harder problem; see \cite[Remark~8.3]{KPS2015TAMS}.

\begin{cor} \label{cor: simplicity}
Let $(\EE,i,q)$ be a twist over an effective Hausdorff \'etale groupoid $\GG$. Then $C_r^*(\GG;\EE)$ is simple if and only if $\GG$ is minimal.
\end{cor}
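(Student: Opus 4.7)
The approach is to apply the C*-uniqueness theorem (\cref{thm: uniqueness theorem}). Effectiveness of $\GG$ means $\IG = \GGo$, so $\iota_r$ identifies $C_r^*(\IG;\IE)$ with the diagonal subalgebra $D_r \cong C_0(\GGo)$ of $C_r^*(\GG;\EE)$. The corollary then reduces to translating (non)minimality of $\GG$ into the (non)existence of proper nontrivial ideals in $C_r^*(\GG;\EE)$.

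For the direction ``not minimal $\implies$ not simple,'' I would fix a nonempty proper open invariant subset $U \subseteq \GGo$. Then $\GG_U \coloneqq s^{-1}(U) = r^{-1}(U)$ is an open subgroupoid (by invariance), and $\EE_U \coloneqq q^{-1}(\GG_U)$ is a twist over it (\cref{prop: subgroupoid twists}). The argument of \cref{prop: injective homo} adapts to embed $C_r^*(\GG_U;\EE_U)$ isometrically into $C_r^*(\GG;\EE)$ via extension by zero; a short convolution computation using $q(\supp(ab)) \subseteq q(\supp a) \, q(\supp b)$ together with invariance of $U$ then shows the image is closed under multiplication by $\Sigma_c(\GG;\EE)$ on either side, hence is a closed two-sided ideal. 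It is nonzero because $U \ne \varnothing$ and proper because its intersection with $D_r$ corresponds to $C_0(U) \ne D_r$.

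For the main direction, suppose $\GG$ is minimal and let $I$ be a closed two-sided ideal of $C_r^*(\GG;\EE)$, with $I \cap D_r \cong C_0(V)$ for some open $V \subseteq \GGo$. The key step is to show $V$ is $\GG$-invariant. Given $\gamma \in \GG$ with $s(\gamma) \in V$, I would use \cref{lemma: CLS bisections and units} and Urysohn's lemma to pick an open bisection $B \ni \gamma$ with continuous section $P\colon B \to \EE$, an $f \in \Sigma_c(\GG;\EE)$ with $q(\supp f) \subseteq B$ and $f(P(\gamma)) \ne 0$, and an $h \in I \cap D_r$ corresponding to $\tilde h \in C_c(V)$ with $\tilde h(s(\gamma)) \ne 0$. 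Since $B$ is a bisection, $q(\supp(fhf^*)) \subseteq BB^{-1} \subseteq \GGo$, so $fhf^* \in D_r$; a direct convolution calculation in which the $\T$-phases of $f$ and $f^*$ cancel via $\T$-equivariance shows that $fhf^*$ corresponds in $C_0(\GGo)$ to the function $u \mapsto \lav f(P(\gamma^B_u)) \rav^2 \, \tilde h(s(\gamma^B_u))$ on $r(B)$ (and zero off $r(B)$), where $\gamma^B_u$ is the unique element of $B$ with range $u$. Evaluating at $u = r(\gamma)$ gives $\lav f(P(\gamma)) \rav^2 \, \tilde h(s(\gamma)) \ne 0$, so $r(\gamma) \in V$. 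The symmetric computation with $f^* h f$ yields the reverse implication, establishing invariance of $V$.

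By minimality, $V = \varnothing$ or $V = \GGo$. If $V = \varnothing$, the quotient $\pi\colon C_r^*(\GG;\EE) \to C_r^*(\GG;\EE)/I$ is injective on $\iota_r(C_r^*(\IG;\IE)) = D_r$, so $\pi \circ \iota_r$ is injective and \cref{thm: uniqueness theorem} gives $I = 0$. If $V = \GGo$, then $I \supseteq D_r$, which contains an approximate identity for $C_r^*(\GG;\EE)$, so $I = C_r^*(\GG;\EE)$. Hence $C_r^*(\GG;\EE)$ is simple. The main technical obstacle will be the convolution computation for $fhf^*$ in the twisted setting: because $\EE$ is not \'etale, the computation must be carried out via the local section $P$, carefully tracking the $\T$-phases at the two factors and verifying that they conspire to produce a real, $\T$-equivariant function on $q^{-1}(\GGo)$ depending only on $\lav f \circ P \rav^2$ and $\tilde h$.
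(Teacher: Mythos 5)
Your proposal is correct and follows essentially the same route as the paper: both directions hinge on identifying $\iota_r\big(C_r^*(\IG;\IE)\big)$ with $D_r \cong C_0(\GGo)$, showing via a conjugation computation over a bisection that the open set corresponding to $I \cap D_r$ is invariant, invoking minimality, and finishing with \cref{thm: uniqueness theorem} together with the approximate identity in $D_r$. The only cosmetic difference is in the non-minimality direction, where you build the ideal generated by an open invariant set by extension by zero while the paper takes the kernel of the restriction map to the closed complement; these are dual formulations of the same argument (and your properness claim $I_U \cap D_r \cong C_0(U)$ is most cleanly justified by applying the conditional expectation $\Phi_r$, or by noting $I_U \subseteq \ker(\res{K})$ for $K = \GGo \setminus U$, which is the paper's map).
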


\begin{proof}
Suppose that $\GG$ is not minimal. Then there exists a nonempty proper closed invariant subset $K$ of $\GGo$. Define $\GK \coloneqq s^{-1}(K)$. Since $K$ is closed and invariant, $\GK$ is a closed \'etale subgroupoid of $\GG$ with unit space $\GK^{(0)} = K$. Hence \cref{lemma: subgroupoid twists} implies that $q^{-1}(\GK)$ is a twist over $\GK$. The restriction map $f \mapsto f\restr{q^{-1}(\GK)}$ is a $*$\nobreakdash-homomorphism from $\Sigma_c(\GG;\EE)$ to $\Sigma_c\big(\GK;q^{-1}(\GK)\big)$, and so it extends to a C*-homomorphism $\res{K}\colon C_r^*(\GG;\EE) \to C_r^*\big(\GK;q^{-1}(\GK)\big)$. Since $\GK$ is nonempty, $\res{K}$ is not the zero map, and since $\GK \ne \GG$, $\res{K}$ is not injective. Hence $\ker(\res{K})$ is a nonzero proper ideal of $C_r^*(\GG;\EE)$, and so $C_r^*(\GG;\EE)$ is not simple.

For the converse, suppose that $\GG$ is minimal. Let $D_r$ denote the completion of the set $D_0 \coloneqq \left\{ f \in \Sigma_c(\GG;\EE) : q(\supp(f)) \subseteq \GGo \right\}$ with respect to the reduced norm. Since $\GG$ is effective, we have $\IG = \GGo$ and $\IE = q^{-1}(\GGo)$. Let $\iota_r\colon C_r^*\big(\GGo;q^{-1}(\GGo)\big) \to C_r^*(\GG;\EE)$ be the injective homomorphism of \cref{prop: injective homo}. Then $\iota_r\big(C_r^*\big(\GGo;q^{-1}(\GGo)\big)\big) = D_r$.

Let $I$ be a nonzero ideal of $C_r^*(\GG;\EE)$. Then there is a C*-homomorphism $\Psi$ of $C_r^*(\GG;\EE)$ such that $I = \ker(\Psi)$. Since $I$ is nonzero, $\Psi$ is not injective, and hence \cref{thm: uniqueness theorem} implies that $\Psi \circ \iota_r$ is not injective either. Thus $J \coloneqq \ker(\Psi \circ \iota_r)$ is a nonzero ideal of $C_r^*\big(\GGo;q^{-1}(\GGo)\big)$, and we have $\iota_r(J) = I \cap D_r$. To see that $C_r^*(\GG;\EE)$ is simple, we must show that $I = C_r^*(\GG;\EE)$. We know by \cite[Theorem~5.2]{Renault2008} that $D_r$ contains an approximate identity for $C_r^*(\GG;\EE)$ (see also, \cite[Proposition~11.1.14]{Sims2020}), and so it suffices to show that $\iota_r(J) = D_r$, because then $D_r \subseteq I$, and it follows that $I = C_r^*(\GG;\EE)$.

Recall from \cite[Theorem~11.1.11]{Sims2020} that there is an isomorphism $\Upsilon\colon C_0(\GGo) \to D_r$ such that $\Upsilon(f)(i(x,z)) = z \, f(x)$ for all $f \in C_0(\GGo)$ and $(x,z) \in \GGo \times \T$. Define
\[
F \coloneqq \{ x \in \GGo : f(x) = 0 \text{ for all } f \in \Upsilon^{-1}(\iota_r(J)) \}.
\]
Since $\Upsilon^{-1}(\iota_r(J))$ is a nonzero ideal of $C_0(\GGo)$, $F$ is a proper closed subset of $\GGo$, and
\[
\Upsilon^{-1}(\iota_r(J)) = \{ f \in C_0(\GGo) : f(x) = 0 \text{ for all } x \in F \}.
\]
To see that $\iota_r(J) = D_r$, we will prove the equivalent statement that $\Upsilon^{-1}(\iota_r(J)) = C_0(\GGo)$. Suppose that $F \ne \varnothing$. We will derive a contradiction by showing that $F = \GGo$. Since $\GG$ is minimal, the only closed invariant subsets of $\GGo$ are $\varnothing$ and $\GGo$ itself, and so it suffices to show that $F$ is invariant. For this, fix $x \in F$, and suppose that $\gamma \in \GG$ satisfies $s(\gamma) = x$. We must show that $r(\gamma) \in F$. For this, fix $f \in \Upsilon^{-1}(\iota_r(J))$. We must show that $f(r(\gamma)) = 0$. Use \cref{lemma: CLS bisections and units} to choose a local trivialisation $(B_\alpha, P_\alpha, \phi_{P_\alpha})_{\alpha \in \GG}$ of $\EE$ such that each $B_\alpha$ is a bisection of $\GG$. Use Urysohn's lemma to choose $h \in C_c(\GG)$ such that $\supp(h) \subseteq B_\gamma$ and $h(\gamma) = 1$. Recall from \cref{lemma: T-action properties}\cref{item: t_alpha cts} that for each $\varepsilon \in q^{-1}(B_\gamma)$, there is a unique $z_\varepsilon \in \T$ such that $\varepsilon = \phi_{P_\gamma}(q(\varepsilon),z_\varepsilon)$, and the map $\varepsilon \mapsto z_\varepsilon$ is continuous on $q^{-1}(B_\gamma)$. Thus $\varepsilon \mapsto z_\varepsilon \, h(q(\varepsilon))$ is a continuous map from $q^{-1}(B_\gamma)$ to $\C$. Define $g\colon \EE \to \C$ by
\[
g(\varepsilon) \coloneqq \begin{cases}
z_\varepsilon \, h(q(\varepsilon)) & \ \text{if } \varepsilon \in q^{-1}(B_\gamma) \\
0 & \ \text{if } \varepsilon \notin q^{-1}(B_\gamma).
\end{cases}
\]
Since $P_\gamma(\gamma) = \phi_{P_\gamma}(\gamma,1)$, we have $g(P_\gamma(\gamma)) = h(\gamma) = 1$. By a similar argument to the one used in the proof of \cref{lemma: j_varepsilon} to show that $j_\varepsilon \in \Sigma_c(\IG;\IE)$, we see that $g \in \Sigma_c(\GG;\EE)$. Since $\supp(g) \subseteq q^{-1}(B_\gamma)$ and $\supp(\Upsilon(f)) \subseteq q^{-1}(\GGo)$, it follows from \cref{eqn: convolution as a finite sum} and the fact that $B_\gamma$ is a bisection that
\[
q\big(\!\supp\!\big(g^* \, \Upsilon(f) \, g\big)\big) \subseteq B_\gamma^{-1} \, \GGo \, B_\gamma = s(B_\gamma) \subseteq \GGo,
\]
and thus $g^* \, \Upsilon(f) \, g \in D_r$. Since $\Upsilon(f)$ is an element of the ideal $I$, it follows that $g^* \, \Upsilon(f) \, g \in I \cap D_r = \iota_r(J)$. Therefore, since $x \in F$, we have
\begin{equation} \label{eqn: conjugation gives 0}
\big(g^* \, \Upsilon(f) \, g\big)(i(x,1)) = \Upsilon^{-1}\big(g^* \, \Upsilon(f) \, g\big)(x) = 0.
\end{equation}
Observe that $i(x,1) = s\big(P_\gamma(\gamma)\big) = P_\gamma(\gamma)^{-1} \, i(r(\gamma),1) \, P_\gamma(\gamma)$. Thus, since $q(\supp(g))$ is contained in the bisection $B_\gamma$, \cref{eqn: convolution as a finite sum} implies that
\begin{equation} \label{eqn: conjugation simplification}
\big(g^* \, \Upsilon(f) \, g\big)(i(x,1)) = g^*\big(P_\gamma(\gamma)^{-1}\big) \, \Upsilon(f)\big(i(r(\gamma),1)\big) \, g\big(P_\gamma(\gamma)\big) = f(r(\gamma)).
\end{equation}
Together, \cref{eqn: conjugation gives 0,eqn: conjugation simplification} imply that $f(r(\gamma)) = 0$, and so $F$ is invariant. Thus $F = \GGo$, which is a contradiction, because $\Upsilon^{-1}(\iota_r(J))$ is a nonzero ideal of $C_0(\GGo)$. Therefore, we must have $F = \varnothing$, and so $\Upsilon^{-1}(\iota_r(J)) = C_0(\GGo)$, as required.
\end{proof}

\vspace{2ex}
\raggedbottom
\bibliographystyle{amsplain}
\makeatletter\renewcommand\@biblabel[1]{[#1]}\makeatother
\bibliography{Armstrong1_references}
\vspace{4ex}

\end{document}